\numberwithin{equation}{section} 
\newtheorem{thm}{Theorem}[section]
\newtheorem{corollary}[thm]{Corollary}
\newtheorem{prop}[thm]{Proposition}
\newtheorem{lem}[thm]{Lemma} 
\theoremstyle{definition} 
\newtheorem{defn}[thm]{Definition}
\newtheorem{remark}[thm]{Remark}
\newtheorem{assumption}[thm]{Assumption} 
\def\R{\mathbb{R}}
\def\P{\mathbb{P}}
\def\E{\mathbb{E}}
\def\CF{\mathcal{F}}
\def\Var{\mathrm{Var}}
\def\Cov{\mathrm{Cov}}
\def\Pto{\stackrel{P}{\rightarrow}}
\def\dto{\stackrel{D}{\rightarrow}}
\def\1{\mathbbm{1}}
\def\Aut{\mathrm{Aut}}
\begin{document}

\title[ Subgraph Counts in Sparse Inhomogeneous Random Graphs ]{ Asymptotic Normality of Subgraph Counts in Sparse Inhomogeneous Random Graphs }

\author[Chatterjee]{Sayak Chatterjee} 
\address{Department of Statistics and Data Science, University of Pennsylvania, Philadelphia, USA} \email{sayakc@wharton.upenn.edu}

\author[Chatterjee]{Anirban Chatterjee} 
\address{Department of Statistics, University of Chicago, USA} \email{anirbanc@uchicago.edu}

\author[Chakraborty]{Abhinav Chakraborty}
\address{Department of Statistics, Columbia University, New York, USA} 
\email{ac4662@columbia.edu}

\author[Bhattacharya]{Bhaswar B. Bhattacharya} 
\address{Department of Statistics and Data Science, University of Pennsylvania, Philadelphia, USA} \email{bhaswar@wharton.upenn.edu}

\begin{abstract}
In this paper, we derive the asymptotic distribution of the number of copies of a fixed graph $H$ in a random graph $G_n$ sampled from a sparse graphon model. Specifically, we provide a refined analysis that separates the contributions of edge randomness and vertex-label randomness, allowing us to identify distinct sparsity regimes in which each component dominates or both contribute jointly to the fluctuations. As a result, we establish asymptotic normality for the count of any fixed graph $H$ in $G_n$ across the entire range of sparsity (above the containment threshold for $H$ in $G_n$). These results provide a complete description of subgraph count fluctuations in sparse inhomogeneous networks, closing several gaps in the existing literature that were limited to specific motifs or suboptimal sparsity assumptions. 
\end{abstract}

\maketitle

\section{Introduction} \label{sec:intro}

A \textit{graphon} is a symmetric measurable function $W : [0,1]^2 \to [0,1]$, that is, $W(x,y) = W(y,x)$ for all $x,y \in [0,1]$. Graphons arise as limit objects of sequences of large graphs and have attracted significant attention in recent years. They form a powerful bridge between combinatorics and analysis and have found applications across a range of disciplines, including statistical physics, probability, and statistics; see, for example, \cite{borgs2008convergent,borgs2012convergent,bmpotts,chatterjee2013estimating,chatterjee2011large}. For a comprehensive exposition of the theory of graph limits, we refer to \cite{lovasz2012large}. Graphons also provide a natural probabilistic model for generating inhomogeneous random graphs, which generalize the classical (homogeneous) Erd\H{o}s--R\'enyi random graph, where edge probabilities vary across vertex pairs according to the underlying graphon 
(see
\cite{bollobas2007phase,diaconis1981statistics,lovasz2006limits,boguna2003class} among others).

Random graphs generated from a graphon model produces dense graphs, which have $O(n^2)$ edges with high probability. In contrast, many real-world networks, arising from social, biological, and technological applications, are observed to be sparse. To model such networks, the \emph{sparse graphon model} has been proposed, which introduces a parameter $\rho_n$ to control the edge density~(see \cite{bickel2011method,bickel2009nonparametric,klopp2017oracle,lunde2023subsampling} references therein):

\begin{defn}[Sparse graphon model] Given a sparsity parameter $\rho_n \in (0, 1)$ and a graphon $W$ with $\int_{[0, 1]^2} W(x, y) \mathrm d x \mathrm dy > 0$, the {\it $W$-random graph with sparsity $\rho_n$} on the vertex set $[n] := \{1, 2, \ldots, n\}$, hereafter denoted by $G(n, \rho_n, W)$, is obtained by connecting the vertices $i$ and $j$ with probability $\rho_n W(U_i, U_j)$, independently for all $1\leq i < j \leq n$, where $\{U_{i}: 1 \leq i \leq n\}$ is a sequence of i.i.d. $U([0, 1])$ random variables. 
\label{defn:W} 
\end{defn}

The model in Definition \ref{defn:W} encompasses many well-known network models, including the classical Erd\H{o}s-R\'{e}nyi random graph model (where $W$ is a constant function), the stochastic block model \citep{bickel2009nonparametric,holland1983stochastic} (where 
$W$ is a block function), latent space models \citep{hoff2002latent}, random dot-product graphs \citep{dotproduct2018statistical,rubin2022statistical}, and random geometric graphs \citep{penrose2003}, among others. Over the years, the sparse graphon model has emerged as a fundamental tool in modern network analysis, with applications in community detection, subgraph count statistics, and nonparametric estimation of graph parameters, among others.

Among the basic features of a network are its subgraph (motif) counts, that is, the frequencies of particular patterns (subgraphs), such as the number or density of edges, triangles, or stars within the network. 
These encode important structural information about the geometry of a network, and many features of practical interest can be derived from motif counts, such as the clustering coefficient \citep{watts1998collective}, degree distribution \citep{networkdegree}, and transitivity \citep{holland1971transitivity} (see \cite{rubinov2010complex} for others).  In the framework of the graphon model, subgraph counts are often referred to as network moments \citep{bickel2011method,borgs2010moments}, which provide an important tool for inferring properties of the underlying graphon  (see \cite{subsamplingnetwork,lin2020bootstrap,shao2022higher,green2022bootstrapping,lunde2023subsampling,maugis2020testing} and references therein). Consequently, understanding the asymptotic properties of subgraph counts in $W$-random graphs is a problem of central importance in statistical network analysis. To this end, given a fixed graph $H=(V(H), E(H))$ denote by $X_{n}(H, G_n)$ the number of copies of $H$ in the $W$-random graph $G_n\sim G(n, \rho_n, W)$. More formally,  
\begin{align}\label{eq:XHW}
X_{n}(H, G_n)=\sum_{1\leq i_{1}<\cdots<i_{|V(H)|}\leq n}\sum_{H'\in
  \mathscr{G}_H(\{i_{1},\ldots, i_{|V(H)|} \}) } 
\prod_{(i_{s}, i_{t}) \in  E(H')} a_{i_{s}i_{t}} , 
\end{align}
where, $a_{ij} = 1$, if $(i, j)$ is an edge in $G_n$ and 0 otherwise, for $1 \leq i < j \leq n$, and, for any set $S \subseteq [n]$, $\mathscr G_H(S)$ denotes the
collection of all subgraphs of the complete graph $K_{|S|}$ on the vertex
set $S$ which are isomorphic to $H$.\footnote{Note that we count unlabelled copies of $H$. Several other papers count labelled copies, where $X_n(H, G_n)$ is  multiplied by $|\text{Aut}(H)|$.} 
Note that
\begin{equation}
    | \mathscr{G}_H(\{1,\ldots, |V(H)| \}) | = \dfrac{|V(H)|!}{|\text{Aut}(H)|}, 
    \label{eq:G_H}
\end{equation}
where $\text{Aut}(H)$ is the set of all automorphisms of the graph $H$, that is, the collection of permutations of the vertex set $V (H)$ such that $(x, y) \in E(H)$ if and only if $(\sigma(x), \sigma(y)) \in E(H)$. Hence, 
\begin{align}\label{eq:EXHW}
\mathbb E[X_n(H, G_n)] & =\sum_{1\leq i_{1}<\cdots<i_{|V(H)|}\leq n}\sum_{H'\in \mathscr{G}_H(\{i_{1},\ldots, i_{|V(H)|} \}) }\E\left[ \prod_{(i_{s}, i_{t}) \in E(H')} [\rho_nW(U_{i_{s}}U_{i_{t}}) ]\right] \nonumber \\ 
&= \dfrac{(n)_{|V(H)|}}{|\text{Aut}(H)|} \rho_n^{|E(H)|} t(H,W) , 
\end{align}
where $(n)_{|V(H)|}:=n(n-1)\cdots (n-|V(H)|+1)$ and 
\begin{align}\label{eq:tHW}
t(H,W)=\int_{[0,1]^{|V(H)|}}\prod_{(a, b) \in E(H)}W(x_{a},x_{b})\prod_{a=1}^{|V(H)|}\mathrm{d} x_{a}  
\end{align}
is the {\it homomorphism density} of the graph $H$ in the graphon $W$.

Classical results of \citet{rucinski1988small,barbour1989central,nowicki1989asymptotic,janson1990functional} provide a complete characterization of the asymptotic normality of $X_n(H, G_n)$ (after appropriate centering and scaling) when $G_n \sim G(n, \rho_n)$ is the (homogeneous) Erd\H{o}s-R\'{e}nyi model (see \citet[Chapter 6]{JLR} for comprehensive treatment). For $G_n \sim G(n, \rho_n, W)$ sampled from the sparse random graphon model, the fluctuation of $X_n(H, G_n)$ was first studied in the seminal paper of \citet{bickel2011method}. This result has been refined and extended in many directions (see the discussion  Section~\ref{sec:Wsparse}). However, existing results remain incomplete: they apply only to specific types of subgraphs and often do not cover the full sparsity regime. Our objective in this paper is to fill these missing gaps. Specifically, we establish the asymptotic normality of $Z_n(H, G_n),$ the standardized version of $X_n(H,G_n),$ for all fixed graphs $H$ in the entire allowable sparsity regime (that is, whenever $\rho_n$ is above the containment threshold of $H$ in $G(n, \rho_n, W)$). In fact, we provide a more fine-grained analysis that decouples the randomness of the vertex labels from the randomness of the edges and identifies distinct regimes of sparsity where the two components contribute, either separately or simultaneously, to the fluctuations of $Z_n(H, G_n)$. This relies on decomposing the centered subgraph count as follows: 
\begin{align}
    \Delta(H, G_n) & := X_n(H, G_n) - \mathbb{E}[X_n(H, G_n)] \notag \\
    &= \big(X_n(H, G_n) - \mathbb{E}[X_n(H, G_n) \mid \mathcal{F}(\bm{U}_n) ] \big) + \big(\mathbb{E}[X_n(H, G_n) \mid \mathcal{F}(\bm{U}_n) ] - \mathbb{E}[X_n(H, G_n)] \big) \notag \\
    &=: \Delta_1(H, G_n) + \Delta_2(H, G_n) , 
    \label{eq:XHGn12}
\end{align} 
where $\mathcal{F}(\bm{U}_n)$ is the sigma-algebra generated by the collection $\bm U_n= \{U_1, U_2, \ldots, U_n\}$. Note that, conditioned on $\bm U_n$, the randomness in the first term $\Delta_1(H, G_n)$ arises solely from the edges of $G_n$. On the other hand, $\Delta_2(H, G_n)$ depends only on vertex labels $\{U_1, U_2, \ldots, U_n\}$, that is, it is $\mathcal{F}(\bm{U}_n)$-measurable. 
Contributions of $\Delta_1(H, G_n)$ and $\Delta_2(H, G_n)$ to the total variance of $X_n(H, G_n)$ depend on the regime of sparsity and also on a notion of $H$-regularity of the graphon $W$ (see Definition \ref{regular}). Specifically, we obtain the following results:  

\begin{itemize}

\item If $W$ is $H$-regular, the contribution of $\Delta_2(H, G_n)$ to the variance of $X_n(H, G_n)$ is asymptotically negligible, and the limiting distribution of $X_n(H, G_n)$ is governed solely by $\Delta_1(H, G_n)$ (see Theorem \ref{thm:regular}).

\item If $W$ is not $H$-regular, the asymptotic behaviors of $\Delta_1(H, G_n)$ and $\Delta_2(H, G_n)$ exhibits a phase transition. Below a critical threshold, $\Delta_2(H, G_n)$ contributes negligibly to the variance of $X_n(H, G_n)$, and the fluctuations are driven entirely by $\Delta_1(H, G_n)$. Above the threshold, the roles reverse, with $\Delta_1(H, G_n)$ becoming negligible and $\Delta_2(H, G_n)$ governing the fluctuations. At the threshold, both terms contribute, and the appropriately standardized pair $(\Delta_1(H, G_n), \Delta_2(H, G_n))$ converges jointly to independent normal distributions with nontrivial variances (see Theorem \ref{thm:irregular}).

\end{itemize}
The above results combined show that for any subgraph $H= (V(H), E(H))$, with $|E(H)| \geq 1$, $Z_n(H, G_n)$ is asymptotically normal whenever $\rho_n$ is above the containment threshold of $H$ in $G(n, W)$ (see Corollary \ref{cor:ZHGn}). This completes the program, initiated in \citet{bickel2011method}, of deriving the asymptotic distribution of subgraph counts (network moments) in sparse graphon models in full generality. In particular, it also answers a question posed by \citet{hladky2021limit} about the distribution of the number of cliques. As a byproduct of our analysis we obtain a conditional central limit theorem  for $\Delta_1(H, G_n)$ (in the Wasserstein distance) that might be of independent interest (see Proposition \ref{prop:main}).

\subsection*{Asymptotic Notations}

Throughout the paper we will use the following asymptotic notations: For two sequences $a_n$ and $b_n$ we will write $a_n \lesssim b_n$ and $a_n = O(b_n)$ if for all $n$ large enough $a_n \leq C _1 b_n$, for some constant $C_1 > 0$. Similarly, $a_n \gtrsim b_n$ will mean  $a_n \geq C_2 b_n.$ Also, $a_n \asymp b_n$ and $a_n=\Theta(b_n)$ will mean $C_2 b_n \leq a_n \leq C_1 b_n$, for $n$ large enough and constants $C_1, C_2 > 0$. Subscripts in the above notation, for example $\lesssim_{\square}$, $\gtrsim_{\square}$, $O_\square$, and $\asymp_{\square}$ denote that the hidden constants may depend on the subscripted parameters. Moreover, $a_n \ll b_n$, $a_n\gg b_n$, and $a_n \sim b_n$ will mean $a_n/b_n \rightarrow 0$, $a_n/b_n \rightarrow \infty$, $a_n/b_n \rightarrow 1$, as $n \rightarrow \infty$.

\section{Fluctuations of Subgraph Counts in Sparse $W$-Random Graphs}

In this section we will state our main result on the asymptotic distribution $Z_n(H, G_n)$, where $G_n \sim G(n, \rho_n, W)$ is a $W$-random graph with sparsity $\rho_n$. 


\subsection{Preliminaries}

The preliminary step towards deriving the asymptotic distribution of $Z_n(H, G_n)$ is to figure out the rate at which $\rho_n$ has to diverge such that $G_n \sim G(n, \rho_n, W)$ contains at least one (or infinitely many) copies of $H$. For the classical Erd\H{o}s-R\'{e}nyi model this is standard fare in the random graphs literature (see \citet[Chapter 3]{JLR}). In particular, it depends on whether or not the graph $H$ is balanced: 


\begin{defn}[Balanced graph]\cite[Equation (3.6)]{JLR}
    For a fixed graph $H=(V(H), E(H))$, define
    \begin{align}\label{eq:mH}
    m(H):=\max_{F\subseteq H,\,|V(F)|\ge1}\frac{|E(F)|}{|V(F)|} , 
    \end{align}
    where the maximum is taken over all possible non-empty subgraphs of $H.$ If the maximum is attained by $H$ itself, that is, $m(H)=\frac{|E(H)|}{|V(H)|}$, then $H$ said to be a {\it balanced} graph. Otherwise, $H$ said to be an {\it unbalanced} graph. Moreover, if the maximum in \eqref{eq:mH} is uniquely attained by $H$, then $H$ is called a {\it strictly balanced} graph.
    \label{balanced}
\end{defn} 

Examples of strictly balanced graphs include trees, cycles, and complete graphs. Figure \ref{fig:mH} shows examples for the other cases: 

\begin{itemize} 

\item Figure \ref{fig:mH} (a) is an example of a graph $H$ which is balanced, but not strictly balanced. Here, \eqref{eq:mH} is attained both at the full graph $H$ as well as the subgraph $F$ marked by the red dotted circle. 
 
\item Figure \ref{fig:mH} (b) is an example of an unbalanced graph $H$. Here, for the subgraph $F$ marked by the red dotted circle, $\frac{|E(F)|}{|V(F)|}=\frac{5}{4}> \frac{6}{5} = \frac{|E(H)|}{|V(H)|}$.  


\end{itemize}

\begin{figure}
    \centering
    \begin{subfigure}[b]{6.5cm}
    \centering
    \begin{tikzpicture}[scale=0.75]
    \draw[black,thick] (0,0) -- (-1.732,1);
    \draw[black,thick] (0,0) -- (-1.732,-1);
    \draw[black,thick] (-1.732,1) -- (-1.732,-1);
    \draw[black,thick] (0,0) -- (2,0);
    \filldraw[blue] (0,0) circle (2pt) node {};
    \filldraw[blue] (-1.732,-1) circle (2pt) node {};
    \filldraw[blue] (-1.732,1) circle (2pt) node {};
    \filldraw[blue] (2,0) circle (2pt) node {};
    \draw[red,thick,dashed] (-1,0) circle (1.7cm);
    \draw (-3,-0.5) node[anchor=north] {$F$};
    \end{tikzpicture} \\ 
    (a) 
    \end{subfigure}%
    \begin{subfigure}[b]{6.5cm}
    \centering
    \begin{tikzpicture}[scale=0.9]
    \draw[black,thick] (0,0) -- (-1,1);
    \draw[black,thick] (0,0) -- (-1,-1);
    \draw[black,thick] (-1,1) -- (-1,-1);
    \draw[black,thick] (-1,1) -- (-2,0);
    \draw[black,thick] (-1,-1) -- (-2,0);
    \draw[black,thick] (0,0) -- (1.414,0);
    \filldraw[blue] (0,0) circle (2pt) node {};
    \filldraw[blue] (-1,-1) circle (2pt) node {};
    \filldraw[blue] (-1,1) circle (2pt) node {};
    \filldraw[blue] (-2,0) circle (2pt) node {};
    \filldraw[blue] (1.414,0) circle (2pt) node {};
    \draw[red,thick,dashed] (-1,0) circle (1.4cm);
    \draw (-2.6,-0.5) node[anchor=north] {$F$};
    \end{tikzpicture} \\ 
    (b)
    \end{subfigure} 
    \caption{ \small{(a) A graph which is balanced, but not strictly balanced; (b) an unbalanced graph. } } 
        \label{fig:mH}
\end{figure}
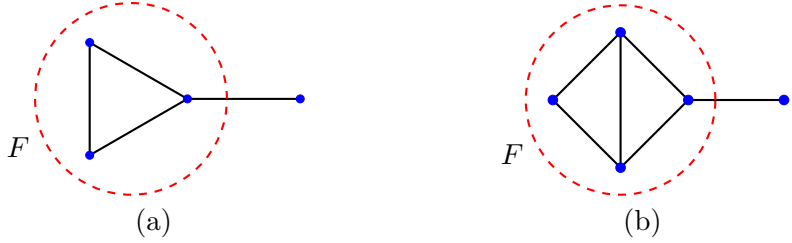

It is well-known from classical results \citep{erdos1960evolution,bollobas1981threshold} that for an Erd\H{o}s-R\'{e}nyi graph random $G(n, \rho_n)$, the threshold for the appearance of a fixed graph $H= (V(H), E(H))$ is determined by the condition $n \rho_n^{m(H)} \asymp 1$ (see \citet[Theorem 3.4]{JLR}). The proof of this result can be easily adapted to show that the threshold remains the same for the inhomogeneous random graph model $G(n, \rho_n, W)$ as well. We summarize this result in the following proposition. A proof is included in Section \ref{sec:thresholdpf} for the sake of completeness.

\begin{prop}
\label{prop:threshold} 
Fix a graph $H = (V(H), E(H))$ with $|E(H)| \geq 1$ and a graphon $W$ such that $t(H, W) > 0$. Then for $G_n \sim G(n, \rho_n, W)$ the following holds: 
    \[
   X_n(H,G_n) \Pto \left\{\begin{array}{ll}
        0  &\text{if } n \rho_n^{m(H)} \ll  1 , \\
        \infty & \text{if } n \rho_n^{m(H)} \gg 1 . 
        \end{array}\right.
  \] 
\end{prop}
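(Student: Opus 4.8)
The plan is to prove the two cases by the first- and second-moment methods respectively, following the classical Erd\H{o}s--R\'enyi arguments (see \cite[Chapter 3]{JLR}) but keeping track of the extra correlations between copies of $H$ that are created by the shared latent variables $U_i$.

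\textit{The subcritical case $n\rho_n^{m(H)} \ll 1$.} Fix a nonempty subgraph $F \subseteq H$ attaining the maximum in \eqref{eq:mH}, so that $|E(F)|/|V(F)| = m(H)$; since $|E(H)| \geq 1$ we have $m(H) > 0$ and hence $|E(F)| \geq 1$. Every copy of $H$ in $G_n$ contains a copy of $F$, so $\{X(H, G_n) \geq 1\} \subseteq \{X(F, G_n) \geq 1\}$, and Markov's inequality gives $\P(X(H, G_n) \geq 1) \leq \E[X(F, G_n)]$. By \eqref{eq:EXHW} and $t(F, W) \leq 1$ (as $W \leq 1$), $\E[X(F, G_n)] \lesssim_H n^{|V(F)|} \rho_n^{|E(F)|} = (n \rho_n^{m(H)})^{|V(F)|} \to 0$. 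Since $X(H, G_n)$ is $\Z_{\geq 0}$-valued, $X(H, G_n) \Pto 0$.

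\textit{The supercritical case $n\rho_n^{m(H)} \gg 1$.} I would show that $\E[X(H, G_n)] \to \infty$ and $\Var(X(H, G_n)) = o(\E[X(H, G_n)]^2)$; by Chebyshev's inequality this gives $X(H, G_n)/\E[X(H, G_n)] \Pto 1$, and as $\E[X(H, G_n)] \to \infty$ it follows that $X(H, G_n) \Pto \infty$. The first claim is immediate from \eqref{eq:EXHW} and $t(H, W) > 0$: since $|E(H)|/|V(H)| \leq m(H)$ and $\rho_n \leq 1$, we get $\E[X(H, G_n)] \asymp_{H, W} n^{|V(H)|} \rho_n^{|E(H)|} = (n \rho_n^{|E(H)|/|V(H)|})^{|V(H)|} \geq (n \rho_n^{m(H)})^{|V(H)|} \to \infty$. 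For the second, expand $X(H, G_n) = \sum_{H'} \prod_{e \in E(H')} a_e$ over the copies $H'$ of $H$ in $K_n$, so that $\Var(X(H, G_n))$ is the sum over ordered pairs of copies $(H_1, H_2)$ of the covariances $\Cov(\prod_{e \in E(H_1)} a_e, \prod_{e \in E(H_2)} a_e)$. Conditionally on $\bm U_n$ the edges are independent, so vertex-disjoint copies are independent and contribute zero. For a pair sharing $s \geq 1$ vertices and $j \geq 0$ edges, a direct computation (bounding the relevant homomorphism densities by $1$) shows that both $\E[\prod_{e \in E(H_1)} a_e \prod_{e \in E(H_2)} a_e]$ and $\E[\prod_{e \in E(H_1)} a_e]\,\E[\prod_{e \in E(H_2)} a_e]$ are $\lesssim \rho_n^{2|E(H)| - j}$, and hence so is the covariance; moreover the number of such ordered pairs is $\lesssim_H n^{2|V(H)| - s}$. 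Therefore
\[
\frac{\Var(X(H, G_n))}{\E[X(H, G_n)]^2} \lesssim_{H, W} \sum_{s \geq 1} \sum_{j \geq 0} \frac{n^{2|V(H)| - s}\, \rho_n^{2|E(H)| - j}}{n^{2|V(H)|}\, \rho_n^{2|E(H)|}} = \sum_{s \geq 1} \sum_{j \geq 0} \frac{1}{n^s \rho_n^j},
\]
a sum with only boundedly many nonzero terms. For $j = 0$ the term is $n^{-s} \to 0$ (as $s \geq 1$); for $j \geq 1$ the shared edges span a subgraph $F \subseteq H$ with $|V(F)| \leq s$, $|E(F)| = j$, so $n^s \rho_n^j \geq n^{|V(F)|} \rho_n^{|E(F)|} = (n \rho_n^{|E(F)|/|V(F)|})^{|V(F)|} \geq (n \rho_n^{m(H)})^{|V(F)|} \to \infty$. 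Hence every term is $o(1)$, giving $\Var(X(H, G_n)) = o(\E[X(H, G_n)]^2)$.

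\textit{Main obstacle.} The only genuine departure from the classical Erd\H{o}s--R\'enyi computation is the variance bookkeeping: in $G(n, \rho_n, W)$ two copies of $H$ sharing vertices but no edge are still correlated through the common labels $U_i$, unlike in $G(n, \rho_n)$. This is harmless since such a covariance has the same order (up to a $W$-dependent constant) as the product of the individual expectations, so it contributes only to the $j = 0$ part of the sum above and is dominated by $n^{-s}$. The remaining steps — the two homomorphism-density estimates and the pair count — are routine.
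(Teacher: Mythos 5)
Your proposal is correct and follows essentially the same route as the paper: Markov's inequality applied to the densest subgraph $F$ for the $0$-statement (you merge the paper's balanced/unbalanced case split into one step), and a second-moment/Chebyshev argument for the $\infty$-statement, with the variance bounded by summing covariances over intersection patterns $(s,j)$ exactly as in the paper's Lemma \ref{lem:meanvar} (of which you only need, and only prove, the upper bound). The point you flag as the "main obstacle" — that vertex-sharing but edge-disjoint copies are correlated through the $U_i$'s and contribute the $j=0$, $F$-with-no-edges terms — is handled the same way in the paper's variance computation.
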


\subsection{Statements of the Results} 

In light of the above proposition, hereafter, we will assume the following: 

\begin{assumption}\label{assumption} Throughout, we will assume the following on $\rho_n$, $H$, and $W$:
\begin{itemize}
\item $\rho_n \in (0, 1)$ is such that $n^{-\frac{1}{m(H)}} \ll \rho_n \ll  1$. 

\item $H = (V(H), E(H))$ is a fixed graph with $|E(H)| \geq 1.$

\item $W$ is a graphon with $t(H, W) > 0$.  
\end{itemize} 
\end{assumption}

Now, define 
\begin{align}\label{eq:ZHGn}
Z_n(H, G_n) = \frac{X_n(H, G_n) -  \E[X_n(H, G_n)]}{\sqrt{\Var[X_n(H, G_n)]}} =  \frac{\Delta(H, G_n)}{\sqrt{\Var[\Delta(H, G_n)]}} , 
\end{align}
 where $\Delta(H, G_n) := X_n(H, G_n) - \mathbb{E}[X_n(H, G_n)]$. Recalling the the decomposition in \eqref{thm:irregular}, note that 
\begin{align}
    \Var[\Delta(H, G_n)]&=\E[\Var[\Delta(H, G_n)\mid \mathcal{F}(\bm{U}_n) ]]+\Var[\E[\Delta(H, G_n)\mid \mathcal{F}(\bm{U}_n) ]]\notag\\
    &=\Var[\Delta_1(H,G_n)]+\Var[\Delta_2(H,G_n)]. 
    \label{eq:vardecomp}
\end{align} 
Our objective is to identify the regimes in $\Delta_1(H, G_n)$ and $\Delta_2(H, G_n)$ contribute to the total variance. This depends on the regime of sparsity and also on how $H$ is embedded in $W$. To this end, we need the following definition:

\begin{defn}[$H$-regularity]
A graphon $W$ is said to be $H$-regular if, for almost every $x\in[0,1]$, 
\begin{align}\label{eq:H_regular}
\bar{t}(x,H,W):=\frac{1}{|V(H)|}\sum_{a=1}^{|V(H)|} t_a(x, H, W) = t(H, W) ,  
\end{align} 
where $$t_a(x, H, W) := \E\left[\prod_{(s,t)\in E(H)}W(U_s,U_t)\mid U_{a}=x \right],$$
for $1 \leq a \leq |V(H)|.$
\label{regular}
\end{defn}

Note that in \eqref{eq:H_regular} it suffices assume that
$\overline{t}(x,H,W) $ is a constant for almost every $x \in [0, 1]$. This
is because 
\begin{align*}
\int_0^1 t_a(x, H, W)  \mathrm d x = t(H, W),  
\end{align*}
for all $a \in V(H)$. Hence, if 
$\overline{t}(x,H,W) $ is a constant almost everywhere, then the constant must be $t(H, W)$. In other words, a graphon $W$ is $H$-regular if the homomorphism density of $H$ in $W$ when one of the vertices of $H$ is marked,  is a constant independent of the value of the marked vertex. Alternatively, $H$-regularity is equivalent to the condition that the scaled $U$-statistic $\Delta_2(H,G_n)$ is first-order degenerate. 

\begin{remark}\label{RK2}
Recall that the \emph{degree function} of a graphon $W$ is defined as
\begin{align*}
d_{W}(x):=\int_0^1W(x,y) \mathrm d y .
\end{align*}
Note that when $H=K_2$ is the single edge, $$t_{1}(x,K_2,W) = \mathbb{E}\left[W(U_{1},U_{2})\bigm|U_{1}=x\right]  =
  \int_0^1W(x,y) \mathrm d y  = d_{W}(x).$$ Hence,
the notion of $K_2$-regularity
coincides with the standard notion of {\it degree regularity}, where the
 degree function $ d_{W}(x)$ is constant almost everywhere. 
\end{remark}

In the dense regime (where $\rho_n \asymp 1$), \cite{bhattacharya2021fluctuations,hladky2021limit} showed that 
$H$-regularity can result $X_n(H, G_n)$ to have non-Gaussian  fluctuations. The situation, however, is very different in the sparse regime. In this case, $H$-regularity 
implies that $\Delta_2(H, G_n)$ has asymptotically negligible contribution to the total variance of $X_n(H, G_n)$. Consequently, the asymptotic distribution of $X_n(H, G_n)$ is  determined entirely by the fluctuations of $\Delta_1(H, G_n)$.

\begin{thm}
\label{thm:regular} 
Suppose Assumption \ref{assumption} holds and the graphon $W$ is $H$-regular. Then,  
\begin{align}\label{eq:Deltaregular}
\frac{\Delta_2(H,G_n)}{\sqrt{\Var[\Delta(H, G_n)]}} \stackrel{L_2}\rightarrow 0 \quad \text{ and } \quad \frac{\Delta_1(H, G_n)}{\sqrt{\Var[\Delta(H, G_n)]}}\dto N(0,1) , 
\end{align}
Consequently,  
\begin{align}\label{eq:ZGnHregular}
Z_n(H, G_n) = \frac{\Delta(H, G_n)}{\sqrt{\Var[\Delta(H, G_n)]}}\dto N(0,1), 
\end{align}
for $Z(H, G_n)$ as defined in \eqref{eq:ZHGn}. 
\end{thm}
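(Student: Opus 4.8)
The plan is to prove Theorem~\ref{thm:regular} in three stages: first show that the variance of the edge-term $\Delta_1$ has a precise leading order, then show the vertex-term $\Delta_2$ is of strictly smaller order under $H$-regularity, and finally establish a CLT for $\Delta_1$ conditionally on $\bm U_n$, which together with the variance comparison yields the unconditional statement.

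\textbf{Step 1: Order of $\Var[\Delta_1(H,G_n)]$.} Conditionally on $\bm U_n$, the count $X(H,G_n)$ is a polynomial in the independent Bernoulli edge-indicators $a_{ij}$. Expanding $\Delta_1(H,G_n) = X(H,G_n) - \E[X(H,G_n)\mid\mathcal F(\bm U_n)]$ in the Efron--Stein / Hoeffding decomposition over the edge variables, the dominant contribution comes from the first-order (single-edge) terms. A given edge $(i,j)$ appears in a copy of $H$ at some edge-slot; removing that slot leaves a graph $H$ minus an edge, and the ``cheapest'' way to complete the copy is governed by $\max_{e\in E(H)} (|E(H)|-1)$ with the vertices filled in, i.e. the relevant exponent of $\rho_n$ is $|E(H)|-1$ counting the remaining edges, after accounting for the fact that the two endpoints are fixed. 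A standard computation (as in \cite[Chapter 6]{JLR} for the homogeneous case, with $W$-weights inserted and then integrated over $\bm U_n$) gives
\begin{align}\label{eq:var1order}
\Var[\Delta_1(H,G_n)] \asymp_{H,W} n^{2|V(H)|-2}\,\rho_n^{2|E(H)|-1},
\end{align}
where the key point is that $n\rho_n^{m(H)}\gg 1$ from Assumption~\ref{assumption} ensures this single-edge term dominates all higher-order edge terms. I would carry this out by writing $\Var[\Delta_1] = \E[\Var[X(H,G_n)\mid\mathcal F(\bm U_n)]]$ and doing the edge-expansion inside the conditional variance, then taking expectation over $\bm U_n$; the $W$-dependent constant is an explicit positive integral that is bounded away from $0$ because $t(H,W)>0$.

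\textbf{Step 2: $H$-regularity kills $\Delta_2$.} Here $\Delta_2(H,G_n) = \E[X(H,G_n)\mid\mathcal F(\bm U_n)] - \E[X(H,G_n)]$ is, up to the factor $\rho_n^{|E(H)|}|\mathrm{Aut}(H)|^{-1}$, a $U$-statistic of order $|V(H)|$ in $U_1,\dots,U_n$ with kernel $\prod_{(a,b)\in E(H)} W(x_a,x_b)$. Its Hoeffding decomposition has a first-order term with projection $x\mapsto \bar t(x,H,W) - t(H,W)$ (summed appropriately over which vertex of $H$ is pinned), which vanishes identically precisely when $W$ is $H$-regular. Hence the leading term of $\Var[\Delta_2]$ is the second-order term of the $U$-statistic, giving
\begin{align}\label{eq:var2order}
\Var[\Delta_2(H,G_n)] \lesssim_{H,W} n^{2|V(H)|-2}\,\rho_n^{2|E(H)|}.
\end{align}
Comparing \eqref{eq:var1order} and \eqref{eq:var2order}, the ratio is $O(\rho_n)\to 0$ since $\rho_n\ll 1$, which proves $\Delta_2/\sqrt{\Var[\Delta]}\to 0$ in $L_2$ and also $\Var[\Delta] \sim \Var[\Delta_1]$.

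\textbf{Step 3: Conditional CLT for $\Delta_1$ and conclusion.} By Proposition~\ref{prop:main} (the conditional CLT in Wasserstein distance stated in the introduction), conditionally on $\bm U_n$ the normalized $\Delta_1(H,G_n)/\sqrt{\Var[\Delta_1\mid\mathcal F(\bm U_n)]}$ is close in Wasserstein distance to $N(0,1)$, with a rate tending to $0$ in probability; one also needs that the random conditional variance $\Var[\Delta_1\mid\mathcal F(\bm U_n)]$ concentrates around its mean $\Var[\Delta_1]$, which follows from another moment computation (the fluctuations of the conditional variance are lower order because they involve extra $U$-averaging). Combining these, $\Delta_1(H,G_n)/\sqrt{\Var[\Delta(H,G_n)]}\dto N(0,1)$ unconditionally (integrating the conditional Wasserstein bound and using bounded-Lipschitz test functions, together with $\Var[\Delta]\sim\Var[\Delta_1]$). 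Finally, writing $Z(H,G_n) = \Delta_1/\sqrt{\Var[\Delta]} + \Delta_2/\sqrt{\Var[\Delta]}$ and applying Slutsky with Step~2 gives $Z(H,G_n)\dto N(0,1)$.

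The main obstacle I anticipate is Step~1 together with the concentration of the conditional variance in Step~3: one must identify the exact leading-order term of $\Var[X(H,G_n)\mid\mathcal F(\bm U_n)]$ as a random quantity, show it concentrates, and verify that its expectation genuinely has order $n^{2|V(H)|-2}\rho_n^{2|E(H)|-1}$ with a strictly positive constant --- this requires care about which subgraphs of $H$ obtained by deleting an edge are ``leading,'' and the bookkeeping of overlapping copies of $H$ sharing a single edge. The $H$-regularity input in Step~2 is conceptually clean (vanishing of a Hoeffding projection) but also needs the matching verification that the second-order projection is generically nonzero so that the stated order in \eqref{eq:var2order} is tight; for the upper bound alone, which is all we need here, it suffices to bound rather than compute it exactly.
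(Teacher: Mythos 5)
Your route is essentially the paper's: you bound $\Var[\Delta_2]$ by recognizing it as (a scalar multiple of) a $U$-statistic of order $|V(H)|$ whose first-order Hoeffding projection is exactly $\bar t(\cdot,H,W)-t(H,W)$ and hence vanishes under $H$-regularity, giving $\Var[\Delta_2]\lesssim n^{2|V(H)|-2}\rho_n^{2|E(H)|}$ (this is the paper's Lemma \ref{lm:HWU}); you then show this is negligible relative to the total variance, and conclude via the conditional CLT of Proposition \ref{prop:main} together with concentration of the conditional variance and Slutsky. The paper carries out the conditional-variance concentration via the strong law for $U$-statistics (equation \eqref{eq:ratio}), which is the "moment computation" you defer.

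The one concrete error is your Step 1 claim \eqref{eq:var1order} that $\Var[\Delta_1(H,G_n)]\asymp_{H,W} n^{2|V(H)|-2}\rho_n^{2|E(H)|-1}$ with the single-edge overlap term dominating. The condition $n\rho_n^{m(H)}\gg 1$ only guarantees $n^{|V(F)|}\rho_n^{|E(F)|}\gg 1$ for every subgraph $F$; it does not identify which $F$ minimizes this quantity, and the correct order is $\max_{F\subseteq H,\,|E(F)|\ge 1} n^{2|V(H)|-|V(F)|}\rho_n^{2|E(H)|-|E(F)|}$. For example, with $H=K_3$ and $\rho_n=n^{-0.9}$ (well above the threshold $n\rho_n\gg 1$) the triangle-overlap term $n^3\rho_n^3=n^{0.3}$ dominates your single-edge term $n^4\rho_n^5=n^{-0.5}$, so the asserted $\asymp$ fails. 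Fortunately the error is harmless here: your comparison in Step 2 only needs the lower bound $\Var[\Delta_1]\gtrsim n^{2|V(H)|-2}\rho_n^{2|E(H)|-1}$ (the single-edge term is one term in the maximum, and its positivity is exactly the content of the paper's Lemma \ref{lem:positivity}), which already yields $\Var[\Delta_2]/\Var[\Delta_1]\lesssim\rho_n\to 0$; and Step 3 only uses $\Var[\Delta]\sim\Var[\Delta_1]$, which follows from the variance decomposition regardless of the exact order. Replace the $\asymp$ by $\gtrsim$ and the argument is complete. (The exact order of $\Var[\Delta_1]$, with its dependence on which $F$ achieves the maximum, is precisely what drives the phase transition in Theorem \ref{thm:irregular}, so the distinction is not cosmetic.)
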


Next, we consider the case where $W$ is not $H$-regular. In this case, the situation is more delicate. Specifically, an additional threshold on $\rho_n$ emerges, which determines whether or not $\Delta_2(H, G_n)$ contributes to the fluctuations of $X_n(H, G_n)$. To describe this threshold we need the following definition:

\begin{defn}[Strongly balanced graph]\cite[Equation (3.17)]{JLR}
    For a fixed graph $H=(V(H), E(H))$ define
    \begin{equation}
        m_1(H):=\max_{F\subseteq H,\,|V(F)|\ge2}\frac{|E(F)|}{|V(F)|-1}, 
        \label{eq:m1H}
    \end{equation}
    where the maximum is taken over all possible subgraphs of $H$ with at least $2$ vertices. If the maximum is attained at $H$, that is, $m_1(H)=\frac{|E(H)|}{|V(H)|-1},$ then $H$ is said to be a {\it strongly balanced} graph. Moreover, if the maximum is attained uniquely at $H$, then $H$ is called a {\it strictly strongly balanced} graph. 
\end{defn}

Cycles, trees and complete graphs are strongly balanced. Figure~\ref{fig:EF} shows examples of graphs $H$ that are not strongly balanced. In both cases, for the subgraph $F$ marked by the red dotted circle, $\frac{|E(F)|}{|V(F)|-1} > \frac{|E(H)|}{|V(H)|-1}$. 


\begin{remark} 
The notion of a strongly balanced graph was introduced by \cite{rucinski1986strongly}, where it is shown that $n\rho_n^{m_1(H)} \asymp 1$ is the threshold for the containment of a rooted copy of $H$ in the Erd\H{o}s-R\'{e}nyi model $G(n, \rho_n)$. Recently, \citet{maugis2024central} showed that for the sparse graphon model the number of rooted copies of $H$ (appropriately standardized) has Gaussian fluctuations,  whenever $n\rho_n^{m_1(H)} \gg 1$. 
In Theorem \ref{thm:irregular} we prove that when $W$ is not $H$-regular, then $n\rho_n^{m_1(H)} \asymp 1$ marks the threshold where the asymptotic behavior of $\Delta_1(H, G_n)$ and $\Delta_2(H, G_n)$ undergoes a phase transition. 
\end{remark} 

In the following proposition we collect a few basic implications of being strongly balanced. The proof is given in Appendix \ref{sec:Hpf}.

\begin{prop} \label{prop:H}  For any graph $H$ with $|E(H)|\ge1$, $m(H) < m_1(H)$. Further, if $H$ is strongly balanced, then $H$ is strictly balanced.  
\end{prop}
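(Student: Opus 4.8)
The plan is to handle the two assertions separately; both are elementary manipulations of the ratios in \eqref{eq:mH} and \eqref{eq:m1H}, so I expect the proof to be short.

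For $m(H) < m_1(H)$, I would pick a subgraph $F^\star \subseteq H$ attaining the maximum in \eqref{eq:mH}. The first observation is that $F^\star$ must have at least one edge: since $|E(H)| \ge 1$ we have $m(H) \ge |E(H)|/|V(H)| > 0$, so the maximizer cannot be a single isolated vertex. Hence $|V(F^\star)| \ge 2$, so $F^\star$ is also an admissible competitor in the maximum defining $m_1(H)$, and since $|E(F^\star)| \ge 1$,
\[
m_1(H) \ \ge\ \frac{|E(F^\star)|}{|V(F^\star)|-1} \ >\ \frac{|E(F^\star)|}{|V(F^\star)|} \ =\ m(H).
\]

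For the second assertion I would argue by contradiction. Assume $H$ is strongly balanced, so $m_1(H) = |E(H)|/(|V(H)|-1)$, but that $H$ is \emph{not} strictly balanced, i.e.\ there is a proper subgraph $F \subsetneq H$ with $|E(F)|/|V(F)| \ge |E(H)|/|V(H)|$. Write $v = |V(F)|$, $f = |E(F)|$, $h = |V(H)|$, $e = |E(H)|$. If $v = h$, then $F \subsetneq H$ forces $E(F) \subsetneq E(H)$, hence $f/v = f/h < e/h$, a contradiction; so $v < h$. Also $e \ge 1$ together with $f/v \ge e/h$ gives $f \ge 1$, hence $v \ge 2$, so $F$ is admissible in \eqref{eq:m1H}. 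The one substantive point is that $t \mapsto t/(t-1) = 1 + 1/(t-1)$ is strictly decreasing on $(1,\infty)$; since $v \le h-1 < h$ this yields $v/(v-1) > h/(h-1)$. Multiplying $f/v \ge e/h$ by $v/(v-1) > 0$ and then applying this inequality,
\[
\frac{f}{v-1} = \frac{f}{v}\cdot\frac{v}{v-1} \ \ge\ \frac{e}{h}\cdot\frac{v}{v-1} \ >\ \frac{e}{h}\cdot\frac{h}{h-1} \ =\ \frac{e}{h-1},
\]
which contradicts $m_1(H) = e/(h-1)$. Hence no such $F$ exists; and since the same reasoning also shows that every proper subgraph with $V(F) = V(H)$ has ratio strictly below $e/h$, the maximum in \eqref{eq:mH} is attained uniquely at $H$, i.e.\ $H$ is strictly balanced in the sense of Definition \ref{balanced}.

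I do not anticipate a genuine obstacle: the whole argument rests on the monotonicity of $t/(t-1)$, which is exactly what converts the strict inequality $|V(F)| < |V(H)|$ into the strict gain needed to close the contradiction. The only thing requiring mild care is the bookkeeping of the degenerate cases ($|V(F)| \le 1$, and subgraphs on the full vertex set with fewer edges), so that the \emph{uniqueness} clause in the definition of strictly balanced is genuinely verified and not merely the weaker "balanced" property.
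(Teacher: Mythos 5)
Your proposal is correct and follows essentially the same route as the paper: the first claim comes from noting that the maximizer of $m(H)$ has at least one edge so that $|E(F)|/|V(F)| < |E(F)|/(|V(F)|-1)$, and the second rests on the monotonicity of $t\mapsto t/(t-1)$ applied to $|V(F)|<|V(H)|$ (you phrase it as a contradiction, the paper argues directly, but the inequality chain is the same). Your explicit handling of the degenerate cases ($|V(F)|\le 1$ and spanning proper subgraphs) matches, and slightly tightens, the paper's brief remarks.
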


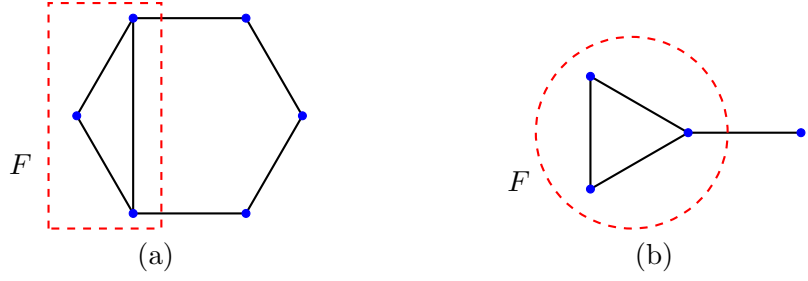
\begin{figure}
    \centering
    \begin{subfigure}[b]{6.5cm}
    \centering
    \begin{tikzpicture}[scale=0.75]
    \draw[black,thick] (0,0) -- (1,1.732);
    \draw[black,thick] (0,0) -- (1,-1.732);
    \draw[black,thick] (1,1.732) -- (3,1.732);
    \draw[black,thick] (1,1.732) -- (1,-1.732);
    \draw[black,thick] (1,-1.732) -- (3,-1.732);
    \draw[black,thick] (3,1.732) -- (4,0);
    \draw[black,thick] (3,-1.732) -- (4,0);
    \filldraw[blue] (0,0) circle (2pt) node {};
    \filldraw[blue] (1,1.732) circle (2pt) node {};
    \filldraw[blue] (1,-1.732) circle (2pt) node {};
    \filldraw[blue] (3,1.732) circle (2pt) node {};
    \filldraw[blue] (3,-1.732) circle (2pt) node {};
    \filldraw[blue] (4,0) circle (2pt) node {};
    \draw[red,thick,dashed] (-0.5,-2) rectangle (1.5,2);
    \draw (-1,-0.5) node[anchor=north] {$F$};
    \end{tikzpicture} \\
    (a) 
    \end{subfigure}%
    ~ 
    \begin{subfigure}[b]{6.5cm}
    \centering
    \begin{tikzpicture}[scale=0.75]
    \draw[black,thick] (0,0) -- (2,0);
    \draw[black,thick] (0,0) -- (-1.732,-1);
    \draw[black,thick] (0,0) -- (-1.732,1);
    \draw[black,thick] (-1.732,-1) -- (-1.732,1);
    \filldraw[blue] (0,0) circle (2pt) node {};
    \filldraw[blue] (2,0) circle (2pt) node {};
    \filldraw[blue] (-1.732,-1) circle (2pt) node {};
    \filldraw[blue] (-1.732,1) circle (2pt) node {};
    \draw[red,thick,dashed] (-1,0) circle (1.7cm);
    \draw (-3,-0.5) node[anchor=north] {$F$};
    \end{tikzpicture} \\
	(b) 
    \end{subfigure} 
       \caption{ \small{ (a) A strictly balanced graph that is not strongly balanced;  and (b) a balanced (but not strictly balanced) graph, which is not strongly balanced. } }  
        \label{fig:EF}

\end{figure}

\begin{remark} 
Proposition \ref{prop:H} shows that strongly balanced implies strictly balanced. The converse, however, is not true. Specifically, Figure \ref{fig:EF} (a)  shows a strictly balanced graph that is not strongly balanced. In this case, the subgraph $F$ marked by the red dotted rectangle satisfies
$$\frac{|E(F)|}{|V(F)|-1}=\frac{3}{2}>\frac{7}{5}=\frac{|E(H)|}{|V(H)|-1}.$$
Further Figure \ref{fig:EF} (b) is another example of graph that is balanced (but not strictly balanced), which is not strongly balanced.  
\label{remark:strongH}
\end{remark}

The following result provides a complete characterization of the fluctuations of $\Delta_1(H, G_n)$ and $\Delta_2(H, G_n)$ and, as a result, that of $\Delta(H, G_n)$, when the graphon $W$ is not $H$-regular. 

\begin{thm}
\label{thm:irregular} 
Suppose Assumption \ref{assumption} holds and the graphon $W$ is not $H$-regular. Then the following hold:
\begin{enumerate}
    \item[$(1)$] If $n\rho_n^{m_1(H)} \ll 1,$ then 
    \begin{align}\label{eq:Deltairregularb}
    \frac{\Delta_2(H,G_n)}{\sqrt{\Var[\Delta(H, G_n)]}} \stackrel{L_2}\rightarrow 0 \quad \text{ and } \quad  \frac{\Delta_1(H,G_n)}{\sqrt{\Var[\Delta(H, G_n)]}}\dto N(0,1).
    \end{align}
    
    \item[$(2)$] If $n\rho_n^{m_1(H)} \gg 1,$ then 
        \begin{align}\label{eq:Deltairregularu}
        \frac{\Delta_1(H,G_n)}{\sqrt{\Var[\Delta(H, G_n)]}} \stackrel{L_2}\rightarrow 0 \quad \text{ and } \quad  \frac{\Delta_2(H,G_n)}{\sqrt{\Var[\Delta(H, G_n)]}}\dto N(0,1). 
        \end{align}
    
    \item[$(3)$] If $n\rho_n^{m_1(H)}\to c\in(0,\infty),$ then there exists a constant $\kappa\in(0,1)$ (defined in \eqref{eq:varianceRW}) such that 
    \begin{align}\label{eq:Deltairregulart}
    \begin{pmatrix} 
    \frac{\Delta_1(H,G_n)}{\sqrt{\Var[\Delta(H, G_n)]}} \\ 
    \frac{\Delta_2(H,G_n)}{\sqrt{\Var[\Delta(H, G_n)]}}
    \end{pmatrix} \dto N \left( 
    \begin{pmatrix} 
    0 \\ 
    0
    \end{pmatrix},   \begin{pmatrix} 
    \kappa & 0 \\ 
    0 & 1- \kappa 
    \end{pmatrix} \right) . 
    \end{align}  
\end{enumerate}
Consequently, in all three cases, 
\begin{align}
\label{eq:ZGnHirregular}
Z_n(H, G_n) = \frac{\Delta(H, G_n)}{\sqrt{\Var[\Delta(H, G_n)]}}\dto N(0,1), 
\end{align}  
whenever $n^{-\frac{1}{m(H)}} \ll \rho_n \ll  1$. 
\end{thm}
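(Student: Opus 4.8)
# Proof Proposal for Theorem \ref{thm:irregular}

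The plan is to analyze the two pieces of the decomposition \eqref{eq:XHGn12} separately and then assemble. First I would obtain sharp asymptotics for the two variance terms $\Var[\Delta_1(H,G_n)]$ and $\Var[\Delta_2(H,G_n)]$ in terms of $n$ and $\rho_n$. For $\Delta_1(H,G_n)$, conditioning on $\bm U_n$, the count is a sum of products of independent centered edge-indicators; expanding the variance and grouping terms by which subgraph $F \subseteq H$ carries the "overlap" between two copies, the dominant contribution (in the sparse regime) comes from copies sharing a single edge, giving $\Var[\Delta_1(H,G_n)] \asymp_{H,W} n^{2|V(H)|-2}\rho_n^{2|E(H)|-1}$ (the standard Erd\H{o}s--R\'enyi-type leading term, with the graphon contributing a constant factor that is positive since $t(H,W)>0$ and edge-overlap densities are positive). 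For $\Delta_2(H,G_n)$, note it is a $U$-statistic of the i.i.d.\ variables $\bm U_n$ with kernel built from $t_a(\cdot,H,W)$; by the Hoeffding decomposition its variance is governed by the first-order projection, which is nondegenerate precisely because $W$ is \emph{not} $H$-regular (Definition \ref{regular}). This yields $\Var[\Delta_2(H,G_n)] \asymp_{H,W} n^{2|V(H)|-1}\rho_n^{2|E(H)|}$. Taking the ratio, $\Var[\Delta_2]/\Var[\Delta_1] \asymp n\rho_n$, and one checks (using $m_1(H) = |E(H)|/(|V(H)|-1)$ via Proposition \ref{prop:H} together with $n\rho_n^{m_1(H)}$) that the crossover is exactly at $n\rho_n^{m_1(H)} \asymp 1$, matching the three regimes in the statement. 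Care is needed here: in regime (1) the first-order $U$-statistic term is no longer dominant and one must verify that the higher-order Hoeffding terms of $\Delta_2$ are also negligible against $\Var[\Delta_1]$, and symmetrically in regime (2) that the non-leading clustering terms of $\Delta_1$ are negligible against $\Var[\Delta_2]$; this is where Assumption \ref{assumption} ($\rho_n \gg n^{-1/m(H)}$) enters to kill the small-$F$ terms.

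Next I would establish the distributional limits. For $\Delta_2(H,G_n)$ in regimes (2) and (3): this is a $U$-statistic in i.i.d.\ variables with a nondegenerate first-order term, so its standardized version (by its own standard deviation) is asymptotically $N(0,1)$ by the classical Hoeffding CLT for $U$-statistics; the remainder terms are lower order. For $\Delta_1(H,G_n)$ in regimes (1) and (3): this is the harder conditional CLT — one wants $\Delta_1/\sqrt{\Var[\Delta_1 \mid \bm U_n]} \dto N(0,1)$ and then to replace the conditional variance by the deterministic $\Var[\Delta_1(H,G_n)]$. Here I would invoke Proposition \ref{prop:main} (the promised conditional CLT for $\Delta_1$ in Wasserstein distance), which presumably gives, for almost every realization of $\bm U_n$, a quantitative normal approximation via a martingale/Stein or dependency-graph argument applied to the edge variables. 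To pass from the conditional statement to the unconditional one, I would show $\Var[\Delta_1(H,G_n)\mid \bm U_n]/\Var[\Delta_1(H,G_n)] \Pto 1$, again a law-of-large-numbers computation on the $\bm U_n$'s (this is where $H$-regularity would have mattered in Theorem \ref{thm:regular} but here it is not needed for $\Delta_1$ itself).

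Then comes the joint statement in regime (3), which is the crux. The key structural fact is that $\Delta_1(H,G_n)$ and $\Delta_2(H,G_n)$ are automatically uncorrelated (indeed, $\E[\Delta_1\Delta_2]=\E[\E[\Delta_1\mid\bm U_n]\Delta_2]=0$ since $\E[\Delta_1\mid\bm U_n]=0$), so by \eqref{eq:vardecomp} the constant $\kappa := \lim \Var[\Delta_1]/\Var[\Delta]$ lies in $(0,1)$ when $n\rho_n^{m_1(H)}\to c$. For the joint convergence to the \emph{independent} limit I would use the conditional-CLT structure: condition on $\bm U_n$; then $\Delta_2$ is a constant (measurable w.r.t.\ $\mathcal F(\bm U_n)$) while $\Delta_1$ is conditionally asymptotically $N(0,\kappa\Var[\Delta])$ by Proposition \ref{prop:main}; averaging the characteristic function $\E[e^{is\Delta_1/\sqrt{\Var\Delta}}\mid \bm U_n]\,e^{it\Delta_2/\sqrt{\Var\Delta}}$ over $\bm U_n$ — the first factor converges (boundedly, in probability) to $e^{-\kappa s^2/2}$ while $\Delta_2/\sqrt{\Var\Delta}\dto N(0,1-\kappa)$ — gives the product $e^{-\kappa s^2/2}e^{-(1-\kappa)t^2/2}$, which is the claimed bivariate normal with diagonal covariance. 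Finally, $Z(H,G_n)=(\Delta_1+\Delta_2)/\sqrt{\Var[\Delta]}\dto N(0,\kappa)+N(0,1-\kappa)=N(0,1)$ (sum of the two independent limit coordinates), and regimes (1),(2) follow immediately from \eqref{eq:Deltairregularb},\eqref{eq:Deltairregularu} since one piece vanishes in $L_2$ and the other converges to $N(0,1)$.

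I expect the main obstacle to be the conditional CLT for $\Delta_1(H,G_n)$ underlying Proposition \ref{prop:main} — specifically, proving a quantitative normal approximation for a sum of products of independent-but-not-identically-distributed edge indicators (the inhomogeneity comes from the random $\bm U_n$), uniformly enough over typical $\bm U_n$ that one can both average characteristic functions and control the conditional variance ratio. The variance bookkeeping (identifying which overlap subgraphs dominate in which regime, and checking the negligibility of all the others using Assumption \ref{assumption}) is routine but lengthy; the genuinely delicate point is the interchange of the conditional limit with the expectation over $\bm U_n$ at the threshold, which requires the conditional convergence to hold in probability with a bounded limiting characteristic function, not merely almost surely along subsequences.
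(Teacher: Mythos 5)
Your overall architecture is the same as the paper's: sharp variance asymptotics for the two pieces, the conditional CLT of Proposition \ref{prop:main} for $\Delta_1$, the classical non-degenerate $U$-statistic CLT for $\Delta_2$ (non-degeneracy being exactly the failure of $H$-regularity), and, at the threshold, asymptotic independence obtained by conditioning on $\bm U_n$ and multiplying characteristic functions --- this last step is precisely the paper's Lemma \ref{lm:conditionaljoint}, and your final assembly via Slutsky also matches.

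There is, however, a concrete error in your variance bookkeeping for $\Delta_1$. You assert that the dominant contribution to $\Var[\Delta_1(H,G_n)]$ comes from pairs of copies sharing a single edge, so that $\Var[\Delta_1]\asymp n^{2|V(H)|-2}\rho_n^{2|E(H)|-1}$ and hence $\Var[\Delta_2]/\Var[\Delta_1]\asymp n\rho_n$. That would place the crossover at $n\rho_n\asymp 1$, not at $n\rho_n^{m_1(H)}\asymp 1$, so your heuristic is internally inconsistent with the threshold you then claim. The correct statement (Lemma \ref{lem:meanvar} and \eqref{eq:sigmavariance}) is $\Var[\Delta_1]\asymp \max_{F\subseteq H,\,|E(F)|\ge 1} n^{2|V(H)|-|V(F)|}\rho_n^{2|E(H)|-|E(F)|}$, and for sparse $\rho_n$ the maximizing $F$ is typically a \emph{dense} subgraph of $H$, not a single edge: for $H=K_3$ and $\rho_n=n^{-0.8}$ the full-triangle overlap term is of order $n^{0.6}$ while the single-edge term is $O(1)$. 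Consequently $\Var[\Delta_2]/\Var[\Delta_1]\asymp\min_{F:\,|E(F)|\ge1}n^{|V(F)|-1}\rho_n^{|E(F)|}$, and it is this minimum over \emph{all} overlap subgraphs that crosses $1$ exactly at $n\rho_n^{m_1(H)}\asymp1$; that is where $m_1(H)$ genuinely enters, and your appeal to Proposition \ref{prop:H} does not supply it (that proposition does not say $m_1(H)=|E(H)|/(|V(H)|-1)$ for general $H$). Relatedly, in regime (3) you take for granted that $\Var[\Delta_1]/\Var[\Delta]$ \emph{converges} to some $\kappa$; this requires showing that at the threshold only the overlaps $F\in\mathscr{S}(H)$ attaining $m_1(H)$ contribute at the leading order $n^{2|V(H)|-1}\rho_n^{2|E(H)|}$ and computing their limits (the paper's Lemmas \ref{lm:H} and \ref{lm:RW}); boundedness of the ratio away from $0$ and $1$ alone does not yield a bivariate limit with a well-defined covariance. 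These points are repairable within your framework, but as written the variance step is wrong rather than merely ``routine but lengthy.''
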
 

In words, Theorem \ref{thm:irregular} shows that when $W$ is not $H$-regular, then there are 3 possibilities depending on the strictly balanced coefficient $m_1(H)$: 

\begin{itemize} 

\item $n\rho_n^{m_1(H)} \ll 1$: In this case, $\Delta_2(H, G_n)$ has asymptotically negligible contribution to the total variance of $X_n(H, G_n)$ and the asymptotic distribution of $X_n(H, G_n)$ is  determined entirely by the fluctuations of $\Delta_1(H, G_n)$.

\item $n\rho_n^{m_1(H)} \gg 1$: In this case, $\Delta_1(H, G_n)$ has asymptotically negligible contribution to the total variance of $X_n(H, G_n)$ and the asymptotic distribution of $X_n(H, G_n)$ is  determined entirely by the fluctuations of $\Delta_2(H, G_n)$.

\item $n\rho_n^{m_1(H)} \rightarrow c \in (0, \infty)$: In this case, both $\Delta_1(H, G_n)$ and $\Delta_2(H, G_n)$ contribute to  the fluctuations of $X_n(H, G_n)$. In particular, $(\Delta_1(H, G_n), \Delta_2(H, G_n))$ (after appropriate standardization) converges jointly to independent normals with non-trivial variances. 

\end{itemize}

\begin{remark} 
Recall that Theorem \ref{thm:irregular} is under the standing assumption that $n \rho_n^{m(H)} \ll  1$. By Proposition, we know that \ref{prop:H} $m(H) < m_1(H)$, for every subgraph $H$ with $|E(H)|\ge1.$ Hence, all the above three regimes in Theorem \ref{thm:irregular} are relevant. 
\end{remark}

Another consequence Theorem \ref{thm:irregular} is that $Z_n(H, G_n)$ is asymptotically normal in all the three  cases. We know from Theorem \ref{thm:regular} that the same is also true when $W$ is $H$-regular (recall \eqref{eq:ZGnHregular}). Hence, we have the following corollary: 

\begin{corollary}\label{cor:ZHGn}
Suppose Assumption \ref{assumption} holds. Then the following hold: 
$$Z_n(H, G_n) = \frac{\Delta(H, G_n)}{\sqrt{\Var[\Delta(H, G_n)]}}\dto N(0,1), $$  
where $Z_n(H, G_n)$ is as defined in \eqref{eq:ZHGn}. 
\end{corollary}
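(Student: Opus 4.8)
The plan is to deduce Corollary \ref{cor:ZHGn} directly from Theorems \ref{thm:regular} and \ref{thm:irregular} by a simple case analysis, with essentially no new computation. First I would observe that Assumption \ref{assumption} guarantees $n^{-1/m(H)} \ll \rho_n \ll 1$, so by Proposition \ref{prop:threshold} the graph $H$ appears and the count $X(H,G_n)$ is nondegenerate; moreover, by the variance decomposition \eqref{eq:vardecomp}, $\Var[\Delta(H,G_n)] = \Var[\Delta_1(H,G_n)] + \Var[\Delta_2(H,G_n)]$, which is the normalization used throughout. The trichotomy is determined by whether $W$ is $H$-regular and, if not, by the position of $\rho_n$ relative to the threshold $n\rho_n^{m_1(H)}\asymp 1$, which is legitimate since Proposition \ref{prop:H} gives $m(H) < m_1(H)$, so all of the regimes below the containment threshold $n\rho_n^{m(H)}\ll 1$ are actually realizable.

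Next I would split into the two mutually exhaustive cases. If $W$ is $H$-regular, Theorem \ref{thm:regular} gives \eqref{eq:ZGnHregular} verbatim, so $Z(H,G_n)\dto N(0,1)$ and we are done in this case. If $W$ is not $H$-regular, Theorem \ref{thm:irregular} applies and supplies \eqref{eq:ZGnHirregular} in each of its three subcases: when $n\rho_n^{m_1(H)}\ll 1$, when $n\rho_n^{m_1(H)}\gg 1$, and when $n\rho_n^{m_1(H)}\to c\in(0,\infty)$. Since for any fixed sequence $\rho_n$ at least one of these three limiting behaviors holds along every subsequence (and in fact, by passing to subsequences, one can always reduce to one of these three cases if $n\rho_n^{m_1(H)}$ does not converge, because the conclusion $Z(H,G_n)\dto N(0,1)$ is identical in all three and subsequential convergence to a common limit implies full convergence), we conclude $Z(H,G_n)\dto N(0,1)$ in the irregular case as well.

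Combining the two cases yields the claimed convergence $Z(H,G_n)\dto N(0,1)$ under Assumption \ref{assumption}, and since $Z(H,G_n)$ as defined in \eqref{eq:ZHGn} is exactly $\Delta(H,G_n)/\sqrt{\Var[\Delta(H,G_n)]}$, this is precisely the statement of the corollary. The only mild subtlety — and the one place where a careful word is warranted rather than a routine citation — is handling sequences $\rho_n$ for which $n\rho_n^{m_1(H)}$ neither tends to $0$ nor to $\infty$ nor converges to a finite positive constant (for instance, oscillating between regimes): here I would invoke the standard subsequence principle, noting that every subsequence of $\{n\rho_n^{m_1(H)}\}$ has a further subsequence falling into one of the three regimes of Theorem \ref{thm:irregular} (allowing $c=0$ and $c=\infty$), along which $Z(H,G_n)\dto N(0,1)$; since the limit law does not depend on which regime is selected, the full sequence converges. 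I do not anticipate any genuine obstacle — the corollary is a bookkeeping consequence of the two main theorems, and the proof is a few lines.
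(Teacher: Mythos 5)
Your proof is correct and follows essentially the same route as the paper: the corollary is obtained by combining \eqref{eq:ZGnHregular} from Theorem \ref{thm:regular} in the $H$-regular case with \eqref{eq:ZGnHirregular} from Theorem \ref{thm:irregular} in the irregular case. Your additional subsequence argument for sequences $\rho_n$ with $n\rho_n^{m_1(H)}$ oscillating between regimes is a legitimate point that the paper leaves implicit, and you handle it correctly since all three regimes yield the same $N(0,1)$ limit.
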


This shows that $Z_n(H, G_n)$ is asymptotically normal (irrespective of whether or not $W$ is $H$-regular), for any $\rho_n \ll 1$ above the containment threshold. In previous works, this result has been established for either certain types of graph $H$ or in certain sub-regimes of sparsity above the containment threshold. Our result closes all the gaps in both these directions by showing that asymptotic normality of $Z_n(H, G_n)$ holds for all fixed graph $H$ and for the full range of  the sparsity parameter above the containment threshold. A detailed comparison with existing results is given in Section \ref{sec:Wsparseresults}.

\subsection{A General Conditional CLT and Proof Outline}

The proofs of Theorem \ref{thm:regular} and Theorem \ref{thm:irregular} rely on a result about the conditional fluctuations of $\Delta_1(H,G_n)$, which might be of independent interest. To state the result we need the following definition: 

\begin{defn}
Given two real-valued random variables $X$ and $Z$ and a $\sigma$-algebra $\mathcal{F}$, the Wasserstein distance between $X \mid \mathcal F$ (the conditional law of $X$ given the $\sigma$-algebra $\mathcal{F}$) and $Z$ is defined as
$$d_{\mathrm{Wass}}(X\mid\mathcal{F},Z):=\sup_{g \in\mathcal{L}} \left|\E[g(X)\mid\mathcal{F}]-\E[g(Z)] \right|,$$
where $\mathcal L$ is the collection of 1-Lipschitz functions from $\R \rightarrow \R$.  
\label{defn:cond_wass}
\end{defn}

We are now ready to state our result on the conditional convergence of $\Delta_1(H,G_n)$ (appropriately normalized). Recall that $\mathcal F(\bm{U}_n)$ is the sigma-algebra generated by the collection $\bm{U}_n = \{U_1, U_2, \ldots, U_n\}$.

\begin{prop} 
Suppose Assumption \ref{assumption} holds. Then, for $Z \sim N(0, 1)$, 
    \begin{align}\label{eq:conditionaldistance}
    d_{\mathrm{Wass}} \left( \frac{\Delta_1(H,G_n)}{\sqrt{\Var[\Delta_1(H, G_n) \mid \mathcal{F}(\bm U_n) ]}}\mid \mathcal{F}(\bm{U}_n) , Z  \right)  \Pto 0. 
    \end{align} 
        \label{prop:main} 
\end{prop}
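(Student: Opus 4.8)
# Proof Proposal for Proposition \ref{prop:main}

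The plan is to prove the conditional CLT for $\Delta_1(H, G_n)$ by viewing it, conditionally on $\bm U_n$, as a polynomial (in fact, a sum of products of indicators) of the independent centered edge indicators, and then applying a quantitative CLT for such polynomials — the natural tool here is a Stein's method bound (for instance, the normal approximation bound for sums of dissociated/locally dependent random variables, or a martingale-difference CLT, or the fourth-moment / decomposition machinery for degenerate $U$-statistics of the edge variables). Concretely, write $\xi_{ij} := a_{ij} - \rho_n W(U_i, U_j)$ for the centered edge indicators; conditionally on $\bm U_n$ these are independent, mean zero, with $\Var[\xi_{ij}\mid \bm U_n] = \rho_n W(U_i,U_j)(1-\rho_n W(U_i,U_j))$. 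Then $\Delta_1(H, G_n) = \E[X(H,G_n)] - \E[X(H,G_n)\mid \mathcal F(\bm U_n)] + X(H,G_n) - \E[X(H,G_n)]$ rearranges to a polynomial in the $\xi_{ij}$ with coefficients that are products of $\rho_n W(U_i, U_j)$ over the remaining edges of each embedded copy of $H$; the leading (linear in $\xi$) term will, under the hypothesis $\Var[\Delta_1]/\Var[\Delta]\to\kappa>0$ and the sparsity Assumption \ref{assumption}, carry the bulk of the conditional variance, and the higher-degree terms will be lower order.

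First I would set up the edge expansion: expanding $\prod_{(i_s,i_t)\in E(H')} a_{i_s i_t} = \prod (\xi_{i_s i_t} + \rho_n W(U_{i_s}, U_{i_t}))$ and subtracting the conditional mean, we get $\Delta_1(H, G_n) = \sum_{\emptyset \ne A \subseteq E(H')} (\text{over all copies } H') \prod_{e\in A}\xi_e \prod_{e\in E(H')\setminus A}\rho_n W_e$. Grouping by $|A|=k$ gives a decomposition $\Delta_1 = \sum_{k\ge 1} T_k$ where $T_k$ is a homogeneous degree-$k$ form in the $\xi_e$. Second, I would estimate the conditional variances: $\Var[T_k \mid \bm U_n]$ is, up to constants depending on $H$, of order $n^{2|V(H)| - v_k} \rho_n^{2|E(H)| - e_k}\cdot \rho_n^{e_k}$ where $e_k, v_k$ count edges/vertices in the "most efficient" sub-configuration of $k$ shared edges — this is exactly the kind of bookkeeping that, combined with $n\rho_n^{m(H)}\gg 1$ (Assumption \ref{assumption}) and the hypothesis that $\Var[\Delta_1]\asymp \Var[\Delta]$, shows $\Var[T_k\mid\bm U_n] \ll \Var[T_1\mid\bm U_n]$ for $k\ge 2$ with probability tending to one (one needs concentration of these $\bm U_n$-measurable variance proxies around their expectations, which follows from a second-moment / bounded-differences argument since each $U_i$ affects only $O(n^{|V(H)|-1})$ terms). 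Third, for the linear term $T_1 = \sum_{i<j} c_{ij}(\bm U_n) \xi_{ij}$ with $c_{ij}(\bm U_n)$ an explicit $\bm U_n$-measurable coefficient (a sum over copies of $H$ through edge $ij$ of the product of $\rho_n W$ over the other edges), conditionally on $\bm U_n$ this is a sum of independent mean-zero bounded random variables, so the Berry–Esseen / Lindeberg bound gives $d_{\mathrm{Wass}}(T_1/\sqrt{\Var[T_1\mid\bm U_n]}\mid\bm U_n, Z) \lesssim \sum_{i<j} \E[|c_{ij}\xi_{ij}|^3\mid\bm U_n] / \Var[T_1\mid\bm U_n]^{3/2}$, and one checks this ratio is $o_P(1)$ using the sparsity assumption (the third-moment-to-variance$^{3/2}$ ratio scales like a negative power of $n\rho_n^{m(H)}$, or more precisely of $n\rho_n^{m_1(H)}$-type quantities, which diverges). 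Finally, combine: $\Delta_1/\sqrt{\Var[\Delta_1\mid\bm U_n]} = T_1/\sqrt{\Var[\Delta_1\mid\bm U_n]} + o_{L_2(\mid\bm U_n)}(1)$, the normalizing constant ratio $\Var[T_1\mid\bm U_n]/\Var[\Delta_1\mid\bm U_n]\to 1$ in probability, and a 1-Lipschitz test function is insensitive to such perturbations, yielding \eqref{eq:conditionaldistance}.

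The main obstacle I anticipate is the variance bookkeeping in step two — namely, showing rigorously and uniformly that \emph{every} higher-degree form $T_k$, $k\ge 2$, has conditional variance negligible relative to that of $T_1$, across the entire sparsity range and for arbitrary $H$ (not just balanced or strongly balanced ones). This requires a careful combinatorial optimization over all ways $k$ edges can be shared among two overlapping copies of $H$, matching the exponents against the denominator $\Var[\Delta_1]$ (which itself must be pinned down to the order $n^{2|V(H)|-2}\rho_n^{2|E(H)|-1}$ coming from single shared edges, using $H$-irregularity is \emph{not} assumed here — only $\Var[\Delta_1]\asymp\Var[\Delta]$ and the sparsity bounds); this is where the hypotheses $n^{-1/m(H)}\ll\rho_n$ and the definition of $m_1(H)$ enter most delicately. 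A secondary technical point is transferring the deterministic variance estimates to the random coefficients $c_{ij}(\bm U_n)$: one must show $\Var[T_1\mid\bm U_n]$ concentrates around its expectation (so that the normalization is well-behaved on an event of probability $\to 1$), which is a routine but necessary second-moment computation exploiting that $W$ is bounded and that the dependence of the variance proxy on each individual $U_i$ is mild.
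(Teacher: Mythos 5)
There is a genuine gap in your step two, and it is not merely a bookkeeping difficulty: the claim that the linear-in-$\xi$ term $T_1$ carries the bulk of the conditional variance is false over a large part of the admissible sparsity range. Your own variance accounting gives $\Var[T_k\mid\bm U_n]\asymp \max_{F\subseteq H,\,|E(F)|=k} n^{2|V(H)|-|V(F)|}\rho_n^{2|E(H)|-|E(F)|}$, so $T_1$ dominates $T_k$ only when $n^{|V(F)|-2}\rho_n^{|E(F)|-1}\gg 1$ for every $F\subseteq H$ with $|E(F)|\ge 2$. Assumption \ref{assumption} only guarantees $n\rho_n^{m(H)}\gg1$, which is much weaker. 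Concretely, take $H=K_3$ and $n^{-1}\ll\rho_n\ll n^{-1/2}$: then $\Var[T_1\mid\bm U_n]\asymp n^4\rho_n^5$ while $\Var[T_3\mid\bm U_n]\asymp n^3\rho_n^3$, and $n^3\rho_n^3/(n^4\rho_n^5)=(n\rho_n^2)^{-1}\gg1$, so the fully degenerate cubic form dominates. The hypothesis $\Var[\Delta_1]/\Var[\Delta]\to\kappa>0$ does not rescue you --- in this example $\kappa=1$ and the hypothesis holds, yet $T_1$ is negligible. Consequently your main tool (Berry--Esseen/Lindeberg for the conditionally independent sum $T_1$) cannot produce the CLT; the dominant contribution is a degenerate multilinear form in the $\xi_e$, whose normal approximation requires a locally-dependent/dependency-graph argument or moments, i.e.\ exactly the machinery you were trying to avoid. (This is the same reason the classical Erd\H{o}s--R\'enyi proofs of Ruci\'nski and Barbour et al.\ do not use the H\'ajek projection in the sparse regime.)

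The paper sidesteps the chaos decomposition entirely: it writes $\Delta_1(H,G_n)=\sum_{s}\bigl(I_s-\E[I_s\mid\CF(\bm U_n)]\bigr)$ as a sum over \emph{copies of $H$}, builds a conditional dependency graph in which two copies are adjacent iff they share an edge, and applies a conditional version of the dependency-graph Stein bound $d_{\mathrm{Wass}}\lesssim R\,Q^2/\sigma^3$. The terms $R$ and $Q$ are bounded by $\E[X(H,G_n)]$ and $\E[X(H,G_n)]/\Phi_H$ with $\Phi_H:=\min_{F\subseteq H,|V(F)|\ge1}n^{|V(F)|}\rho_n^{|E(F)|}$, and the conditional variance is lower-bounded (using the hypothesis $\kappa>0$ together with a strong law for the $U$-statistics $Z_n(F)$) by $(\E[X(H,G_n)])^2/\Phi_H$, giving a rate $\Phi_H^{-1/2}\to0$ uniformly over the whole range $n\rho_n^{m(H)}\gg1$. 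If you want to keep your architecture, you would have to replace the Lindeberg step by a Stein-type bound applied to whichever $T_k$ dominates (or to all of them jointly), at which point you have essentially reconstructed the paper's argument with extra decomposition overhead. Your secondary point (concentration of the conditional variance around its mean) is handled in the paper by the almost-sure convergence $Z_n(F)/\E[Z_n(F)]\to1$, which is the right tool there.
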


The proof of Proposition \ref{prop:main} is given in Section \ref{sec:conditionalpf}. The proof uses on Stein's method based on dependency graphs, which bounds the distance to normality in the Wasserstein distance. Note that the Wasserstein distance in the LHS of \eqref{eq:conditionaldistance} is itself random (since it is defined conditional on $\mathcal{F}(\bm{U}_n)$), hence the convergence in \eqref{eq:conditionaldistance} is shown to hold in probability. An important feature of result in Proposition \ref{prop:main} is that it holds regardless of whether or not the graphon $W$ is $H$-regular. Hence, Proposition \ref{prop:main} can be used to prove the assertions of Theorem \ref{thm:regular} and Theorem \ref{thm:irregular} as follows: 

\begin{itemize} 

\item {\it $W$ is $H$-regular} (Theorem \ref{thm:regular}): This is proved in Section \ref{sec:thm1proof}. The first step is a variance computation showing that 
$$\Var[\Delta_2(H,G_n)] \ll \Var[\Delta(H, G_n)] \to 0.$$ Then, by invoking Proposition \ref{prop:main} the result in \eqref{eq:Deltaregular} follows. 

\item {\it $W$ is is not $H$-regular} (Theorem \ref{thm:irregular}): This is proved in Section \ref{sec:thm2proof}. There are three cases: 

\begin{itemize}

\item $n\rho_n^{m_1(H)} \ll 1$: This is similar to the regular regime. A variance computation shows that 
$$\Var[\Delta_2(H,G_n)] \ll \Var[\Delta(H, G_n)] \to 0.$$ Then, by invoking Proposition \ref{prop:main} the result in \eqref{eq:Deltairregularb} follows.

\item $n\rho_n^{m_1(H)} \gg 1$:  In this case, a variance computation shows that 
$$\Var[\Delta_1(H,G_n)] \ll \Var[\Delta(H, G_n)] \to 0.$$ Hence, only $\Delta_2(H,G_n)$ contributes to the fluctuations of $\Delta(H, G_n)$. In this case, $\Delta_2(H,G_n)$ is a non-degenerate $U$-statistic of order $|E(H)|$ (since $W$ is not $H$-regular). Hence, invoking standard results about the asymptotic normality of $U$-statistics the result in \eqref{eq:Deltairregularu} follows. 

\item  $n\rho_n^{m_1(H)} \rightarrow c \in (0, \infty)$: In this case, $\Delta_1(H,G_n)$ and $\Delta_2(H,G_n)$ both contribute to the fluctuations of $\Delta(H, G_n)$. In particular, by combining Proposition \ref{prop:main} (for $\Delta_1(H,G_n)$), the asymptotic normality of non-degenerate $U$-statistics (for $\Delta_2(H,G_n)$), and Lemma \ref{lm:conditionaljoint} gives,   \begin{equation*}
    \begin{pmatrix} 
    \frac{\Delta_1(H,G_n)}{\sqrt{\Var[\Delta_1(H, G_n)]}} \\ 
    \frac{\Delta_2(H,G_n)}{\sqrt{\Var[\Delta_2(H, G_n)]}}
    \end{pmatrix} \dto N \left( 
    \begin{pmatrix} 
    0 \\ 
    0
    \end{pmatrix},   \begin{pmatrix} 
    1 & 0 \\ 
    0 & 1  
    \end{pmatrix} \right) . 
    \end{equation*}  
Then by a direct computation of the limit of $\frac{\Var[\Delta_1(H,G_n)]}{\Var[\Delta(H, G_n)]}$, the result in \eqref{eq:ZGnHirregular} follows.
\end{itemize}

\end{itemize}

\subsection{Prior Work}
\label{sec:Wsparse}

In this section, we summarize prior work on the fluctuations of subgraph counts in inhomogeneous random graph models. For clarity, we organize the discussion into three subsections, focusing separately on the (homogeneous) Erd\H{o}s-R\'{e}nyi model, the sparse graphon model, and the dense graphon model.

\subsubsection{Erd\H{o}s-R\'enyi Model} 
In the Erd\H{o}s-R\'enyi model $G(n, \rho_n)$ (which corresponds to $W \equiv 1$), it is well-known that $Z_n(H, G_n)$ is asymptotically normal if and only if $n\rho_n^{m(H)} \gg 1$ and $n^2(1-\rho_n) \gg 1$. This result was established independently by \cite{rucinski1988small} (using the method of moments), \cite{barbour1989central} (using Stein's method), \cite{nowicki1989asymptotic} (using $U$-statistics), and \cite{janson1990functional} (using martingales), among others (see \citet[Chapter 6]{JLR} for a comprehensive discussion). Note that in the sparse regime $\rho_n \ll 1$, the condition $n^2(1-\rho_n) \gg 1$ is always satisfied, ensuring asymptotic normality for all subgraph counts $X_n(H, G_n)$ whenever  $n\rho_n^{m(H)} \gg 1$. Corollary \ref{cor:ZHGn} shows that the same result hold for the sparse $W$-random model as well.

\subsubsection{Sparse Graphon Model} 
\label{sec:Wsparseresults}

Central limit theorem for subgraph counts in the sparse graphon model was first derived by \citet{bickel2011method}. The sparsity conditions in their result depends on whether $H$ is acyclic (that is, $H$ does not contain a cycle) or whether it contains a cycle. Specifically, they showed the following results. Throughout, suppose $G_n \sim G(n, \rho_n, W)$. 

\begin{itemize}

\item {\it $H$ is acyclic}: In this case, \citet[Theorem 1]{bickel2011method} shows that  $X_n(H, G_n)$ (appropriately standardized) is asymptotically normal, whenever $n \rho_n \gg 1$.\footnote{Technically, \citet{bickel2011method} considers the number of induced copies, but their proof can be easily adapted for $X_n(H, G_n)$.} Note that for any acyclic graph $m(H)<1$. Hence, there is a gap between the sparsity regime covered in \citet[Theorem 1]{bickel2011method} and the full sparsity regime ($n \rho_n^{m(H)} \gg 1$), when $H$ is acyclic.

\item {\it $H$ is not acyclic}: In this case, \citet[Theorem 1]{bickel2011method} shows that  $X_n(H, G_n)$ is asymptotically normal, whenever $n \rho_n^{\frac{|V(H)|}{2}} \gg 1$. Recalling \eqref{eq:mH} note that 
$$m(H) =\max_{F\subseteq H,\,|V(F)|\ge1}\frac{|E(F)|}{|V(F)|} \leq \max_{F\subseteq H,\,|V(F)|\ge1}\frac{ {|V(F)| \choose 2} }{|V(F)|} \leq \frac{|V(H)|-1}{2} < \frac{|V(H)|}{2}. $$
Therefore, there is again a gap between the sparsity regime in \citet[Theorem 1]{bickel2011method} and the full sparsity regime ($n \rho_n^{m(H)} \gg 1$), when $H$ contains a cycle. 

\end{itemize}
Later, \citet{zhang2022edgeworth} strengthened above the results by proving Edgeworth expansions and Berry-Essen bounds, under stronger sparsity conditions. Very recently, using the method of cumulants \citet{liu2026gaussian} derived normal approximation bounds in the Kolmogorov distance for $Z_n(H, G_n)$, when $H$ is strongly balanced. 
Notably, the rate of convergence in \citet[Corollary~7.8]{liu2026gaussian} depends on whether $n\rho_n^{m_1(H)} \ll 1$ or $n\rho_n^{m_1(H)} \gg 1$, which, interestingly, coincides with the threshold identified in Theorem~\ref{thm:irregular}, where the dominant contribution to the total variance shifts from $\Delta_1(H, G_n)$ and $\Delta_2(H, G_n)$. The results in \cite{liu2026gaussian}, however, do not consider the case where $H$ is not strongly balanced, or when $n\rho_n^{m_1(H)} \asymp 1$. In comparison, our results apply to all fixed subgraphs (not necessarily strongly balanced) and cover the entire sparsity regime, albeit without explicit rates of convergence. 

\subsubsection{Dense Graphon Model} 

Throughout this paper we have considered the sparse graphon model (where $\rho_n \ll 1$). In the dense regime (when $\rho_n \asymp 1$), the asymptotic normality of 
$X_n(H, G_n)$ for all graphs $H$ was established  by \citet{feray2020graphons} (see also \cite{invariant2022limit,delmas2021asymptotic,kaur2021higher,zhang2021berryesseen} for alternate proofs, extensions, and related results). 
However, the limiting normal distribution of $X_n(H, G_n)$ as derived in \cite{feray2020graphons}, can be degenerate depending on the structure of $W$. This degeneracy phenomenon was investigated by \citet{hladky2021limit} when $H = K_r$ is the $r$-clique (the complete graph on $r$ vertices), for general graphs $H$ by \citet{bhattacharya2021fluctuations}, and for joint distributions by \citet{graphonjoint}. Specifically, these results show that when $W$ is not $H$-regular (recall Definition \ref{regular}), then $X_n(H, G_n)$ is asymptotically Gaussian, with a normalization factor of $n^{|V(H)| - \frac{1}{2}}$. However, when $W$ is $H$-regular, then the normalization factor becomes $n^{|V(H)| - 1}$ and the limiting distribution of $X_n(H, G_n)$ has, in general, a Gaussian component and another independent (non-Gaussian) component which is a (possibly) infinite weighted sum of centered chi-squared random variables. This degeneracy phenomenon also appears in the subsequent work of \citet{chatterjee2024fluctuation} on the fluctuations of the largest eigenvalue. 
Recently, \citet{huang2024gaussian} established an invariance principle for $X_n(H, G_n)$ that encompasses higher-order degeneracies.

\section{Proof of Proposition \ref{prop:threshold} }
\label{sec:thresholdpf}

The proof of Proposition \ref{prop:threshold} is a standard application of the first and second moment methods. Specifically, it relies on the following lemma, which derives the asymptotic orders of the mean and variance of $X_n(H, G_n)$.

\begin{lem} Suppose Assumption \ref{assumption} holds. 
Then for $G_n \sim G(n, \rho_n, W)$, 
\begin{align}\label{eq:EHGn}
\E[X_n(H, G_n)]\asymp_{H, W} n^{|V(H)|}\rho_n^{|E(H)|}. 
\end{align} 
Moreover, for $\rho_n \ll 1$, 
\begin{align}\label{eq:VarGn}
\Var[X_n(H, G_n)] \asymp_{H, W} 
\left\{
\begin{array}{cc}
\displaystyle{\max_{F\subseteq H:|V(F)|\ge 1}} \frac{n^{2|V(H)|}\rho_n^{2|E(H)|}}{n^{|V(F)|}\rho_n^{|E(F)|}}  &   \text{ if } W \text{ is not } H\text{-regular} , \\
\displaystyle{\max_{F\subseteq H:|E(F)|\ge 1}} \frac{n^{2|V(H)|}\rho_n^{2|E(H)|}}{n^{|V(F)|}\rho_n^{|E(F)|}}  &    \text{ if } W \text{ is } H\text{-regular}  . 
\end{array}
\right.  
\end{align}  
\label{lem:meanvar}
\end{lem}

The proof of Lemma \ref{lem:meanvar} is given in Section \ref{sec:meanvarpf}. Here, using this result, we complete the proof of Proposition \ref{prop:threshold}. First, suppose $n \rho_n^{m(H)} \ll 1$. Then there are two cases: 

\begin{itemize} 

\item {\it $H$ is balanced}: In this case, $m(H) = \frac{|E(H)|}{|V(H)|}$ and $n \rho_n^{m(H)} \ll 1$ implies $n^{|V(H)|}\rho_n^{|E(H)|} \ll 1$, that is, $\E[X_n(H,G_n)]\to 0$ (recall \eqref{eq:EHGn}). Therefore, by Markov's inequality, 
\begin{align*}
\P(X_n(H,G_n)> 0) & =\P(X_n(H,G_n)\ge 1)  \le \E[X_n(H,G_n)] \rightarrow 0 . 
\end{align*} 

\item {\it $H$ is unbalanced}: Let $F\subseteq H$ be a subgraph of $H$ such that $m(H)=\frac{|E(F)|}{|V(F)|}$. Then $n \rho_n^{m(H)} \ll 1$ implies $n^{|V(F)|}\rho_n^{|E(F)|}\to 0$, that is, $\E[X_n(F,G_n)] \to 0$. This implies, 
$$\P(X_n(H,G_n)>0) \le \P(X_n(F,G_n)>0) \le \E[X_n(F,G_n)] \to 0.$$

\end{itemize}
The above two cases combined establish the 0-statement in Proposition \ref{prop:threshold}. 

Next, suppose $n \rho_n^{m(H)} \gg 1$. Then by Lemma \ref{lem:meanvar}, 
\begin{align}\label{eq:H1}
     \frac{\mathrm{Var}[X_n(H,G_n)]}{(\E[X_n(H,G_n)])^2} \lesssim_{H, W} \max_{F\subseteq H,\,|V(F)|\geq 1} \frac{1}{n^{|V(F)|} \rho_n^{|E(F)|}} . 
\end{align} 
Note that the condition $n \rho_n^{m(H)} \gg 1$ implies $n^{|V(F)|}\rho_n^{|E(F)|} \gg 1$, for  all $F\subseteq H$ such that $|V(F)| \geq 1$. Hence, the RHS of \eqref{eq:H1} tends to $0$ whenever $n \rho_n^{m(H)} \gg 1$. This means, 
$$\frac{X_n(H,G_n)}{\E[X_n(H,G_n)]} \Pto 1 \quad \implies \quad X_n(H,G_n) \Pto \infty,$$
since $\E[X_n(H,G_n)] \rightarrow \infty$, whenever $n \rho_n^{m(H)} \gg 1$. This completes the proof of Proposition \ref{prop:threshold}. \hfill $\Box$

\subsection{Proof of Lemma \ref{lem:meanvar}}
\label{sec:meanvarpf}

The result in \eqref{eq:EHGn} is an immediate consequence of \eqref{eq:EXHW} and the condition $t(H, W) > 0$ from Assumption \ref{assumption}.  

We now proceed with the proof of \eqref{eq:VarGn}. To this end, let $\mathcal{H}_n=\{H_1,H_2,\ldots,H_M\}$ be the collection of all copies of $H$ in the complete graph $K_n$. Here, $M=X_n(H, K_n)$ is the number of copies of $H$ in $K_n$. Now, define $I_s:=\bm{1}\{H_s\subseteq G_n\}$, for $1\le s \le M$. Then $\mathbb{E}[I_s]=\rho_n^{|E(H)|}t(H,W)$, for all $1\le s \le M.$ Furthermore,  
\begin{align}
    & \Var[X_n(H, G_n)] \nonumber \\ 
    & =\Var\left[\sum_{s=1}^M I_s \right] \nonumber \\
    &=\sum\limits_{\substack{H_s,H_t \in\mathcal{H}_n\\|V(H_s)\cap V(H_t)|\ge1}}\Cov[I_s,I_t]  \nonumber\\ 
    & =\sum_{F\subseteq H:|V(F)|\ge1}\sum_{\substack{H_s,H_t \in\mathcal{H}_n \\ H_s \cap H_t \simeq F}}\Cov[I_s,I_t] \nonumber\\
    &=\sum_{F\subseteq H:|V(F)|\ge1}\sum_{\substack{H_s,H_t \in\mathcal{H}_n \\ H_s \cap H_t \simeq F}}[\rho_n^{2|E(H)|-|E(F)|}t(H_s\cup H_t,W)-(\rho_n^{|E(H)|}t(H,W))^2] , 
    \label{eq:var-1}
\end{align} 
where $H_s\cup H_t=(V(H_s)\cup V(H_t),E(H_s)\cup E(H_t)).$ The second last line follows since for any two copies $H_s,H_t\in\mathcal{H}_n,$ the graph $H_s\cap H_t=(V(H_s)\cap V(H_t),E(H_s)\cap E(H_t))$ is isomorphic to some subgraph $F\subseteq H.$\footnote{For two graphs $G_1$ and $G_2,$ the notation $G_1\simeq G_2$ will mean that $G_1$ and $G_2$ are isomorphic.}

To show the upper bound and lower bound statements in \eqref{eq:VarGn}, we need the following lemma, which essentially states that for each subgraph $F\subseteq H,$ one can find a suitable number of copies $H_s,H_t$ of $H$ such that $H_s\cap H_t\simeq F$ and $H_s\cup H_t$ has positive homomorphism density $t(H_s\cup H_t,W)>0$ with respect to $W.$ The lemma is proved in Appendix \ref{sec:variancepf}.

\begin{lem}
Let $H$ be a subgraph with $|E(H)|\ge 1$ and $t(H,W)>0.$ Let $F\subseteq H$ be a subgraph of $H$ with $|V(F)|\ge1.$ Then there exist copies $H_s,H_t\in\mathcal{H}_n$ of $H$ in $K_n$ such that $H_s\cap H_t\simeq F$ and $t(H_s\cup H_t,W)\ge t(H,W)^2 > 0.$ Moreover, the number of such pairs $H_s,H_t$ is $\asymp_H(n^{2|V(H)|-|V(F)|}).$
\label{lem:positivity}
\end{lem}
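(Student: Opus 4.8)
The final statement above is Lemma \ref{lem:positivity}, which asserts the existence of $\Omega_H(n^{2|V(H)|-|V(F)|})$ ordered pairs of copies $H_s, H_t$ of $H$ in $K_n$ whose intersection is isomorphic to $F$ and whose union $H_s \cup H_t$ satisfies $t(H_s \cup H_t, W) \geq t(H,W)^2 > 0$. This is the matching lower bound needed to complete the variance estimate in Lemma \ref{lem:meanvar}.

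My plan is as follows. First I would fix a subgraph $F \subseteq H$ with $|V(F)| \geq 1$ and construct a canonical pair of copies $H_s, H_t$ by gluing two disjoint copies of $H$ along a common embedding of $F$: take vertex set $V(H_s) \cup V(H_t)$ of size $2|V(H)| - |V(F)|$, identify the $F$-part, and let the edge sets be the two copies of $E(H)$. The key point is the factorization of homomorphism densities over the shared vertices: since $H_s \cup H_t$ is obtained by gluing $H_s$ and $H_t$ along $F$, conditioning on the labels $\{U_a : a \in V(F)\}$ makes the two ``halves'' independent, so
\begin{align*}
t(H_s \cup H_t, W) = \int_{[0,1]^{|V(F)|}} \left( \E\left[ \prod_{(a,b) \in E(H)} W(U_a, U_b) \,\Big|\, (U_c)_{c \in V(F)} \right] \right)^2 \prod_{c \in V(F)} \mathrm{d} U_c,
\end{align*}
where the inner conditional expectation is the ``rooted'' homomorphism density of $H$ with $F$ as the root. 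By Jensen's inequality (or Cauchy--Schwarz), this integral of a square is at least the square of the integral, which is exactly $t(H,W)^2$. Since $t(H,W) > 0$ by assumption, this gives $t(H_s \cup H_t, W) \geq t(H,W)^2 > 0$, establishing the positivity claim.

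For the counting claim, I would observe that we are free to choose the embedding of $H_s$ into $[n]$ (that is $(n)_{|V(H)|} \asymp_H n^{|V(H)|}$ choices, up to automorphism factors), then choose which sub-copy of $F$ inside $H_s$ to glue along (a constant number of choices depending only on $H$), and then choose the remaining $|V(H)| - |V(F)|$ vertices of $H_t$ among the unused labels (that is $(n - |V(H)|)_{|V(H)| - |V(F)|} \asymp_H n^{|V(H)| - |V(F)|}$ choices). Multiplying gives $\asymp_H n^{2|V(H)| - |V(F)|}$ ordered pairs, all of which have the desired gluing structure and hence all satisfy $t(H_s \cup H_t, W) \geq t(H,W)^2$. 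Combined with the covariance lower bound $\Cov[I_s, I_t] = \rho_n^{2|E(H)| - |E(F)|} t(H_s \cup H_t, W) - \rho_n^{2|E(H)|} t(H,W)^2 \geq \rho_n^{2|E(H)| - |E(F)|} t(H,W)^2 (1 - \rho_n^{|E(F)|}) \gtrsim_{H,W} \rho_n^{2|E(H)| - |E(F)|}$ (valid since $\rho_n \ll 1$ and $|E(F)|$ could be $0$, in which case one must be slightly more careful but the term is then a product of genuinely independent indicators with a strictly positive contribution), one gets the lower bound $\Var[X(H,G_n)] \gtrsim_{H,W} n^{2|V(H)| - |V(F)|} \rho_n^{2|E(H)| - |E(F)|}$ for each $F$, and taking the maximum over $F$ finishes Lemma \ref{lem:meanvar}.

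The main obstacle I anticipate is not conceptual but bookkeeping: one must make sure that when summing the covariances over \emph{all} pairs (not just the favorable glued ones), no cancellation destroys the lower bound — i.e., that the ``bad'' pairs with small or even slightly negative $t(H_s \cup H_t, W) - \rho_n^{|E(F)|} t(H,W)^2$ contributions do not overwhelm the positive ones. Here the cleanest route is to not sum over all $F$ at once but rather, for the particular $F$ achieving the max in \eqref{eq:VarGn}, restrict attention to the glued pairs constructed above (whose covariances are all $\gtrsim_{H,W} \rho_n^{2|E(H)| - |E(F)|}$ and positive) and use the fact that $\Cov[I_s, I_t] \geq 0$ for these, while bounding all other terms: for a fixed $F$, grouping copies by their shared structure, pairs that share \emph{strictly more} than an $F$-worth of vertices contribute at a strictly smaller order in $n$ (more shared vertices $\Rightarrow$ fewer such pairs) and can be absorbed, and disjoint or small-overlap pairs have nonnegative covariance. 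So the delicate step is organizing this dominance argument; the homomorphism-density positivity via Jensen is the easy part.
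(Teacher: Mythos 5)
Your construction is the same as the paper's: glue two copies of $H$ along a common embedding of $F$, use conditional independence given the shared labels, and apply Jensen/Cauchy--Schwarz to get $t(H_s\cup H_t,W)\ge t(H,W)^2$, then count the glued pairs. One slip: your displayed identity for $t(H_s\cup H_t,W)$ is not an equality when $E(F)\neq\emptyset$, because the edges of the shared copy of $F$ appear once in $E(H_s)\cup E(H_t)$ but twice in the square of the rooted density $t_{V(F)}(\cdot;H,W)$; the correct identity is $t(H_s\cup H_t,W)=\int_{[0,1]^{|V(F)|}} t_{V(F)}^2\big/\prod_{(a,b)\in E(F)}W(x_a,x_b)\,\prod_a\mathrm{d}x_a$, which is how the paper writes it. Since $W\le 1$, your expression is a lower bound for the true one, so the chain $t(H_s\cup H_t,W)\ge\int t_{V(F)}^2\ge(\int t_{V(F)})^2=t(H,W)^2$ still goes through and the conclusion is unaffected. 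The cancellation concerns in your last paragraph belong to the variance lower bound in Lemma \ref{lem:meanvar} rather than to this lemma; for the statement at hand, the gluing construction and the count $O_H(n^{2|V(H)|-|V(F)|})$ (choose the $|V(F)|$ shared vertices, then the two disjoint remainders) are all that is needed.
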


We begin by decomposing \eqref{eq:var-1} according to the following 3 parts:  
\begin{align}
    &\Var[X_n(H,G_n)]\nonumber\\
    =&\,\sum_{F\subseteq H:|V(F)|=1}\sum_{\substack{ H_s,H_t \in\mathcal{H}_n \\ H_s \cap H_t \simeq F}}[\rho_n^{2|E(H)|-|E(F)|}t(H_s\cup H_t,W)-(\rho_n^{|E(H)|}t(H,W))^2]\nonumber\\
    &\,+\sum_{\substack{F\subseteq H:|V(F)|\ge2,\\|E(F)|=0}}\sum_{\substack{H_s,H_t \in\mathcal{H}_n \\ H_s \cap H_t \simeq F}}[\rho_n^{2|E(H)|-|E(F)|}t(H_s\cup H_t,W)-(\rho_n^{|E(H)|}t(H,W))^2]\nonumber\\
    &\,+\sum_{\substack{F\subseteq H:|V(F)|\ge2,\\|E(F)|\ge1}}\sum_{\substack{H_s,H_t \in\mathcal{H}_n \\ H_s \cap H_t \simeq F}}[\rho_n^{2|E(H)|-|E(F)|}t(H_s\cup H_t,W)-(\rho_n^{|E(H)|}t(H,W))^2]\nonumber\\
    =:&\,T_1+T_2+T_3.
    \label{eq:var-decomp}
\end{align}
We begin by considering $T_1$. Since in this case $|E(F)|=0,$ we may factor out $\rho_n^{2|E(H)|}$ to obtain
\begin{equation}
    T_1=\rho_n^{2|E(H)|}\sum_{\substack{H_s,H_t\in\mathcal{H}_n\\H_s\cap H_t\simeq K_1}}[t(H_s\cup H_t,W)-t(H,W)^2] , 
    \label{eq:var-3.2}
\end{equation}
where $K_1$ denotes a single isolated vertex. Now, to identify the structure of $H_s\cup H_t,$   we need the following definition: 
\begin{defn}
Suppose $H = (V(H), E(H))$ with vertices labeled $V(H)  = \{1, 2, \ldots, |V(H)|\}$. Then for $1 \leq a, b \leq |V(H)|$, the $(a, b)$-vertex join of two copies of $H,$ denoted by $H\bigoplus_{a,b} H$, is the graph obtained by identifying the $a$-th vertex of the first copy $H$ with the $b$-th vertex of the second copy $H$ (see Figure \ref{fig:v_join}).
\label{defn:v_join}
\end{defn}

\begin{figure}
    \centering
    \includegraphics[scale=0.65]{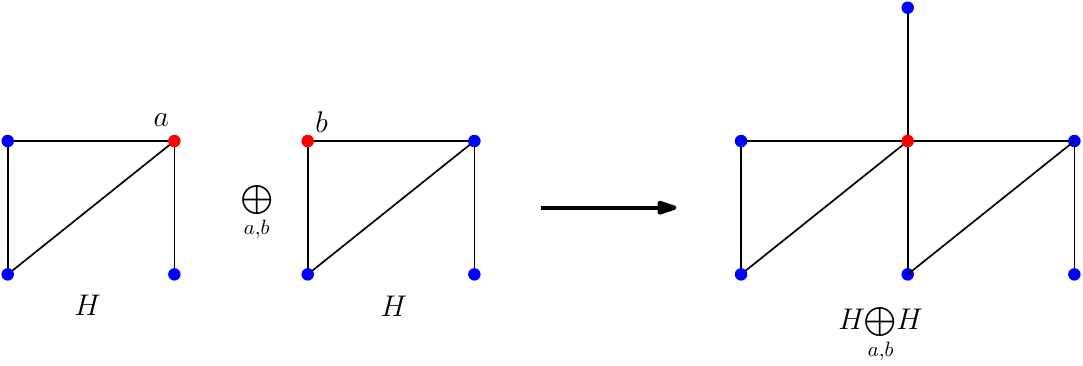}
    \caption{The $(a, b)$-{\it vertex join} of the 2 copies of $H$. }
    \label{fig:v_join}
\end{figure}

Note that if $H_s,H_t \in\mathcal{H}_n$ such that  $H_s\cap H_t\simeq K_1,$ then $H_s\cup H_t$ is isomorphic to $H\bigoplus_{a,b}H$ for some choice of $a, b \in V(H).$ Also, observe that for each fixed pair $(a,b)$, the number of ordered pairs $(H_s,H_t)$ such that $H_s\cap H_t\simeq K_1$ is $\asymp_H n^{2|V(H)|-1} $. Hence, 
\begin{align*}
    T_1&=\rho_n^{2|E(H)|}\sum_{\substack{H_s,H_t\in\mathcal{H}_n\\H_s\cap H_t\simeq K_1}}[t(H_s\cup H_t,W)-t(H,W)^2]\\
    &\asymp_{H, W} n^{2|V(H)|-1} \rho_n^{2|E(H)|} \left[\sum_{1\le a,b\le |V(H)|}t\left(H\bigoplus_{a,b} H,W\right)-|V(H)|^2t(H,W)^2\right].
\end{align*}
By \citet[Lemma 5.4]{bhattacharya2021fluctuations} the bracketed term in the above expression is zero if and only if $W$ is $H$-regular. Thus,
\begin{equation}
    T_1\asymp_{H,W}\begin{cases}n^{2|V(H)|-1}\rho_n^{2|E(H)|}&\text{if $W$ is not $H$-regular,}\\0&\text{if $W$ is $H$-regular.}\end{cases}
    \label{eq:T1}
\end{equation}
Next, we consider $T_2$. Since $|E(F)|=0,$ we again factor out $\rho_n^{2|E(H)|}$ to get, 
$$T_2=\sum_{\substack{F\subseteq H:|V(F)|\ge2,\\|E(F)|=0}}\rho_n^{2|E(H)|}\sum_{\substack{H_s,H_t \in\mathcal{H}_n \\ H_s \cap H_t \simeq F}}[t(H_s\cup H_t,W)-t(H,W)^2].$$
For a fixed subgraph $F\subseteq H,$ the number of ordered pairs $(H_s,H_t)$ with $H_s\cap H_t \simeq F$ is $\asymp_H n^{2|V(H)|-|V(F)|}.$ Hence, 
\begin{equation}
    |T_2|\lesssim_{H,W}\sum_{\substack{F\subseteq H:|V(F)|\ge2,\\|E(F)|=0}} n^{2|V(H)|-|V(F)|}\rho_n^{2|E(H)|} \lesssim_{H, W} n^{2|V(H)|-2}\rho_n^{2|E(H)|}.
    \label{eq:T2}
\end{equation}
Finally, we consider $T_3$: 
\begin{equation}
    T_3=\sum_{\substack{F\subseteq H:|V(F)|\ge2,\\|E(F)|\ge1}}\sum_{\substack{H_s,H_t \in\mathcal{H}_n \\ H_s \cap H_t \simeq F}}[\rho_n^{2|E(H)|-|E(F)|}t(H_s\cup H_t,W)-(\rho_n^{|E(H)|}t(H,W))^2].
    \label{eq:T3-1}
\end{equation} 
Recall that for each fixed subgraph $F\subseteq H,$ the number of ordered pairs $(H_s,H_t)$ with $H_s\cap H_t \simeq F$ is $\asymp_H n^{2|V(H)|-|V(F)|}.$ Therefore, if dropping the second term inside the bracket in the above expression and upper bound each of the homomorphism densities $t(H_s\cup H_t,W)$ by 1 gives, 
\begin{equation}
    T_3\lesssim_{H,W}\sum_{\substack{F\subseteq H:|V(F)|\ge2,\\|E(F)|\ge1}}n^{2|V(H)|-|V(F)|}\rho_n^{2|E(H)|-|E(F)|}.
    \label{eq:T3-upper}
\end{equation}
For the matching lower bound, fix $F\subseteq H$ with $|E(F)|\ge1.$ By Lemma \ref{lem:positivity}, for each subgraph $F\subseteq H,$ we can find $\asymp_H n^{2|V(H)|-|V(F)|}$ pairs $(H_s,H_t)$ with $H_s\cap H_t\simeq F$ and $t(H_s\cup H_t,W)>0.$ Restricting the inner sum in \eqref{eq:T3-1} to these pairs, the first term inside the bracket in \eqref{eq:T3-1} is strictly positive. Moreover, since $\rho_n\ll1$ and $|E(F)|\ge1,$
$$(\rho_n^{|E(H)|}t(H,W))^2 \ll \rho_n^{2|E(H)|-|E(F)|}.$$
Consequently,
\begin{equation}
    T_3\gtrsim_{H,W}\sum_{\substack{F\subseteq H:|V(F)|\ge2,\\|E(F)|\ge1}}n^{2|V(H)|-|V(F)|}\rho_n^{2|E(H)|-|E(F)|}.
    \label{eq:T3-lower}
\end{equation}
Combining \eqref{eq:T3-upper} and \eqref{eq:T3-lower}, we conclude
\begin{align}
    T_3 & \asymp_{H,W}\sum_{\substack{F\subseteq H:|V(F)|\ge2,\\|E(F)|\ge1}}n^{2|V(H)|-|V(F)|}\rho_n^{2|E(H)|-|E(F)|} \nonumber \\ 
    & \asymp_{H,W}\max_{F\subseteq H:|E(F)|\ge1}\frac{n^{2|V(H)|}\rho_n^{2|E(H)|}}{n^{ |V(F)|}\rho_n^{|E(F)|}} .
    \label{eq:T3}
\end{align}
Note that, recalling \eqref{eq:T2}, 
\begin{align*} 
\frac{ |T_2| }{T_3} \asymp_{H,W} \min_{F\subseteq H:|E(F)|\ge1} n^{|V(F)|-2}\rho_n^{|E(F)|} \lesssim_{H} \rho_n \ll 1 . 
 \end{align*} 
Hence, $|T_2| \ll T_3$ and combining \eqref{eq:T1} and \eqref{eq:T3} with \eqref{eq:var-decomp} gives the following: 
\begin{itemize}

\item If $W$ is not $H$-regular, then 
\begin{align*}
    \Var[X_n(H,G_n)]& \asymp_{H,W} n^{2|V(H)|-1}\rho_n^{2|E(H)|} + \max_{F\subseteq H:|E(F)|\ge1}\frac{n^{2|V(H)|}\rho_n^{2|E(H)|}}{n^{ |V(F)|}\rho_n^{ |E(F)|}} \\
    &\asymp_{H,W}\max_{F\subseteq H:|V(F)|\ge1}\frac{n^{2|V(H)|}\rho_n^{2|E(H)|}}{n^{|V(F)|}\rho_n^{2|E(F)|}}.
\end{align*} 

\item If  $W$ is $H$-regular,
\begin{align*}
    \Var[X_n(H,G_n)] & \asymp_{H,W}\max_{F\subseteq H:|E(F)|\ge1}\frac{n^{2|V(H)|}\rho_n^{2|E(H)|}}{n^{|V(F)|}\rho_n^{ |E(F)|}}.
\end{align*}
\end{itemize}
This completes the proof of \eqref{eq:VarGn}.\hfill $\Box$

\section{Proof of Proposition \ref{prop:main}} 
\label{sec:conditionalpf}

The proof of Proposition \ref{prop:main} will use Stein's method based on  dependency graphs. We begin by defining the notion of a conditional dependency graph. 

\begin{defn}[Conditional dependency graph]
    Let $\{X_v\}_{v\in V}$ be a family of random variables (defined on some common probability space) indexed by a finite set $V$ and $\mathcal{F}$ be a $\sigma$-algebra. Then a graph $\mathcal G$ with vertex set $V$ is said to be a conditional dependency graph for the collection $\{X_v\}$ given $\mathcal{F}$ if the following holds: For any two subsets of vertices $A, B \subseteq V$ such that there is no edge in $\mathcal{G}$ from any vertex in $A$ to any vertex in $B,$ then the colelctions $\{X_v\}_{v\in A}$ and $\{X_v\}_{v\in B}$ are conditionally independent given $\mathcal{F}.$ 
\end{defn}

For a dependency graph $\mathcal G$ and $u \in V$, denote by $\bar{N}_\mathcal{G}(u)$ the set neighbors of $u$ in $\mathcal{G}$ and $u$ itself. Also,  denote $\bar{N}_\mathcal{G}(u,v):=\bar{N}_\mathcal{G}(u)\cup\bar{N}_\mathcal{G}(v).$ We will use the following version of Stein's method based on dependency graph.

\begin{prop}[{\cite[Theorem 6.33]{JLR}}]
\label{prop:Stein}
    Suppose $\{X_v\}_{v\in V}$ be a family of random variables with conditional dependency graph $\mathcal{G}$ given $\mathcal{F}.$  Assume $\E[X_v|\mathcal{F}]=0$ almost surely, for all $v\in V.$ Let $$W=\frac{1}{\sigma(\mathcal{F})}\sum_{v\in V}X_v,$$ 
    where $(\sigma(\mathcal{F}))^2=\Var[\sum_{v\in V}X_v\mid\mathcal{F}]$ is the conditional variance. Assume, for all $u,v\in V,$ 
    \begin{equation}
        \sum_{v\in V}\E[|X_v|\mid\mathcal{F}] \le R(\mathcal{F})\quad\text{and}\quad\sum_{w\in\bar{N}_\mathcal{G}(u,v)}\E[|X_w|\mid X_u,X_v,\mathcal{F}] \le Q(\mathcal{F}) , 
        \label{eq:lem.asm}
    \end{equation}
    almost surely. Then, for $Z \sim N(0, 1)$,  $$d_\mathrm{Wass}(W\mid\mathcal{F},Z)\lesssim \frac{R(\mathcal{F})(Q(\mathcal{F}))^2}{(\sigma(\mathcal{F}))^3}$$ 
    almost surely.
\end{prop}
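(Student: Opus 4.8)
The plan is to deduce this conditional statement from the unconditional Stein's-method bound for dependency graphs, namely the classical version of \cite[Theorem 6.33]{JLR}, by passing to a regular conditional probability given $\mathcal{F}$. Since the underlying probability space may be taken to be standard (Polish), a regular conditional distribution $\P_\omega(\cdot) := \P(\cdot \mid \mathcal{F})(\omega)$ exists for almost every $\omega$. The key observation is that, once $\omega$ is frozen, the family $\{X_v\}_{v\in V}$ becomes an ordinary collection of random variables on the probability space $(\Omega, \mathcal{A}, \P_\omega)$, and every hypothesis of the conditional statement translates verbatim into a hypothesis of the unconditional theorem under $\P_\omega$.

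First I would set up the measure-theoretic dictionary, valid for almost every $\omega$: (i) by the definition of a conditional dependency graph, $\mathcal{G}$ is an ordinary dependency graph for $\{X_v\}$ under $\P_\omega$; (ii) the hypothesis $\E[X_v \mid \mathcal{F}]=0$ becomes $\E_{\P_\omega}[X_v]=0$; (iii) the conditional variance satisfies $\Var_{\P_\omega}\big[\sum_v X_v\big] = (\sigma(\mathcal{F}))^2(\omega)$, so that $W = \frac{1}{\sigma(\mathcal{F})(\omega)}\sum_v X_v$ has unit variance under $\P_\omega$; and (iv) the bounds in \eqref{eq:lem.asm} become $\sum_v \E_{\P_\omega}[|X_v|] \le R(\mathcal{F})(\omega)$ and $\sum_{w\in\bar{N}_{\mathcal{G}}(u,v)} \E_{\P_\omega}[|X_w|\mid X_u,X_v] \le Q(\mathcal{F})(\omega)$. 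For (i) and (iv) I would invoke standard properties of regular conditional distributions: the conditional independence packaged in the definition of $\mathcal{G}$ yields genuine independence under $\P_\omega$ off a single null set (using that the relevant $\sigma$-algebras are countably generated), and the nested conditioning on $\sigma(X_u,X_v)\vee\mathcal{F}$ disintegrates, for almost every $\omega$, into conditioning on $\sigma(X_u,X_v)$ under $\P_\omega$.

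Next I would identify the conditional Wasserstein distance of Definition \ref{defn:cond_wass} with an ordinary Wasserstein distance under $\P_\omega$. Since $\E[g(W)\mid\mathcal{F}](\omega)=\E_{\P_\omega}[g(W)]$ for every bounded measurable $g$, we have, for almost every $\omega$,
\[
d_{\mathrm{Wass}}(W\mid\mathcal{F},\,Z)(\omega) = \sup_{g\in\mathcal{L}} \big| \E_{\P_\omega}[g(W)] - \E[g(Z)] \big|,
\]
which is the unconditional Wasserstein distance between the law of $W$ under $\P_\omega$ and $N(0,1)$. Applying the unconditional form of \cite[Theorem 6.33]{JLR} to $\{X_v\}$ under $\P_\omega$, together with the dictionary above, then yields the bound $d_{\mathrm{Wass}}(W\mid\mathcal{F},\,Z)(\omega) \lesssim R(\mathcal{F})(\omega)(Q(\mathcal{F})(\omega))^2/(\sigma(\mathcal{F})(\omega))^3$ for almost every $\omega$, which is precisely the claimed almost-sure inequality.

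I expect the main obstacle to be purely measure-theoretic bookkeeping rather than any new probabilistic input: one must ensure that all the conditional quantities above are realized simultaneously, off a single $\P$-null set, by one regular conditional distribution, so that the pointwise application of the unconditional theorem is legitimate almost surely. The two steps requiring the most care are confirming that the conditional-independence property of $\mathcal{G}$ is equivalent to unconditional independence under $\P_\omega$ for almost every $\omega$, and that the iterated conditioning in the second bound of \eqref{eq:lem.asm} disintegrates correctly; both follow from the disintegration theorem on Polish spaces. An alternative route that avoids regular conditional distributions is to re-run the Stein-equation argument of \cite[Theorem 6.33]{JLR} with every expectation replaced by a conditional expectation given $\mathcal{F}$, which is viable because that proof uses only linearity, the tower property, the factorization of the now-conditional expectations across conditionally independent blocks, and the deterministic bounds on the Stein solution.
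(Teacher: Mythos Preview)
Your proposal is correct, but it takes a different route from the paper. The paper does not give a standalone proof; it simply records in a remark that the unconditional statement in \cite[Theorem 6.33]{JLR} carries over by ``repeating the proof verbatim (with expectations replaced by conditional expectations)''---exactly the alternative route you mention in your final sentence. Your primary argument instead passes to a regular conditional distribution $\P_\omega$ and invokes the unconditional theorem as a black box. This buys you modularity: you never reopen the Stein-equation computation, only check that the hypotheses survive disintegration. The cost is the measure-theoretic bookkeeping you flag (a single null set outside which conditional independence becomes $\P_\omega$-independence, the iterated conditioning on $\sigma(X_u,X_v)\vee\mathcal F$ disintegrates correctly, and the uncountable supremum defining $d_{\mathrm{Wass}}(W\mid\mathcal F,Z)$ can be realized via a countable dense family of Lipschitz functions). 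The paper's approach sidesteps all of this at the price of re-examining the Stein argument line by line, which is routine here since that proof uses only linearity, the tower property, and factorization across (conditionally) independent blocks.
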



\begin{remark} Technically, \citet[Theorem 6.33]{JLR} establishes the above result in the unconditional setting. The conditional result stated above follows by repeating the proof verbatim (with expectations replaced by conditional expectations). 
\end{remark}

With the above preparations we can now proceed with the proof of Proposition \ref{prop:main}. To begin with, let $\mathcal{H}_n=\{H_1,H_2,\ldots,H_M\}$ be the collection of copies of all $H$ in the complete graph $K_n$. For $1\le s \le M$, define $I_s:=\bm{1}\{H_j\subseteq G_n\}$ and 
$$X_s:=I_s-\mathbb{E}[I_s \mid \CF(\bm{U}_n) ] =I_s-\rho_n^{|E(H)|}\prod_{(a,b)\in E(H_s)}W(U_a,U_b).$$
Clearly, $\E[X_s \mid \mathcal F] = 0$, for all $1 \leq s \leq M$. Recalling the definition of $\Delta_1(H,G_n)$ from \eqref{eq:XHGn12}, note that 
$$\Delta_1(H,G_n)=\sum_{s=1}^{M}X_s.$$ 
Construct a conditional dependency graph $\mathcal G$ of the collection of random variables $\{X_s :H_s \in\mathcal{H}_n\}$ given $\sigma(\bm U_n)$ on the vertex set $\{1,2,\ldots,M\}$ as follows:  Connect the edge $(s,t)$, for $1 \leq s \ne t \leq M$ in $\mathcal{G}_n$ if and only if $|E(H_s)\cap E(H_t)|\ge 1$.  

With this dependency graph, we now bound the terms appearing in Proposition \ref{prop:Stein}. We begin the first term in \eqref{eq:lem.asm}. 

\begin{lem}\label{lm:Rn} For $X_s$ as defined above, 
\begin{align}\label{eq:Rn}
\sum_{s = 1}^M \E[|X_s| \mid \CF(\bm U_n)] \lesssim_{H,W} \E[X_n(H, G_n)] , 
\end{align} 
almost surely. 
\end{lem}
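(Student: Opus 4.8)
The claim to prove is Lemma \ref{lm:Rn}: an almost-sure bound on $\sum_{s=1}^M \E[|X_s| \mid \CF(\bm U_n)]$ by a constant (depending on $H$ and $W$) times $\E[X(H,G_n)]$. The plan is to bound each conditional absolute moment $\E[|X_s| \mid \CF(\bm U_n)]$ pointwise and then sum. Recall $X_s = I_s - \rho_n^{|E(H)|}\prod_{(a,b)\in E(H_s)} W(U_a, U_b)$, where $I_s = \bm 1\{H_s \subseteq G_n\}$ is Bernoulli conditionally on $\CF(\bm U_n)$ with conditional mean $p_s := \rho_n^{|E(H)|}\prod_{(a,b)\in E(H_s)} W(U_a,U_b)$. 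For a centered Bernoulli random variable with parameter $p_s$ we have $\E[|X_s| \mid \CF(\bm U_n)] = 2 p_s(1-p_s) \le 2 p_s$. So the first step is simply $\E[|X_s| \mid \CF(\bm U_n)] \le 2\rho_n^{|E(H)|}\prod_{(a,b)\in E(H_s)} W(U_a,U_b)$.

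\textbf{Summation step.} Next I would sum over all $M = X(H,K_n)$ copies $H_s \in \mathcal H_n$. Since $0 \le W \le \|W\|_\infty$, each product is bounded by $\|W\|_\infty^{|E(H)|}$, giving
\begin{align*}
\sum_{s=1}^M \E[|X_s|\mid \CF(\bm U_n)] \le 2 \rho_n^{|E(H)|} \|W\|_\infty^{|E(H)|} \cdot |\mathcal H_n| \lesssim_H \rho_n^{|E(H)|} n^{|V(H)|},
\end{align*}
using $|\mathcal H_n| = \frac{(n)_{|V(H)|}}{|\Aut(H)|} = O_H(n^{|V(H)|})$. Finally, by \eqref{eq:EHGn} in Lemma \ref{lem:meanvar}, $\E[X(H,G_n)] \asymp_{H,W} n^{|V(H)|}\rho_n^{|E(H)|}$, which rewrites the right-hand side as $\lesssim_{H,W} \E[X(H,G_n)]$, completing the proof.

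\textbf{Main obstacle.} There is essentially no serious obstacle here — this lemma is a crude moment bound and the only subtlety is the identification of the conditional first absolute moment of a centered Bernoulli, which is elementary. One minor point worth stating carefully is that the bound must hold \emph{almost surely} in $\bm U_n$, so all inequalities should be phrased pointwise in the $U_i$'s using $\|W\|_\infty < \infty$ (the graphon is bounded, being $[0,1]$-valued or $\R_{\ge 0}$-valued and bounded by assumption); no expectation over $\bm U_n$ is taken until the very last comparison with $\E[X(H,G_n)]$, which is a deterministic sequence. If one wanted a slightly tighter constant one could keep the factor $(1-p_s)$, but since $p_s \le \rho_n^{|E(H)|}\|W\|_\infty^{|E(H)|} \to 0$ this is immaterial for the stated $\lesssim_{H,W}$ bound.
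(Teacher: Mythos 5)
Your proof is correct and follows essentially the same route as the paper's: compute $\E[|X_s|\mid\CF(\bm U_n)]=2p_s(1-p_s)\le 2p_s\le 2\rho_n^{|E(H)|}\|W\|_\infty^{|E(H)|}$ pointwise, multiply by $M=O_H(n^{|V(H)|})$, and invoke \eqref{eq:EHGn} to identify $n^{|V(H)|}\rho_n^{|E(H)|}\asymp_{H,W}\E[X(H,G_n)]$. No gaps.
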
 

\begin{proof} For $1 \leq s \leq M$, 
\begin{align}
    \E[|X_s|\mid \CF(\bm{U}_n) ] &=2 \E[I_s \mid \CF(\bm{U}_n) ] (1-\E[I_s \mid \CF(\bm{U}_n) ] ) \nonumber \\
    &=2\rho_n^{|E(H)|}\prod_{(a,b )\in E(H_s)}W(U_a,U_b)\left(1-\rho_n^{|E(H)|}\prod_{(a,b)\in E(H_s)}W(U_a,U_b)\right) \nonumber \\
    & \lesssim_{H,W}\rho_n^{|E(H)|} , 
    \label{eq:Xs}
    \end{align}
almost surely. Since $M= |\mathcal H_n|=\binom{n}{|V(H)|}\frac{|V(H)|!}{|\Aut(H)|}\lesssim_{H,W} n^{|V(H)|}$,
\begin{align}
\sum_{s=1}^M\E[|X_s|\mid \CF(\bm{U}_n)]\lesssim_{H,W} n^{|V(H)|}\rho_n^{|E(H)|}\asymp_{H,W}\E[X_n(H,G_n)] , 
\label{eq:RnHW}
\end{align}
where the last step uses \eqref{eq:EHGn}. Thus, Lemma \ref{lm:Rn} follows.  
\end{proof}

Next, we consider the second term in \eqref{eq:lem.asm}. 

\begin{lem}\label{lm:Qn} For all $1 \leq s < t \leq M$
\begin{align}\label{eq:Qn}
\sum_{w\in\bar{N}_\mathcal{G}(s,t)}\E[|X_w|\mid X_s,X_t,\CF(\bm U_n)] \lesssim_{H, W} \frac{\E[X_n(H, G_n)]}{\Phi_H} ,
\end{align}
almost surely, where
$$\Phi_H:=\min\{\E[X_n(F,G_n)] : F\subseteq H,|E(F)|\ge 1\}\asymp\min_{F\subseteq H,\,|E(F)|\ge 1}n^{|V(F)|}\rho_n^{|E(F)|}.$$ 
\end{lem}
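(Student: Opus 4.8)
The plan is to verify the two conditions in Proposition \ref{prop:Stein} for the collection $\{X_s\}$ and the dependency graph $\mathcal G$; the first (the $R(\mathcal F)$ bound) is exactly Lemma \ref{lm:Rn}, so the task here is only the second. I would establish \eqref{eq:Qn} by combining (i) a pointwise bound on each conditional expectation $\E[|X_w| \mid X_s, X_t, \CF(\bm U_n)]$ that decays in the number of edges of $H_w$ lying \emph{outside} $E(H_s)\cup E(H_t)$, with (ii) a routine overlap-counting estimate that groups the copies $H_w$ by the isomorphism type of their intersection with $H_s\cup H_t$. Throughout I write $\|W\|_\infty<\infty$ for the sup-norm of $W$.

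\emph{Step 1 (pointwise conditional bound).} I would first show that for every copy $H_w$,
$$\E\big[|X_w| \mid X_s, X_t, \CF(\bm U_n)\big] \;\lesssim_{H,W}\; \rho_n^{\,|E(H_w)\setminus (E(H_s)\cup E(H_t))|} \qquad \text{a.s.}$$
Write $E_w := E(H_w)\setminus(E(H_s)\cup E(H_t))$. Conditionally on $\CF(\bm U_n)$ the edge indicators $\{a_e\}$ are mutually independent, $X_s$ is a function of $\{a_e:e\in E(H_s)\}$, and $X_t$ of $\{a_e:e\in E(H_t)\}$; hence the factor $\prod_{e\in E_w}a_e$ of $I_w=\prod_{e\in E(H_w)}a_e$ is, given $\CF(\bm U_n)$, independent of the pair $(X_s,X_t)$ and of the remaining factors of $I_w$. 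Taking conditional expectations and bounding those remaining factors by $1$ gives $\E[I_w \mid X_s,X_t,\CF(\bm U_n)] \le \prod_{e\in E_w}\rho_n W(U_e) \le (\rho_n\|W\|_\infty)^{|E_w|}$. Since $|X_w|\le I_w+\E[I_w\mid\CF(\bm U_n)]$, and $\E[I_w\mid\CF(\bm U_n)]$ is $\CF(\bm U_n)$-measurable and bounded by $(\rho_n\|W\|_\infty)^{|E(H)|}\le(\rho_n\|W\|_\infty)^{|E_w|}$ (using $\rho_n<1$ and $|E_w|\le|E(H)|$), the claim follows.

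\emph{Step 2 (counting the overlaps).} For $w\in\bar N_\mathcal{G}(s,t)$ set $F_w := H_w\cap(H_s\cup H_t)$, the graph on $V(H_w)\cap(V(H_s)\cup V(H_t))$ with edge set $E(H_w)\cap(E(H_s)\cup E(H_t))$; it is isomorphic to a subgraph of $H$, and since $w\in\bar N_\mathcal{G}(s)\cup\bar N_\mathcal{G}(t)$ one always has $|E(F_w)|\ge 1$ (if $w=s$ then $F_w=H_s$ has $|E(H)|\ge1$ edges, while if $E(H_w)$ meets $E(H_s)$ or $E(H_t)$ then so does $E(F_w)$). By Step 1, $\E[|X_w|\mid X_s,X_t,\CF(\bm U_n)] \lesssim_{H,W} \rho_n^{|E(H)|-|E(F_w)|}$. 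Grouping by the isomorphism type $F_0$ of $F_w$: for fixed $H_s,H_t$ and fixed $F_0\subseteq H$ with $|E(F_0)|\ge1$, the number of copies $H_w$ with $F_w\simeq F_0$ is $O_H(n^{|V(H)|-|V(F_0)|})$, since the $|V(F_0)|$ overlap vertices must lie in the $O_H(1)$-sized set $V(H_s)\cup V(H_t)$ while the remaining $|V(H)|-|V(F_0)|$ vertices are unconstrained. Summing over $w$ within a group and then over the finitely many subgraph types $F_0$ gives
$$\sum_{w\in\bar N_\mathcal{G}(s,t)}\E[|X_w|\mid X_s,X_t,\CF(\bm U_n)] \;\lesssim_{H,W}\; \sum_{F_0\subseteq H:\,|E(F_0)|\ge1}\frac{n^{|V(H)|}\rho_n^{|E(H)|}}{n^{|V(F_0)|}\rho_n^{|E(F_0)|}} \;\lesssim_{H,W}\; \max_{F\subseteq H:\,|V(F)|\ge1}\frac{n^{|V(H)|}\rho_n^{|E(H)|}}{n^{|V(F)|}\rho_n^{|E(F)|}},$$
which by \eqref{eq:EHGn} and the definition of $\Phi_H$ equals (up to an $H,W$-constant) $\E[X(H,G_n)]/\Phi_H$, as required.

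The main obstacle is Step 1, and specifically handling the conditioning correctly: $H_w$ typically shares edges with $H_s$ and/or $H_t$, so $X_w$ is \emph{not} conditionally independent of $(X_s,X_t)$ given $\CF(\bm U_n)$. The resolution is to peel off exactly the ``free'' edges $E_w$, which are conditionally independent of everything relevant, and to dominate the shared edges crudely by $1$. After that the argument is the same overlap bookkeeping used in the proof of Lemma \ref{lem:meanvar}; the only point requiring care is that the exponent produced by Step 1 involves $E(H_s)\cup E(H_t)$ (not just $E(H_s)$), so the counting must be organized around $F_w=H_w\cap(H_s\cup H_t)$ in order for the two factors to combine into $n^{|V(H)|-|V(F_0)|}\rho_n^{|E(H)|-|E(F_0)|}$.
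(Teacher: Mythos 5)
Your proposal is correct and follows essentially the same route as the paper: bound $\E[|X_w|\mid X_s,X_t,\CF(\bm U_n)]$ by peeling off the edges of $H_w$ outside $E(H_s)\cup E(H_t)$ (which are conditionally independent of $(X_s,X_t)$ given $\CF(\bm U_n)$) to get $\lesssim_{H,W}\rho_n^{|E(H)|-|E(F)|}$ with $F\simeq H_w\cap(H_s\cup H_t)$, then count $O_H(n^{|V(H)|-|V(F)|})$ copies per overlap type and identify the resulting maximum with $\E[X(H,G_n)]/\Phi_H$ via \eqref{eq:EHGn}. Your Step 1 is in fact slightly more careful than the paper's (which states the conditional expectation as an equality rather than the inequality obtained by bounding the shared-edge indicators by one), and your observation that $w\in\bar N_{\mathcal G}(s,t)$ forces $|E(F_w)|\ge 1$ matches the paper's restriction of the sum to subgraphs with at least one edge.
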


\begin{proof} Fix $H_s,H_t \in\mathcal{H}_n$, for $1 \leq s < t \leq M$. 
For every $H_w \in\mathcal{H}_n,$ define $H_{w,\{s,t\} }= H_w \cap (H_s\cup H_t).$ Now, observe that $H_{w, \{s, t\}}$ is isomorphic to some subgraph $F\subseteq H$, since $H_w$ is a copy of $H.$ Then, almost surely, 
\begin{align}\label{eq:Xwst}
\E[|X_w|\mid X_s,X_t, \CF(\bm U_n)] & \le\E[I_w\mid I_s,I_t,\CF(\bm U_n)]+ \rho_n^{|E(H)|}\prod_{(a,b)\in E(H_w)}W(U_a,U_b) \nonumber \\ 
&=\rho_n^{|E(H)|-|E(F)|}\prod_{(a,b)\in E(H_w)\setminus E(H_{w,\{s,t\}})}W(U_a,U_b)\nonumber\\
&\quad\quad\quad\quad\quad\quad\quad\quad\quad\quad+\rho_n^{|E(H)|}\prod_{(a,b)\in E(H_w)}W(U_a,U_b)\nonumber\\
&\le \rho_n^{|E(H)|-|E(F)|} + \rho_n^{|E(H)|} \nonumber\\
&\lesssim_{H,W}\rho_n^{|E(H)|-|E(F)|}, 
\end{align} 
since $\rho_n \ll 1$.  If we fix a subgraph $F\subseteq H,$ then the number of choices of $w \in\{1,2,\ldots,M\}$ such that $H_{w,\{s,t\}}$ is isomorphic to $F$ is $O_H(n^{|V(H)|-|V(F)|})$. Furthermore, note that $w\in\bar{N}_{\mathcal{G}}(s,t)$ if and only if $|E(H_{w,\{s,t\}})|>0.$ Hence, using \eqref{eq:Xwst}, the LHS of \eqref{eq:Qn} can be bounded as: 
\begin{align*}
    \sum_{w\in\bar{N}_\mathcal{G}(s,t)}\E[|X_w|\mid X_s,X_t,\CF(\bm U_n)]& \lesssim_{H, W} \sum_{F\subseteq H:|E(F)|\ge1}n^{|V(H)|-|V(F)|}\rho_n^{|E(H)|-|E(F)|} \\
    &\asymp_{H, W} \max_{F\subseteq H:|E(F)|\ge 1}n^{|V(H)|-|V(F)|}\rho_n^{|E(H)|-|E(F)|}\nonumber\\
    &\asymp_{H,W}\frac{\E[X_n(H,G_n)]}{\Phi_H}, 
\end{align*}
almost surely due to \eqref{eq:EHGn}. Thus Lemma \ref{lm:Rn} follows. 
\end{proof}

Finally, to apply Proposition \ref{prop:Stein} we need a lower bound on the conditional variance $\sigma(\CF(\bm U_n))$ where
\begin{align}\label{eq:varianceproposition}
(\sigma(\CF(\bm U_n)))^2 = \Var[\Delta_1(H,G_n)|\CF(\bm U_n)] = \sum\limits_{\substack{H_s,H_t\in\mathcal{H}_n\\|E(H_s)\cap E(H_t)|\ge1}}\Cov[I_s,I_t\mid \CF(\bm{U}_n) ], 
\end{align}
since the conditional covariance is $0$ when $|E(H_s)\cap E(H_t)|=0.$

\begin{lem}\label{lm:conditionalvariance} 
For $\sigma(\CF(\bm U_n))$ as defined before, we have
\begin{align}\label{eq:Sn}
(\sigma(\CF(\bm U_n)))^2 \gtrsim_{H, W}\frac{(\E[X_n(H, G_n)])^2}{\Phi_H} , 
\end{align} 
almost surely. 
\end{lem}

\begin{proof}
Note that if $|E(H_s)\cap E(H_t)|\ge1,$ then
\begin{align*}
    \Cov[I_s,I_t\mid \CF(\bm{U}_n) ]&=\rho_n^{|E(H_s)\cup E(H_t)|}\prod_{(a,b)\in E(H_s)\cup E(H_t)}W(U_{a}, U_{b}) - \rho_n^{2|E(H)|} A_n(s, t)
\end{align*}
where 
$$A_n(s, t) := \prod_{(a,b)\in E(H_s)}W(U_{a}, U_{b})\prod_{(c, d)\in E(H_t)}W(U_{c},U_{d}).$$ 
Note that $H_s\cap H_t=(V(H_s)\cap V(H_t),E(H_s)\cap E(H_t))$ is isomorphic to a subgraph $F$ of $H.$ This means, from \eqref{eq:varianceproposition}, 
\begin{align}\label{eq:sigmavariance}
    &(\sigma(\CF(\bm U_n)))^2=\sum\limits_{\substack{H_s,H_t\in\mathcal{H}_n\\|E(H_s)\cap E(H_t)|\ge1}}\Cov[I_s,I_t\mid \sigma(\bm{U}_n) ] \nonumber \\
    =&\sum\limits_{\substack{F\subseteq H\\|E(F)|\ge1}}  \left\{ \rho_n^{2|E(H)|-|E(F)|}\sum\limits_{\substack{H_s,H_t\in\mathcal{H}_n\\H_s\cap H_t\simeq F}}\prod_{(a,b)\in E(H_s)\cup E(H_t)}W(U_{a}, U_{b}) - \rho_n^{2|E(H)|} \sum\limits_{\substack{H_s,H_t\in\mathcal{H}_n\\ H_s \cap H_t \simeq F}} A_n(s, t) \right\} \nonumber \\ 
    =&\sum\limits_{\substack{F\subseteq H\\|E(F)|\ge1}} \left\{ \rho_n^{2|E(H)|-|E(F)|}Z_n(F) - \rho_n^{2|E(H)|} A_n(F) \right\} , 
\end{align}
almost surely, where
\begin{equation}
    Z_n(F):=\sum\limits_{\substack{H_s,H_t\in\mathcal{H}_n\\H_s\cap H_t\simeq F}}\prod_{(a,b)\in E(H_s)\cup E(H_t)}W(U_{a}, U_{b}) \text{ and } A_n(F)  := \sum\limits_{\substack{H_s,H_t\in\mathcal{H}_n\\ H_s \cap H_t \simeq F}} A_n(s, t) .
    \label{eq:ZnF}
\end{equation} 
Observe that $Z_n(F)$ and $A_n(F)$ are scaled $U$-statistics with kernels of order $2|V(H)|-|V(F)|.$ Hence, from the strong law for $U$-statistics (see \citet[Theorem 6.3.1]{martingale2024convergence}) it follows that $\frac{Z_n(F)}{\E[Z_n(F)]} \to1$ and $\frac{A_n(F)}{\E[A_n(F)]} \to1$ almost surely. Thus,
\begin{align}
    \frac{ \sigma(\CF(\bm{U}_n))^2}{\Var[\Delta_1(H,G_n)]} & =\frac{ \sigma(\CF(\bm{U}_n))^2}{\mathbb{E}[\Var[\Delta_1(H,G_n)\mid \mathcal{F}(\bm{U}_n) ]]} \tag*{(since $\E[\Delta_1(H,G_n))|\mathcal{F}(\bm{U}_n)]=0$)} \nonumber \\ 
    & = \frac{\sum_{F\subseteq H:|E(F)|\ge 1}\left\{ \rho_n^{2|E(H)|-|E(F)|}Z_n(F) - \rho_n^{2|E(H)|} A_n(F) \right\}}{\sum_{F\subseteq H:|E(F)|\ge 1} \left\{ \rho_n^{2|E(H)|-|E(F)|}\E[Z_n(F)] - \rho_n^{2|E(H)|} \E[A_n(F)] \right\} } \nonumber \\ 
    & \stackrel{a.s.} \rightarrow 1 , 
    \label{eq:ratio}
\end{align}
since there are finitely many terms in the sums in both numerator and denominator and for every fixed $F,$ $\frac{Z_n(F)}{\E[Z_n(F)]} \to1$ and $\frac{A_n(F)}{\E[A_n(F)]} \to1$, almost surely. Now,
\begin{equation}
    \Var[\Delta_1(H,G_n)]=\sum_{F\subseteq H:|E(F)|\ge 1} \left\{ \rho_n^{2|E(H)|-|E(F)|}\E[Z_n(F)] - \rho_n^{2|E(H)|} \E[A_n(F)] \right\}.
    \label{eq:var-delta1}
\end{equation}
Fix $F\subseteq H$ with $|E(F)|\ge1.$ Recall that by Lemma \ref{lem:positivity}, for each subgraph $F\subseteq H,$ we can find $\asymp_H(n^{2|V(H)|-|V(F)|})$ pairs $(H_s,H_t)$ with $H_s\cap H_t\simeq F$ and $t(H_s\cup H_t,W)>0.$ Thus, by definition of $A_n(F)$ and $Z_n(F),$ we have
$$\E[A_n(F)]\lesssim_{H,W}\E[Z_n(F)]\asymp_{H,W}n^{2|V(H)|-|V(F)|}.$$
Moreover, since $\rho_n\ll1$ and $|E(F)|\ge1,$
$$\rho_n^{2|E(H)|}\E[A_n(F)] \ll \rho_n^{2|E(H)|-|E(F)|}\E[Z_n(F)] . $$
Thus, the second term in the within the brackets in \eqref{eq:var-delta1} is of smaller order than the first term. Consequently,
\begin{align}
    \Var[\Delta_1(H,G_n)]&  \asymp_{H, W} \sum_{F\subseteq H:|E(F)|\ge 1}n^{2|V(H)|-|V(F)|}\rho_n^{2|E(H)| - |E(F)|}  \nonumber \\
    & \asymp_{H,W} \max_{F\subseteq H:|E(F)|\ge 1}\frac{n^{2|V(H)|}\rho_n^{2|E(H)|}}{n^{2|V(F)|}\rho_n^{2|E(F)|}}\asymp_{H,W} \frac{(\E[X(H, G_n)])^2}{\Phi_H} .
    \label{eq:sigmabound}
\end{align}
Combining \eqref{eq:ratio} and \eqref{eq:sigmabound}, the result in \eqref{eq:Sn} follows. 
\end{proof} 

Now, applying in the bounds \eqref{eq:Rn}, \eqref{eq:Qn}, and \eqref{eq:Sn}, in Proposition \ref{prop:Stein} gives 
$$d_\mathrm{Wass}\left((\sigma(\CF(\bm U_n)))^{-1}\Delta_1(H,G_n)|\CF(\bm U_n),Z\right)\lesssim_{H, W} \Phi_H^{-\frac{1}{2}}, $$
where $Z\sim N(0,1)$. Note that $\Phi_H \gg 1$ when $n\rho_n^{m(H)} \gg 1.$ This completes the proof of Proposition \ref{prop:main}. \hfill $\Box$

\section{Proof of Theorem \ref{thm:regular} }
\label{sec:thm1proof}

Recalling the definition of $\Delta_2(H,G_n)$ from \eqref{eq:XHGn12} note that 
\begin{align*}
    & \Delta_2(H,G_n) \nonumber \\ 
    &=\E[\Delta(H,G_n|\bm U_n)] - \E[\Delta(H, G_n)] \nonumber\\ 
    &=\sum_{1\le i_1<\ldots<i_{|V(H)|}\le n}\sum_{H'\in\mathscr{G}_H(\{i_1,\ldots,i_{|V(H)|}\})}\left(\prod_{(i_s,i_t)\in E(H')}\rho_n W(U_{i_s},U_{i_t})-\rho_n^{|E(H)|}t(H,W)\right) \nonumber \\
    &= \rho_n^{|E(H)|}\sum_{1\le i_1<\ldots<i_{|V(H)|}\le n} h(U_{i_1}, U_{i_2},\ldots, U_{i_{|V(H)|}}) , 
\end{align*} 
where 
$$h(x_{i_1}, x_{i_2},\ldots, x_{i_{|V(H)|}})  = \sum_{H'\in\mathscr{G}_H(\{i_1,\ldots,i_{|V(H)|}\})}\left(\prod_{(i_s,i_t)\in E(H')}W(x_{i_s},x_{i_t})-t(H,W)\right) . $$ 
Hence, defining 
\begin{align}\label{eq:THG}
T(H,G_n) := \frac{1}{\binom{n}{|V(H)|}}\sum_{1\le i_1<\ldots<i_{|V(H)|}\le n} h(U_{i_1}, U_{i_2},\ldots, U_{i_{|V(H)|}}), 
\end{align}
we get 
\begin{align}
\Delta_2(H,G_n) &=\binom{n}{|V(H)|}\rho_n^{|E(H)|}T(H,G_n). 
\label{eq:Delta2Ustatistics}
\end{align} 
Note that $T(H,G_n)$ is a centered $U$-statistics of order $|V(H)|$ with kernel $h.$ In the next lemma we show that the variance of $T(H, G_n)$ scales as $\frac{1}{n^2}$ when the graphon $W$ is $H$-regular. The proof is deferred to Appendix \ref{appendix:proofof_varlemma}. 

\begin{lem}\label{lm:HWU}
$T(H,G_n)$ is first-order degenerate if and only if the graphon $W$ is $H$-regular. Consequently, when $W$ is $H$-regular, $\Var[T(H,G_n)]\lesssim_{H,W}\frac{1}{n^2}.$ 
\end{lem}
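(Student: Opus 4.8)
The plan is to prove Lemma \ref{lm:HWU} by analyzing the Hoeffding decomposition of the $U$-statistic $T(H, G_n)$. Recall that for a centered $U$-statistic of order $k = |V(H)|$ with kernel $h$, the variance decomposes as $\Var[T(H, G_n)] = \binom{n}{k}^{-1} \sum_{c=1}^{k} \binom{k}{c}\binom{n-k}{k-c} \zeta_c$, where $\zeta_c = \Var[h_c(U_1, \ldots, U_c)]$ and $h_c$ is the $c$-th Hoeffding projection. The order of $\Var[T(H, G_n)]$ is governed by the smallest $c$ with $\zeta_c > 0$: if $\zeta_1 > 0$ then $\Var[T(H, G_n)] \asymp n^{-1}$, while if $\zeta_1 = 0$ (first-order degeneracy) then $\Var[T(H, G_n)] = O(n^{-2})$. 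So the whole lemma reduces to the claim that $\zeta_1 = 0$ if and only if $W$ is $H$-regular.

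The first step is to compute the first Hoeffding projection explicitly. We have $h_1(x) := \E[h(U_1, \ldots, U_k) \mid U_1 = x] - \E[h(U_1, \ldots, U_k)]$, and since $h$ is already centered the second term vanishes. Expanding $h$ as the sum over $H' \in \mathscr{G}_H(\{i_1, \ldots, i_k\})$ of $\prod_{(i_s,i_t)\in E(H')} W(x_{i_s}, x_{i_t}) - t(H,W)$, and conditioning on $U_1 = x$: for each copy $H'$ and each position $a \in V(H)$ that the argument $x$ occupies in $H'$, the conditional expectation of the product is exactly $t_a(x, H, W)$ in the notation of Definition \ref{regular}. By symmetry of the sum over all copies, averaging over the positions gives that $h_1(x)$ is a positive constant (namely $\frac{(k-1)!}{|\Aut(H)|} \cdot k$, or some such combinatorial factor coming from \eqref{eq:G_H}) times $\big(\bar t(x, H, W) - t(H,W)\big) = \frac{1}{|V(H)|}\sum_{a=1}^{|V(H)|} t_a(x,H,W) - t(H,W)$. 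Therefore $\zeta_1 = \Var[h_1(U_1)] = 0$ if and only if $\bar t(x, H, W) = t(H,W)$ for almost every $x$, which is precisely the definition of $H$-regularity (using the remark after Definition \ref{regular} that a constant value of $\bar t(\cdot, H, W)$ must equal $t(H,W)$). This establishes the first sentence of the lemma.

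For the second sentence, once we know $\zeta_1 = 0$ under $H$-regularity, the variance formula above collapses to $\Var[T(H,G_n)] = \binom{n}{k}^{-1}\sum_{c=2}^{k}\binom{k}{c}\binom{n-k}{k-c}\zeta_c$. The dominant term is $c = 2$, and $\binom{n}{k}^{-1}\binom{k}{2}\binom{n-k}{k-2} \asymp_{k} n^{-2}$; all higher $c$ contribute at smaller order. Since each $\zeta_c \le \Var[h] \lesssim_{H,W} 1$ (the kernel $h$ is bounded because $W$ is bounded and the number of copies is bounded in terms of $H$ alone), we conclude $\Var[T(H,G_n)] \lesssim_{H,W} n^{-2}$, as claimed.

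The main obstacle — really the only place any care is needed — is the bookkeeping in the first step: correctly identifying that conditioning the product $\prod_{(i_s,i_t)\in E(H')} W$ on one coordinate yields $t_a(x,H,W)$ for the appropriate $a$, and then checking that summing over all copies $H' \in \mathscr{G}_H$ symmetrizes the choice of marked position so that the marked-vertex average $\bar t(x,H,W)$ appears, with the right combinatorial constant. One must also handle the $-t(H,W)$ subtraction carefully (it is there precisely so that $h$ is centered, which is why the unconditional term in $h_1$ drops out). Everything else — the variance formula for $U$-statistics, boundedness of $\zeta_c$, and the asymptotic order of the binomial coefficients — is standard and can be cited or dispatched in a line.
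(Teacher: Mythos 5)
Your proof is correct, and it reduces the lemma to the same essential fact as the paper does: the first--order projection variance of the $U$-statistic $T(H,G_n)$ vanishes precisely when $\bar t(\cdot,H,W)$ is a.e.\ constant. The execution differs in one respect. The paper works with $\xi_1=\Cov[h(U_{i_1},\ldots),h(U_{i_1'},\ldots)]$ for index sets sharing one variable, expands this as a sum over pairs of copies of $H$ glued at a vertex, and then invokes two lemmas from \cite{bhattacharya2021fluctuations} (the vertex-join identity $\sum_{a,b}t(H\bigoplus_{a,b}H,W)$ and its characterization of $H$-regularity) to conclude $\xi_1=0$ iff $W$ is $H$-regular. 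You instead compute the projection $h_1(x)=\E[h\mid U_1=x]$ directly and identify it as $|\mathscr{G}_H|\bigl(\bar t(x,H,W)-t(H,W)\bigr)$ (your combinatorial factor $k!/|\Aut(H)|$ is exactly $|\mathscr{G}_H|$, and the symmetrization over copies is handled correctly since $t_a(\cdot,H,W)$ is constant on automorphism orbits), so that $\zeta_1=\Var[h_1(U_1)]=0$ iff $\bar t$ is a.e.\ constant follows immediately from Definition \ref{regular}. The two computations are mathematically equivalent --- $\xi_1=\E[h_1(U)^2]=\zeta_1$ --- but your route is self-contained and avoids the external citations; the paper's route buys consistency with the vertex-join formalism it reuses elsewhere (e.g.\ in identifying $\xi_1$ in the variance asymptotics of Theorem \ref{thm:irregular}). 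Your second step, bounding $\Var[T(H,G_n)]$ by the $c=2$ term of Hoeffding's variance formula once $\zeta_1=0$, matches the paper's appeal to \cite[Section 1.3, Theorem 3]{lee2019ustatistics}.
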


Theorem \ref{thm:regular} assumes that $W$ is $H$-regular. Now, recalling \eqref{eq:Delta2Ustatistics}, and using Lemma \ref{lm:HWU},
$$\Var[\Delta_2(H,G_n)] \lesssim_{H, W} n^{2|V(H)|-2}\rho_n^{2|E(H)|}.$$ 
This means, recalling \eqref{eq:VarGn} in the $H$-regular case,
\begin{align} 
\frac{\Var[\Delta_2(H, G_n)]}{\Var[\Delta(H, G_n)]} \lesssim_{H, W} \min_{F\subseteq H:|E(F)|\ge 1} n^{|V(F)| -2 }\rho_n^{|E(F)|}\lesssim\rho_n  \rightarrow 0.
\end{align} 
This proves the first assertion in \eqref{eq:Deltaregular}. Next, using \eqref{eq:vardecomp},
$$\frac{\Var[\Delta_1(H,G_n)]}{\Var[\Delta(H, G_n)]}=1-\frac{\Var[\Delta_2(H,G_n)]}{\Var[\Delta(H, G_n)]}\to 1.$$
This means, from \eqref{eq:ratio}, $\frac{\Var[\Delta_1(H,G_n)|\CF(\bm U_n)]}{\Var[\Delta(H, G_n)]} \rightarrow 1$. Hence, applying Proposition \ref{prop:main} and Slutsky's theorem it follows that 
$$\frac{\Delta_1(H,G_n)}{\sqrt{\Var[\Delta(H, G_n)]}}\dto N(0,1),$$
unconditionally. This proves the second assertion in \eqref{eq:Deltaregular}. 
The result is \eqref{eq:ZGnHregular} is an immediate consequence of \eqref{eq:XHGn12} and \eqref{eq:Deltaregular}.    \hfill $\Box$

\section{Proof of Theorem \ref{thm:irregular}} \label{sec:thm2proof}

In this section, we prove Theorem \ref{thm:irregular}, which considers the case where $W$ is not $H$-regular. We begin by showing that $\Delta_2(H,G_n)$ (respectively, $\Delta_1(H,G_n)$) has negligible contribution to the total variance when  $n\rho_n^{m_1(H)} \ll 1$ (respectively, $n\rho_n^{m_1(H)} \gg 1$). 

\begin{prop}
\label{threshold2}
    Suppose $W$ is not $H$-regular. Then the following holds: 
    \begin{itemize}
    \item[$(1)$] If $n\rho_n^{m_1(H)} \ll 1,$ then $\Var[\Delta_2(H,G_n)]\ll\Var[\Delta(H, G_n)].$

\item[$(2)$] If $n\rho_n^{m_1(H)} \gg 1,$ then $\Var[\Delta_1(H,G_n)] \ll \Var[\Delta(H, G_n)] $. 
\end{itemize}
\end{prop}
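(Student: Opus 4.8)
The plan is to compare the asymptotic orders of $\Var[\Delta_1(H,G_n)]$ and $\Var[\Delta_2(H,G_n)]$ with that of $\Var[\Delta(H,G_n)]$, which Lemma \ref{lem:meanvar} already supplies. Write $N_n := n^{|V(H)|}\rho_n^{|E(H)|}\asymp_{H,W}\E[X(H,G_n)]$ and
\[
\Phi_H^{(1)} := \min_{F\subseteq H:\,|E(F)|\ge1} n^{|V(F)|}\rho_n^{|E(F)|},
\]
so that the quantity $\Phi_H$ of Lemma \ref{lm:Qn} satisfies $\Phi_H\asymp\min\{n,\Phi_H^{(1)}\}$ (the value $n$ coming from a single-vertex $F$), and $\Var[\Delta(H,G_n)]\asymp_{H,W} N_n^2/\Phi_H$ by \eqref{eq:VarGn}. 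The argument then reduces to establishing $\Var[\Delta_2(H,G_n)]\asymp_{H,W}N_n^2/n$ and $\Var[\Delta_1(H,G_n)]\lesssim_{H,W}N_n^2/\Phi_H^{(1)}$; the phase transition is located by optimizing $|E(F)|/(|V(F)|-1)$ over $F\subseteq H$ with $|V(F)|\ge2$, whose maximum is $m_1(H)$ by \eqref{eq:m1H}.

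For $\Delta_2(H,G_n) = \binom{n}{|V(H)|}\rho_n^{|E(H)|}T(H,G_n)$ (see \eqref{eq:Delta2Ustatistics}), $T(H,G_n)$ is a $U$-statistic of fixed order $|V(H)|$ with a bounded kernel, and since $W$ is not $H$-regular it is not first-order degenerate (Lemma \ref{lm:HWU}), so its leading Hoeffding coefficient $\zeta_1$ is strictly positive. The standard variance identity for $U$-statistics then yields $\Var[T(H,G_n)]\asymp_{H,W}1/n$ — the lower bound from $\zeta_1>0$ together with the nonnegativity of all Hoeffding coefficients, the upper bound from boundedness of the kernel — whence $\Var[\Delta_2(H,G_n)]\asymp_{H,W}N_n^2/n$. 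For $\Delta_1(H,G_n)=\sum_{s=1}^{M}X_s$ with $X_s=I_s-\E[I_s\mid\CF(\bm U_n)]$ as in Section \ref{sec:conditionalpf}, we have $\Var[\Delta_1(H,G_n)]=\sum_{s,t}\E[\Cov(I_s,I_t\mid\CF(\bm U_n))]$, and the conditional covariance vanishes unless $|E(H_s)\cap E(H_t)|\ge1$ (conditional independence of edge indicators given $\bm U_n$). Bounding $\Cov(I_s,I_t\mid\CF(\bm U_n))\le\E[I_sI_t\mid\CF(\bm U_n)]$, taking expectations, and grouping pairs $(H_s,H_t)$ by the isomorphism type $F$ of $H_s\cap H_t$ (necessarily with $|E(F)|\ge1$) — precisely the bookkeeping behind the upper bound in Lemma \ref{lem:meanvar} and the expansion \eqref{eq:sigmavariance} — gives
\[
\Var[\Delta_1(H,G_n)]\lesssim_{H,W}\sum_{F\subseteq H:\,|E(F)|\ge1}n^{2|V(H)|-|V(F)|}\rho_n^{2|E(H)|-|E(F)|}\asymp_{H,W}\frac{N_n^2}{\Phi_H^{(1)}},
\]
which is the one-sided bound we need.

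To finish, translate the hypotheses. For $(1)$, choose $F^\ast\subseteq H$ with $|V(F^\ast)|\ge2$ attaining $m_1(H)=|E(F^\ast)|/(|V(F^\ast)|-1)$; then $n^{|V(F^\ast)|}\rho_n^{|E(F^\ast)|}=n\,(n\rho_n^{m_1(H)})^{|V(F^\ast)|-1}\ll n$, so $\Phi_H\le n^{|V(F^\ast)|}\rho_n^{|E(F^\ast)|}\ll n$, and hence $\Var[\Delta_2(H,G_n)]/\Var[\Delta(H,G_n)]\asymp_{H,W}\Phi_H/n\to0$. For $(2)$, every $F\subseteq H$ with $|E(F)|\ge1$ has $|E(F)|/(|V(F)|-1)\le m_1(H)$, so (as $\rho_n<1$) $n^{|V(F)|}\rho_n^{|E(F)|}=n\,(n\rho_n^{|E(F)|/(|V(F)|-1)})^{|V(F)|-1}\ge n\,(n\rho_n^{m_1(H)})^{|V(F)|-1}\gg n$; minimizing over the finitely many such $F$ gives $\Phi_H^{(1)}\gg n$, and therefore, using $\Var[\Delta(H,G_n)]\ge\Var[\Delta_2(H,G_n)]$ from \eqref{eq:vardecomp},
\[
\frac{\Var[\Delta_1(H,G_n)]}{\Var[\Delta(H,G_n)]}\le\frac{\Var[\Delta_1(H,G_n)]}{\Var[\Delta_2(H,G_n)]}\lesssim_{H,W}\frac{N_n^2/\Phi_H^{(1)}}{N_n^2/n}=\frac{n}{\Phi_H^{(1)}}\to0.
\]

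The conceptual crux — and the step most prone to error — is the $\Delta_1$ variance bound: one must check that conditioning on $\bm U_n$ annihilates exactly the ``edgeless-overlap'' contribution (the terms with $|E(F)|=0$, which are what produce the $N_n^2/n$ scale), leaving a sum over $F$ with $|E(F)|\ge1$ only. This is precisely why the relevant threshold is $m_1(H)$ rather than $m(H)$, and why $\Delta_1(H,G_n)$ becomes negligible once $n\rho_n^{m_1(H)}\gg1$. The remaining ingredients — the $U$-statistic variance asymptotics for $\Delta_2(H,G_n)$ and the arithmetic with $m_1(H)$ — are routine.
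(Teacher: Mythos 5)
Your proof is correct, and for part $(2)$ it takes a genuinely more direct route than the paper. For part $(1)$ the two arguments coincide: both use the non-degeneracy of the $U$-statistic $T(H,G_n)$ (Lemma \ref{lm:HWU}) to get $\Var[\Delta_2(H,G_n)]\lesssim_{H,W} n^{2|V(H)|-2|V(H)|}N_n^2/n$ and then compare with the order of $\Var[\Delta(H,G_n)]$ from Lemma \ref{lem:meanvar}, observing that when $n\rho_n^{m_1(H)}\ll 1$ the maximum in \eqref{eq:VarGn} is attained at some $F$ with $|V(F)|\ge 2$. For part $(2)$, the paper computes the \emph{exact leading constants} of both $\Var[\Delta_2(H,G_n)]$ (via \eqref{var-S2}) and the $|V(F)|=1$ contribution to $\Var[\Delta(H,G_n)]$ (via \eqref{var-S}), checks that they match ($\xi_1$ appears in both), and concludes $\Var[\Delta_2]/\Var[\Delta]\to 1$. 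You instead bound $\Var[\Delta_1(H,G_n)]$ directly from above by $\sum_{F:|E(F)|\ge 1}n^{2|V(H)|-|V(F)|}\rho_n^{2|E(H)|-|E(F)|}$ — exploiting the fact, also used by the paper in \eqref{eq:varianceproposition}, that the conditional covariance given $\bm U_n$ kills all pairs with edgeless overlap — and pair this with the \emph{lower} bound $\Var[\Delta_2]\gtrsim N_n^2/n$ from $\xi_1>0$. This avoids the matching-constants computation entirely and only requires order-of-magnitude bookkeeping; it is arguably cleaner and less error-prone, and it still delivers $\Var[\Delta_2]/\Var[\Delta]\to 1$ via \eqref{eq:vardecomp}. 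What the paper's version buys in exchange is the explicit leading-order expression \eqref{var-S}, which it reuses in the critical regime $n\rho_n^{m_1(H)}\to c$ to identify the constant $\kappa$ in \eqref{eq:varianceRW}; your argument would not produce that constant, but it is not needed for the proposition as stated. All the individual steps you invoke (nonnegativity of the Hoeffding components, the count $O_H(n^{2|V(H)|-|V(F)|})$ of overlapping pairs, and the arithmetic translating $n\rho_n^{m_1(H)}\gg 1$ into $\Phi_H^{(1)}\gg n$) check out.
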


The proof of Proposition \ref{threshold2} is given in Appendix \ref{sec:threshold2pf}. Using the above result, we now complete the proof of Theorem \ref{thm:irregular}. To begin with, suppose $n\rho_n^{m_1(H)} \ll 1$. The first assertion in \eqref{eq:Deltairregularb} follows from Proposition \ref{threshold2} (1). This means, using \eqref{eq:vardecomp}, $\frac{\Var[\Delta_1(H,G_n)]}{\Var[\Delta(H, G_n)]}=1-\frac{\Var[\Delta_2(H,G_n)]}{\Var[\Delta(H, G_n)]}\to 1$. Hence, from \eqref{eq:ratio}, $\frac{\Var[\Delta_1(H,G_n)|\CF(\bm U_n)]}{\Var[\Delta(H, G_n)]} \rightarrow 1$ almost surely. Hence, applying Proposition \ref{prop:main} and Slutsky's theorem it follows that 
$$\frac{\Delta_1(H,G_n)}{\sqrt{\Var[\Delta(H, G_n)]}}\dto N(0,1),$$
unconditionally. This proves the second assertion in \eqref{eq:Deltairregularb}. 

Next, suppose $n\rho_n^{m_1(H)} \gg 1$. The first assertion in \eqref{eq:Deltairregularu} follows from Proposition \ref{threshold2} (2). Also, since $W$ is not $H$-regular, by Lemma \ref{lm:HWU} $\Delta_2(H,G_n)$ is a scaled $U$-statistic of order $|V(H)|$ which is non-degenerate. Hence, by \citet[Chapter 3, Theorem 1]{lee2019ustatistics}, 
\begin{align}\label{eq:conditionalmeangaussian}
\frac{\Delta_2(H,G_n)}{\sqrt{\Var[\Delta_2(H,G_n)]}} \dto N(0, 1).
\end{align}
Then by Slutsky's lemma 
$$\frac{\Delta_2(H,G_n)}{\sqrt{\Var[\Delta(H,G_n)]}} \dto N(0, 1).$$
This proves the second assertion in \eqref{eq:Deltairregularu}.

Next, consider the case $n\rho_n^{m_1(H)}\to c,$ for some $c\in(0,\infty).$ We have, from \eqref{var-S2}, 
\begin{align*} 
\Var[\Delta_2(H,G_n)] & = n^{2|V(H)|-1}\rho_n^{2|E(H)|}\frac{|V(H)|^2}{(|V(H)|!)^2}\xi_1 + o(n^{2|V(H)|-1}\rho_n^{2|E(H)|}) . 
\end{align*} 
Also, we know from \eqref{eq:sigmavariance}, 
\begin{align*}
\Var[\Delta_1(H,G_n)] & = \E[ \Var[\Delta_1(H,G_n) | \mathcal{F}(\bm{U}_n)] ] \nonumber \\ 
& =\sum\limits_{\substack{F\subseteq H\\|E(F)|\ge1}} \left\{ \rho_n^{2|E(H)|-|E(F)|} \E [Z_n(F) ] - \rho_n^{2|E(H)|} \E [A_n(F) ] \right\} 
\end{align*}
where $Z_n(F)$ and $A_n(F)$ are defined in \eqref{eq:ZnF}. Note that under Assumption \ref{assumption} and Lemma \ref{lem:positivity},
$$\E[\rho_n^{2|E(H)|-|E(F)|}Z_n(F))] \asymp_{H,W} n^{2|V(H)|-|V(F)|}\rho_n^{2|E(H)|-|E(F)|}$$
and $\rho_n^{2|E(H)|} \E [A_n(F) ] = o(n^{2|V(H)|-|V(F)|}\rho_n^{2|E(H)|-|E(F)|}) $. 
Hence, 
\begin{align}
\hspace{-0.05in} \Var[\Delta_1(H,G_n)] & = \sum\limits_{\substack{F\subseteq H\\|E(F)|\ge1}} \left\{ \rho_n^{2|E(H)|-|E(F)|} \E [Z_n(F) ]  + o(n^{2|V(H)|-|V(F)|} \rho_n^{2|E(H)|-|E(F)|}) \right\}
\label{eq:var1-crit}
\end{align} 
When $n\rho_n^{m_1(H)}\to c\in(0,\infty),$ for any $F\subseteq H$ with $|V(F)|\ge2,$ we have $n^{|V(F)|-1}\rho_n^{|E(F)|}\gtrsim_{H,W}1$, or, equivalently, $n^{2|V(H)|-|V(F)|} \rho_n^{2|E(H)|-|E(F)|}\lesssim_{H,W}n^{2|V(H)|-1} \rho_n^{2|E(H)|}.$ Thus, continuing from \eqref{eq:var1-crit},
\begin{align*}
\Var[\Delta_1(H,G_n)] = \sum\limits_{\substack{F\subseteq H\\|E(F)|\ge1}} \rho_n^{2|E(H)|-|E(F)|} \E [Z_n(F) ]  + o(n^{2|V(H)|-1} \rho_n^{2|E(H)|}).
\end{align*} 
To find the dominating term in the above sum, define the following set: 
$$\mathscr{S}(H):=\left\{F\subseteq H:|E(F)|\ge1\text{ and }m_1(H)=\frac{|E(F)|}{|V(F)|-1}\right\}.$$
This is the collection of subgraphs $F$ of $H$ where the maximum in \eqref{eq:m1H} is attained. (Note that if $H$ is strictly strongly balanced $\mathscr{S}(H)=\{H\}$.)

\begin{lem} Suppose $n\rho_n^{m_1(H)}\to c\in(0,\infty).$ If $F\subseteq H$ is a subgraph with $|E(F)|\ge1$ and $F\notin\mathscr{S}(H)$, then $\E[\rho_n^{2|E(H)|-|E(F)|} Z_n(F)] \ll n^{2|V(H)|-1}\rho_n^{2|E(H)|} $. 
\label{lm:H}
\end{lem}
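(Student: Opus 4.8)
The plan is to first determine the exact order of $\E[\rho_n^{2|E(H)|-|E(F)|} Z_n(F)]$ and then reduce the claim to an elementary inequality among exponents. Recall that if $H_s\cap H_t\simeq F$ then $|V(H_s)\cap V(H_t)|=|V(F)|$ and $|E(H_s)\cup E(H_t)|=2|E(H)|-|E(F)|$, so every summand in $Z_n(F)$ is a product of $2|E(H)|-|E(F)|$ factors $W(U_a,U_b)\le\|W\|_\infty$, while the number of ordered pairs $(H_s,H_t)\in\mathcal{H}_n^2$ with $H_s\cap H_t\simeq F$ is $O_H(n^{2|V(H)|-|V(F)|})$. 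Combining these two bounds yields
\[
\E[\rho_n^{2|E(H)|-|E(F)|} Z_n(F)]\lesssim_{H,W} n^{2|V(H)|-|V(F)|}\rho_n^{2|E(H)|-|E(F)|}
\]
(only the upper bound is needed here; a matching lower bound would follow from Lemma \ref{lem:positivity} if one wanted $\asymp$). Hence it suffices to show $n^{2|V(H)|-|V(F)|}\rho_n^{2|E(H)|-|E(F)|}\ll n^{2|V(H)|-1}\rho_n^{2|E(H)|}$, which after cancellation is exactly $n^{|V(F)|-1}\rho_n^{|E(F)|}\gg 1$.

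To prove this last statement, I would first note that $|E(F)|\ge 1$ forces $|V(F)|\ge 2$, so $F$ is one of the subgraphs over which the maximum defining $m_1(H)$ in \eqref{eq:m1H} is taken; therefore $\frac{|E(F)|}{|V(F)|-1}\le m_1(H)$, and since $F\notin\mathscr{S}(H)$ this inequality is strict, i.e.\ $|E(F)|<m_1(H)(|V(F)|-1)$. Now invoke the hypothesis $n\rho_n^{m_1(H)}\to c\in(0,\infty)$, which gives $\rho_n\asymp n^{-1/m_1(H)}$; substituting, $n^{|V(F)|-1}\rho_n^{|E(F)|}\asymp n^{(|V(F)|-1)-|E(F)|/m_1(H)}$, and the exponent is strictly positive precisely because $|E(F)|<m_1(H)(|V(F)|-1)$. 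Hence $n^{|V(F)|-1}\rho_n^{|E(F)|}\to\infty$, which completes the argument.

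The proof is essentially bookkeeping, so I do not anticipate a genuine obstacle. The one point requiring care is that the strict inequality $\frac{|E(F)|}{|V(F)|-1}<m_1(H)$ holds for \emph{every} subgraph $F$ with $|V(F)|\ge 2$ outside $\mathscr{S}(H)$ — this is immediate from the definition of $\mathscr{S}(H)$ as the set of maximizers, but it is exactly where the hypothesis $F\notin\mathscr{S}(H)$ (rather than merely $F\neq H$) is used, and it is what makes $n\rho_n^{m_1(H)}\asymp 1$ the precise transition point: the subgraphs $F\in\mathscr{S}(H)$ are then the only ones whose contribution to $\Var[\Delta_1(H,G_n)]$ survives at the critical scale $n^{2|V(H)|-1}\rho_n^{2|E(H)|}$.
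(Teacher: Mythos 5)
Your proposal is correct and follows essentially the same route as the paper: bound $\E[\rho_n^{2|E(H)|-|E(F)|}Z_n(F)]$ by $O_{H,W}(n^{2|V(H)|-|V(F)|}\rho_n^{2|E(H)|-|E(F)|})$ via the pair count and $\|W\|_\infty$, then reduce to $n^{|V(F)|-1}\rho_n^{|E(F)|}\gg 1$, which follows from the strict inequality $\frac{|E(F)|}{|V(F)|-1}<m_1(H)$ for $F\notin\mathscr{S}(H)$. Your explicit substitution $\rho_n\asymp n^{-1/m_1(H)}$ just spells out a step the paper leaves implicit.
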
 

\begin{proof}
Fix a subgraph $F\subseteq H$ with $|E(F)|\ge1$ and $F\not\in\mathscr{S}(H).$ Then $\frac{|E(F)|}{|V(F)|-1}<m_1(H).$ Since $n\rho_n^{m_1(H)}\to c\in(0,\infty),$ we must have $n^{|V(F)|-1}\rho_n^{|E(F)|}\gg1,$ or equivalently,
\begin{align}
    n^{2|V(H)|-|V(F)|}\rho_n^{2|E(H)|-|E(F)|}\ll n^{2|V(H)|-1}\rho_n^{2|E(H)|}.
    \label{eq:Z_nF_bound}
\end{align}
There are $O_H(n^{2|V(H)|-|V(F)|})$ copies $H_s,H_t\in\mathcal{H}_n$ of $H$ such that $H_s\cup H_t\simeq F.$ Thus, according to the definition of $Z_n(F)$ in \eqref{eq:ZnF}, we have
\begin{align*}
    \E[\rho_n^{2|E(H)|-|E(F)|} Z_n(F)]&\lesssim_{H,W}n^{2|V(H)|-|V(F)|}\rho_n^{2|E(H)|-|E(F)|} \\
    &\lesssim_{H,W}n^{2|V(H)|-|V(F)|}\rho_n^{2|E(H)|-|E(F)|}\ll n^{2|V(H)|-1}\rho_n^{2|E(H)|},
\end{align*}
where the last line follows from \eqref{eq:Z_nF_bound}. This completes the proof of the lemma.
\end{proof}

The above lemma implies, 
\begin{align}\label{eq:var-ratio} 
    & \frac{\Var[\Delta_2(H,G_n)]}{\Var[\Delta(H, G_n)]} \nonumber \\ 
    & = \frac{\Var[\Delta_2(H,G_n)]}{ \Var[\Delta_1(H, G_n)] + \Var[\Delta_2(H, G_n)] } \\ 
    & =\frac{n^{2|V(H)|-1}\rho_n^{2|E(H)|}\frac{|V(H)|^2}{(|V(H)|!)^2}\xi_1 + o(n^{2|V(H)|-1} \rho_n^{2|E(H)|}) }{n^{2|V(H)|-1}\rho_n^{2|E(H)|}\frac{|V(H)|^2}{(|V(H)|!)^2}\xi_1+\sum_{F\in\mathscr{S}(H)}\E[ \rho_n^{2|E(H)|-|E(F)|}Z_n(F) ]  + o(n^{2|V(H)|-1} \rho_n^{2|E(H)|}) }. \nonumber 
\end{align}
Hence, to compute the limit of \eqref{eq:var-ratio} we need to consider graphs which are in $\mathscr{S}(H)$. Fix graphs $H$ and $F\subseteq H,$ and define
$$\mathscr{T}(H,F):=\{R:H_s\cup H_t\simeq R,\text{ where }H_s,H_t \simeq H \text{ with }H_s\cap H_t\simeq F\}.$$
Also, define $\eta(H,F,R)$ to be the number of ways two copies of $H$ can be joined on the vertex set $\{1,2,\ldots,2|V(H)|-|V(F)|\}$ such that their intersection is isomorphic to $F$ and their union is isomorphic to $R.$ Formally, given $F\subseteq H$ and $R\in\mathscr{T}(H,F),$ we have
$$\eta(H,F,R)=|\{(H_1,H_2):H_1\simeq H_2\simeq H,H_1\cap H_2\simeq F,H_1\cup H_2\simeq R\}|.$$

\begin{lem} Suppose $n\rho_n^{m_1(H)}\to c\in(0,\infty).$ If $F \in\mathscr{S}(H)$, then 
$$\frac{\E[\rho_n^{2|E(H)|-|E(F)|} Z_n(F) ]}{n^{2|V(H)|-1} \rho_n^{2|E(H)|}} = \frac{1}{c^{|V(F)|-1}(2|V(H)|-|V(F)|)!}\sum_{R\in\mathscr{T}(H,F)}\eta(H,F,R)t(R,W) + o(1).$$
\label{lm:RW}
\end{lem}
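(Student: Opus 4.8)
The plan is to compute $\E[Z_n(F)]$ exactly and then read off its leading-order behaviour. Since $Z_n(F)$ is a finite sum of products of $W$-values evaluated at i.i.d.\ uniform labels, interchanging expectation and summation turns each summand into a homomorphism density,
\[
\E\!\left[\prod_{(a,b)\in E(H_s)\cup E(H_t)}W(U_a,U_b)\right] = t(H_s\cup H_t,\,W),
\]
which depends only on the isomorphism type of the labelled union $H_s\cup H_t$. Thus the first step yields $\E[Z_n(F)] = \sum_{H_s,H_t\in\mathcal{H}_n:\,H_s\cap H_t\simeq F} t(H_s\cup H_t,\,W)$.

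Second, I would group the ordered pairs $(H_s,H_t)$ according to the isomorphism type $R$ of their union. Because $H_s\cap H_t\simeq F$ forces $|V(H_s)\cap V(H_t)| = |V(F)|$, every relevant union has exactly $2|V(H)|-|V(F)|$ vertices, and its type necessarily lies in $\mathscr{T}(H,F)$. For each fixed $R\in\mathscr{T}(H,F)$, the number of ordered pairs of copies of $H$ in $K_n$ with intersection type $F$ and union type $R$ factors as $\binom{n}{2|V(H)|-|V(F)|}\,\eta(H,F,R)$: one first chooses the $(2|V(H)|-|V(F)|)$-element ground set $V(H_s)\cup V(H_t)$, and then, on that ground set, places the two copies in $\eta(H,F,R)$ ways — exactly the quantity defined in the statement, which is invariant under relabelling of the ground set. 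This gives
\[
\E[Z_n(F)] = \binom{n}{2|V(H)|-|V(F)|}\sum_{R\in\mathscr{T}(H,F)}\eta(H,F,R)\,t(R,\,W).
\]

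Third, I would substitute this into $\rho_n^{2|E(H)|-|E(F)|}\E[Z_n(F)]\big/\bigl(n^{2|V(H)|-1}\rho_n^{2|E(H)|}\bigr)$, use $\binom{n}{2|V(H)|-|V(F)|} = n^{2|V(H)|-|V(F)|}/(2|V(H)|-|V(F)|)! + o(n^{2|V(H)|-|V(F)|})$, and observe that the surviving powers of $n$ and $\rho_n$ combine to leave $n^{|V(F)|-1}\rho_n^{|E(F)|}$ in the denominator. The decisive point is that $F\in\mathscr{S}(H)$ means $|E(F)| = m_1(H)(|V(F)|-1)$, whence $n^{|V(F)|-1}\rho_n^{|E(F)|} = \bigl(n\rho_n^{m_1(H)}\bigr)^{|V(F)|-1}\to c^{|V(F)|-1}$ by the standing hypothesis $n\rho_n^{m_1(H)}\to c$. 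Putting the pieces together produces the claimed identity, with error $o(1)$.

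I expect the only delicate part to be the combinatorial bookkeeping in the second step: verifying that the count of labelled pairs with prescribed intersection type $F$ and union type $R$ really does factor as $\binom{n}{2|V(H)|-|V(F)|}\eta(H,F,R)$ — and, in particular, that $\eta(H,F,R)$ is independent of which $(2|V(H)|-|V(F)|)$-subset of $[n]$ is taken as the ground set. The remaining steps are a routine asymptotic expansion.
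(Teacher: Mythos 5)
Your proposal is correct and takes essentially the same route as the paper: group the ordered pairs $(H_s,H_t)$ by the isomorphism type of their union to get $\E[Z_n(F)]=\binom{n}{2|V(H)|-|V(F)|}\sum_{R\in\mathscr{T}(H,F)}\eta(H,F,R)\,t(R,W)$, then use that $F\in\mathscr{S}(H)$ forces $n^{|V(F)|-1}\rho_n^{|E(F)|}=\bigl(n\rho_n^{m_1(H)}\bigr)^{|V(F)|-1}\to c^{|V(F)|-1}$. In fact you justify the combinatorial factorization (choice of ground set times the label-invariant count $\eta$) that the paper simply asserts.
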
 

\begin{proof}
As $F\in\mathscr{S}(H),$ we have $n^{|V(F)|-1}\rho_n^{|E(F)|}\to c^{|V(F)|-1}.$ Then,
\begin{align*}
    & \frac{\E(\rho_n^{2|E(H)|-|E(F)|}Z_n(F))}{n^{2|V(H)|-1}\rho_n^{2|E(H)|}} \\ &=\frac{\rho_n^{2|E(H)|-|E(F)|}}{n^{2|V(H)|-1}\rho_n^{2|E(H)|}}\binom{n}{2|V(H)|-|V(F)|}\sum_{R\in\mathscr{T}(H,F)}\eta(H,F,R)t(R,W)\notag\\
    &\sim\frac{n^{2|V(H)|-|V(F)|}\rho_n^{2|E(H)|-|E(F)|}}{n^{2|V(H)|-1}\rho_n^{2|E(H)|}(2|V(H)|-|V(F)|)!}\sum_{R\in\mathscr{T}(H,F)}\eta(H,F,R)t(R,W)\nonumber\\
    &\sim\frac{1}{c^{|V(F)|-1}(2|V(H)|-|V(F)|)!}\sum_{R\in\mathscr{T}(H,F)}\eta(H,F,R)t(R,W).
\end{align*}  
This completes the proof of Lemma \ref{lm:RW}. 
\end{proof}

Applying Lemma \ref{lm:H} and Lemma \ref{lm:RW} to \eqref{eq:var-ratio} gives,  
\begin{align}\label{eq:varianceRW}
\kappa := 1- \frac{\frac{|V(H)|^2}{(|V(H)|!)^2}\xi_1}{\frac{|V(H)|^2}{(|V(H)|!)^2}\xi_1+\sum_{F\in\mathscr{S}(H)}\frac{1}{c^{|V(F)|-1}(2|V(H)|-|V(F)|)!}\sum_{R\in\mathscr{T}(H,F)}\eta(H,F,R)t(R,W)} . 
\end{align} 
Clearly, $\kappa \in (0, 1)$. This shows (recall \eqref{eq:vardecomp}), 
\begin{align}\label{eq:variancethreshold}
\frac{\Var[\Delta_1(H,G_n)]}{\Var[\Delta(H, G_n)]} \rightarrow \kappa \quad \text{ and } \quad \frac{\Var[\Delta_2(H,G_n)]}{\Var[\Delta(H, G_n)]} \rightarrow 1- \kappa . 
\end{align}
Now, using Proposition \ref{prop:main}, \eqref{eq:conditionalmeangaussian} (which also holds when $n\rho_n^{m_1(H)}\to c\in(0,\infty)$ by the same arguments), and Lemma \ref{lm:conditionaljoint}, we have 
    \begin{align*}
    \begin{pmatrix} 
    \frac{\Delta_1(H,G_n)}{\sqrt{\Var[\Delta_1(H, G_n)]}} \\ 
    \frac{\Delta_2(H,G_n)}{\sqrt{\Var[\Delta_2(H, G_n)]}}
    \end{pmatrix} \dto N \left( 
    \begin{pmatrix} 
    0 \\ 
    0
    \end{pmatrix},   \begin{pmatrix} 
    1 & 0 \\ 
    0 & 1  
    \end{pmatrix} \right) . 
    \end{align*}  
Then applying Slutsky's theorem with \eqref{eq:variancethreshold} proves the result in \eqref{eq:Deltairregulart}. 

\hfill $\Box$

\begin{remark} 
Note that when $H$ is strictly strongly balanced, that is, $\mathscr{S}(H)=\{H\}$, then expression of $\kappa$ in \eqref{eq:varianceRW} simplifies to: 
\begin{align*}
    \kappa&= 1- \frac{\frac{|V(H)|^2}{(|V(H)|!)^2}\xi_1}{\frac{|V(H)|^2}{(|V(H)|!)^2}\xi_1+\frac{1}{c^{|V(H)|-1}|V(H)|!}|\mathscr{G}_H|t(H,W)}\\
    &=\frac{t(H,W)/|\Aut(H)|}{t(H,W)/|\Aut(H)|+c^{|V(H)|-1}\xi_1/((|V(H)|-1)!)^2}, 
\end{align*}
recalling \eqref{eq:G_H}. Here, $|\mathscr{G}_H|$ is the number of copies of $H$ in the complete graph $K_{|V(H)|}$ on $|V(H)|$ vertices, and $\xi_1$ is defined in Appendix \ref{appendix:proofof_varlemma}. This shows that under $n\rho_n^{m_1(H)}\to c\in(0,\infty),$
$$\frac{\Var[\Delta_1(H,G_n)]}{\Var[\Delta(H, G_n)]}\to\kappa\quad\text{and}\quad\frac{\Var[\Delta_2(H,G_n)]}{\Var[\Delta(H, G_n)]}\to1-\kappa,$$
where $\kappa\in(0,1)$ is a constant depending on $c,H$ and $W.$ 
\end{remark} 

\section{Conclusions}

In this paper, we show that $X_n(H,G_n)$, with $G_n \sim G(n,\rho_n,W)$, is asymptotically normal whenever $n\rho_n^{m(H)} \gg 1$, that is, when the sparsity level is above the containment threshold for $H$. Another important regime which we do not consider in the paper is when $n\rho_n^{m(H)} \asymp 1$, that is, $\rho_n$ is at the containment threshold. For the classical Erd\H{o}s--R\'enyi model, it is known that at the threshold, $X_n(H,G_n)$ converges to a Poisson distribution when $H$ is strictly balanced \citep{bollobas1981threshold}. If $H$ is unbalanced, the problem reduces to a balanced subgraph via an appropriate normalization \cite[Section~4]{rucinski1990survey}. When $H$ is balanced but not strictly balanced, the limiting distribution is more intricate and depends on the structure and multiplicity of subgraphs maximizing $m(H)$, with no universal closed form \citep{bollobas1989subgraph,JLR}.

Recently, there has been growing interest in deriving analogous results for inhomogeneous random graph models. In this direction, \citet{coulson2016poisson} derived Poisson approximation results for $X_n(H,G_n)$, when $G_n$ is sampled from a stochastic block model and $H$ is strictly balanced (see also \cite{coulson2018compound}). Recently, \citet{liu2025normalpoisson} obtained an analogous Poisson approximation result in the random connection model. Extending these results to the sparse graphon model, as well as addressing the case where $H$ is not strictly balanced, when $\rho_n$ is at the containment threshold, are interesting directions for future research. 

\small

\subsection*{Acknowledgement} BBB and SC were supported in part by NSF CAREER grant DMS 2046393 and a Sloan Research Fellowship.

\bibliography{graphonbib,ref}
\bibliographystyle{abbrvnat}


\normalsize

\appendix

\section{Proofs of Technical Lemmas}  
\label{sec:lemproofs}

In this section we collect the proofs of some technical lemmas. We begin with the proof of   
Proposition \ref{prop:H} in Appendix \ref{sec:Hpf}. Lemma \ref{lem:positivity} is proved in Appendix \ref{sec:variancepf}. Then, in Appendix \ref{appendix:proofof_varlemma} we prove Lemma \ref{lm:HWU}. 
Finally, in Appendix \ref{sec:conditionaljointpf} we prove a result about establishing joint convergence from marginal convergence.

\subsection{Proof of Proposition \ref{prop:H}} 
\label{sec:Hpf} 

Since $|E(H)|\ge1,$ the maximum in the definition of $m(H)$ is not attained at a single isolated vertex. Thus,
$$m(H):=\max_{F\subseteq H,\,|V(F)|\ge1}\frac{|E(F)|}{|V(F)|}=\max_{F\subseteq H,\,|V(F)|\ge2}\frac{|E(F)|}{|V(F)|}<\max_{F\subseteq H,\,|V(F)|\ge2}\frac{|E(F)|}{|V(F)|-1}=m_1(H),$$
which proves the first part of the proposition.

Next, suppose $H$ is strongly balanced, that is, for all subgraphs $F = (V(F), E(F))$ of $H$, $\frac{|E(F)|}{|V(F)|-1} \leq \frac{|E(H)|}{|V(H)|-1}$. This means, for all $F$ such that $|V(F)| < |V(H)|$, 
\begin{align*}
\frac{|E(F)|}{|V(F)|} = \frac{|E(F)| (|V(F)|-1) }{(|V(F)|-1) |V(F)| } \leq  \frac{|E(H)| (|V(F)|-1) }{(|V(H)|-1) |V(F)| } & \leq \frac{|E(H)|}{|V(H)|} \left(\frac{|V(H)| (|V(F)|-1)}{|V(F)| (|V(H)|-1)} \right) \nonumber \\ 
& < \frac{|E(H)|}{|V(H)|} . 
\end{align*}
Moreover, if $|V(F)| = |V(H)|$, then for any subgraph $F$ which is not the whole graph $H$, $\frac{|E(F)|}{|V(F)|} < \frac{|E(H)|}{|V(H)|}$ since $|E(H)|\ge1$. This shows strongly balanced graphs are strictly balanced. \hfill $\Box$

\subsection{Proof of Lemma \ref{lem:positivity}}
\label{sec:variancepf}

Fix a subgraph $F\subseteq H$ with $|V(F)|\ge1.$ Consider two copies $H_s,\,H_t\in\mathcal{H}_n$ of $H$ in $K_n$ such that 
\begin{align*} 
V(H_s)&=\{i_1,\,i_2,\,\ldots,\,i_{|V(F)|},\,i_{|V(F)|+1},\,\ldots,\,i_{|V(H)|}\} \nonumber \\ 
V(H_t)&=\{i_1,\,i_2,\,\ldots,\,i_{|V(F)|},\,i_{|V(H)|+1},\,\ldots,\,i_{2|V(H)|-|V(F)|}\} , 
\end{align*} 
where the first $|V(F)|$ vertices are common. The edge sets of $H_s,\,H_t$ are determined by maps $\phi_s:V(H)\to V(H_s)$ and $\phi_t:V(H)\to V(H_t)$, such that $(a,\,b)\in E(H)$ if and only if $(\phi_s(a),\,\phi_s(b))\in E(H_s)$ and $(\phi_t(a),\,\phi_t(b))\in E(H_t)$, respectively. In particular, we consider the following maps
$$\phi_s(a)=i_a,\,1\le a\le |V(H)|,\quad\text{and}\quad\phi_t(a)=\begin{cases}i_a&\text{if }1\le a\le |V(F)|,\\i_{a+|V(H)|-|V(F)|}&\text{if }|V(F)|+1\le a\le |V(H)|.\end{cases}$$
Then it is easy to see that the first $|V(F)|$ vertices of $H_s$ and $H_t$ constitute a copy of $F,$ that is, $H_s\cap H_t\simeq F$, and the remaining part of the copy of $H$ is extended in a similar fashion for both $H_s$ and $H_t.$ Then
\begin{align*}
    t(H_s\cup H_t,\,W)&=\int_{[0,\,1]^{|V(H_s)\cup V(H_t)|}}\prod_{(a,\,b)\in E(H_s)\cup E(H_t)}W(U_a,\,U_b)\prod_{a\in V(H_s)\cup V(H_t)}\text{d}x_a\\
    &=\int_{[0,\,1]^{|V(F)|}}\frac{t^2_{V(F)}(x_1,\,x_2,\,\ldots,\,x_{|V(F)|};\,H,\,W)}{\prod_{(a,\,b)\in E(F)}W(x_a,\,x_b)}\prod_{a\in V(F)}\text{d}x_a , 
    \end{align*} 
    where 
$$t_{V(F)}(x_1,\,x_2,\,\ldots,\,x_{|V(F)|};\,H,\,W)=\E\left[\prod_{(a,\,b)\in E(H)}W(U_a,\,U_b)\mid U_{a_j}=x_j,\text{ for }1\le j\le |V(F)|\right],$$ 
is the $|V(F)|$-point conditional homomorphism density of $H$ in $W$ (see \citet[Definition 2.1]{bhattacharya2021fluctuations}). Then, by the Cauchy-Schwarz inequality and using $|W| \leq 1$,  
    \begin{align*}
    t(H_s\cup H_t,\,W)& \ge \left(\int_{[0,\,1]^{|V(F)|}}t_{V(F)}(x_1,\,x_2,\,\ldots,\,x_{|V(F)|};\,H,\,W)\prod_{a\in V(F)}\text{d}x_a\right)^2\\
    & \geq t(H,\,W)^2 > 0 . 
\end{align*}
Furthermore, note that to create pairs $H_s,\,H_t$ as above, we may first choose the common $|V(F)|$ vertices in $\binom{n}{|V(F)|}$ ways. The rest of the vertices of $H_s$ and $H_t$ can be chosen in $$\binom{n-|V(F)|}{|V(H)|-|V(F)|}\binom{n-|V(H)|}{|V(H)|-|V(F)|}$$ ways. Hence, the number of such pairs is $\Theta_H(n^{2|V(H)|-|V(F)|})$. This completes the proof of the lemma. \hfill $\Box$

\subsection{Proof of Lemma \ref{lm:HWU}}\label{appendix:proofof_varlemma}

We begin by recalling the expression of $T(H, G_n)$ from \eqref{eq:THG} and noting that $T
(H,\,G_n)$ is degenerate if and only if
$$\xi_1:=\Cov[h(U_{i_1},\, U_{i_2},\,\ldots,\,U_{i_{|V(H)|}}),\,h(U_{i_1'},\,U_{i_2'},\,\ldots,\,U_{i_{|V(H)|}'})] = 0 , $$
with $|\{i_1,\,i_2,\,\ldots,\,i_{|V(H)|}\}\cap\{i_1',\,i_2',\,\ldots,\,i_{|V(H)|}'\}|=1.$ We will show that $\xi_1=0$ if and only if the graphon $W$ is $H$-regular. To prove this, observe that
\begin{align*}
    \xi_1&=\Cov[h(U_1,\,U_2,\,\ldots,\,U_{|V(H)|}),\,h(U_{|V(H)|},\,U_{|V(H)|+1},\,\ldots,\,U_{2|V(H)|-1})] \\
    &=\E[h(U_1,\,U_2,\,\ldots,\,U_{|V(H)|})h(U_{|V(H)|},\,U_{|V(H)|+1},\,\ldots,\,U_{2|V(H)|-1})] \\
    &=\sum\limits_{\substack{H'\in\mathscr{G}_H(\{1,\,2,\,\ldots,\,|V(H)|\})\\H''\in\mathscr{G}_H(\{|V(H)|,\,|V(H)|+1,\,\ldots,\,2|V(H)|-1\})}}\left\{\E\left[\prod\limits_{\substack{(i,\,j)\in E(H')\\(k,\,l)\in E(H'')}}W(U_i,\,U_j)W(U_k,\,U_l)\right]-t(H,\,W)^2\right\} . 
    \end{align*}
To further simplify the expression recall Definition \ref{defn:v_join}. Now, applying \cite[Lemma 5.2]{bhattacharya2021fluctuations} gives, 
\begin{align*}
    \xi_1 = \frac{|\mathscr{G}_H|^2}{|V(H)|^2}\left[\sum_{1\le a,\,b\le |V(H)|}t\left(H\bigoplus_{a,\,b} H,\,W\right)-|V(H)|^2t(H,\,W)^2\right].
\end{align*}
Finally, invoking \citet[Lemma 5.4]{bhattacharya2021fluctuations} we conclude that $\xi_1=0$ if and only if $W$ is $H$-regular. Thus, when $W$ is $H$-regular, $T(H, G_n)$ is a first order degenerate $U$-statistics. Hence, by \citet[Section 1.3, Theorem 3]{lee2019ustatistics}, $\Var[T(H,\,G_n)]\lesssim_{H,\,W}\frac{1}{n^2}$. This completes the proof of Lemma \ref{lm:HWU}. \hfill $\Box$

\subsection{Proof of Proposition \ref{threshold2}} 
\label{sec:threshold2pf}

Recall the definition of $T(H, G_n)$ from \eqref{eq:THG}. Since $W$ is not $H$-regular, by Lemma \ref{lm:HWU} $T(H, G_n)$  is a $U$-statistic of order $|V(H)|$ which is non-degenerate. Hence, by \citet[Section 1.3, Theorem 3]{lee2019ustatistics}, 
$\Var[T(H,G_n)] \lesssim_{H, W} \frac{1}{n}$, and recalling \eqref{eq:Delta2Ustatistics}, 
$$\Var[\Delta_2(H,G_n)] \lesssim_{H, W} n^{2|V(H)|-1}\rho_n^{2|E(H)|}.$$ 
Also, recalling \eqref{eq:VarGn} in the case when $W$ is not $H$-regular, 
\begin{align} 
\Var[\Delta(H, G_n)] \asymp_{H, W} \max_{F\subseteq H:|V(F)|\ge 1} n^{2|V(H)|-|V(F)|}\rho_n^{2|E(H)|-|E(F)|}.
\label{eq:var_recall}
\end{align} 
We claim that when $n\rho_n^{m_1(H)}\ll 1,$ the maximum in \eqref{eq:var_recall} is obtained for some subgraph $F\subseteq H$ with $|V(F)|\ge2.$ Indeed, when $n\rho_n^{m_1(H)} \ll 1,$ there exists a subgraph $F\subseteq H$ with $|V(F)|\ge 2$ such that $n^{|V(F)|-1}\rho_n^{|E(F)|}\ll1$ or equivalently, $n^{2|V(H)|-1}\rho_n^{2|E(H)|}\ll n^{2|V(H)|-|V(F)|}\rho_n^{2|E(H)|-|E(F)|}.$ Thus,
$$\frac{\Var[\Delta_2(H,G_n)]}{\Var[\Delta(H, G_n)]}\lesssim_{H,W}\min_{F\subseteq H,|V(F)|\ge2}n^{|V(F)|-1}\rho_n^{|E(F)|}\to0 , $$
since $n\rho_n^{m_1(H)}\ll 1.$ This proves Proposition \ref{threshold2} (1). 

Next, for the second part, from \citet[Section 1.3, Theorem 3]{lee2019ustatistics} we know that 
$$\lim_{n\to\infty}n\Var[T(H,G_n)]=|V(H)|^2\xi_1,$$
where $\xi_1$ is defined in Appendix \ref{appendix:proofof_varlemma}. Note that $\xi_1> 0$, as $W$ is not $H$-regular.  Thus, recalling \eqref{eq:Delta2Ustatistics},
    \begin{equation}
        \Var[\Delta_2(H,G_n)] = \frac{|V(H)|^2}{n}\binom{n}{|V(H)|}^2\rho_n^{2|E(H)|}\xi_1 + o(n^{2|V(H)|-1}\rho_n^{2|E(H)|}) . 
        \label{var-S2}
    \end{equation} 
    Now, continuing from (\ref{eq:var-1}), we have 
    \begin{align}\label{eq:varianceF}
        \Var[\Delta(H,G_n)] & =\sum_{F\subseteq H:|V(F)|=1}\sum_{\substack{H_s,H_t\in\mathcal{H}_n \\ H_s\cap H_t\simeq F}}\Cov[I_s,I_t]  + \sum_{F\subseteq H:|V(F)| \geq 2}\sum_{\stackrel{H_s,H_t\in\mathcal{H}_n}{H_s\cap H_t\simeq F}}\Cov[I_s,I_t] , 
\end{align}
where $F$ is the subgraph of $H$ isomorphic to $H_s\cap H_t=(V(H_s)\cap V(H_t),E(H_s)\cap E(H_t))$. Note that for a fixed subgraph $F\subseteq H,$ the number of pairs $(H_s,H_t) $ having $H_s\cap H_t$ isomorphic to $F$ is $O_H(n^{2|V(H)|-|V(F)|}).$ Hence, the second term in \eqref{eq:varianceF} can be bounded as, 
\begin{align*}
    \sum_{F\subseteq H:|V(F)| \geq 2}\sum_{\substack{H_s,H_t\in\mathcal{H}_n \\ H_s\cap H_t\simeq F}}\Cov[I_s,I_t]&\lesssim_{H,W}\sum_{F\subseteq H:|V(F)| \geq 2}n^{2|V(H)|-|V(F)|}\rho_n^{2|E(H)|-|E(F)|}\\
    &=o(n^{2|V(H)|-1}\rho_n^{2|E(H)|}) , 
\end{align*}
since $n\rho_n^{m_1(H)} \gg 1$ is equivalent to the condition that for all subgraphs $F\subseteq H$ with $|V(F)|\ge 2,$ we have $n^{2|V(H)|-|V(F)|}\rho_n^{2|E(H)|-|E(F)|}\ll n^{2|V(H)|-1}\rho_n^{2|E(H)|}.$ 
To compute the first term in \eqref{eq:varianceF}, first choose the vertex sets of the two copies of $H$ sharing a single vertex. This is equivalent to finding the number of ordered pairs of subsets $(A_1,A_2)$ of $[n]$ such that $|A_1\cap A_2|=1.$ Observe that the number of such ordered pairs is $|V(H)|\binom{n}{|V(H)|}\binom{n-|V(H)|}{|V(H)|-1}.$ Once we have fixed the vertex sets of the two copies, we can trace the steps of the proof of Lemma \ref{lm:HWU} to obtain
    \begin{equation}
        \Var[\Delta(H, G_n)] = |V(H)|\binom{n}{|V(H)|}\binom{n-|V(H)|}{|V(H)|-1}\rho_n^{2|E(H)|}\xi_1 +  o(n^{2|V(H)|-1}\rho_n^{2|E(H)|}) .
        \label{var-S}
    \end{equation}
    from \eqref{eq:varianceF}. Taking the ratio of \eqref{var-S2} and \eqref{var-S} it follows that $\frac{\mathrm{Var}[\Delta_2(H, G_n)]}{\mathrm{Var}[\Delta(H, G_n)]} \rightarrow 1$. Hence, recalling \eqref{eq:vardecomp} the result in Proposition \ref{threshold2} (2) follows. \hfill $\Box$

    \subsection{ Joint Convergence from Marginal Convergence } \label{sec:conditionaljointpf}

In this section we formulate a general result about bivariate convergence given the convergence of one of the marginals and the conditional convergence of the other marginal. This has been used in the proof of \eqref{eq:Deltairregulart}.

\begin{lem} Suppose $\{(X_n, Y_n)\}_{n \geq 1}$ be a collection of random variables and $\{\mathcal F_n\}_{n \geq 1}$ a collection of sigma algebras such $Y_n$ is $\mathcal F_n$-measurable. Assume the following holds: 
\begin{itemize} 
\item $d_{\mathrm{Wass}}(X_n \mid \mathcal F_n,\,Z) \Pto 0$, where $Z \sim N(0,\,1).$
\item $Y_n \dto N(0,\,1).$
\end{itemize}
Then 
$$\begin{pmatrix} 
X_n \\ 
Y_n 
\end{pmatrix} \dto N \left( 
    \begin{pmatrix} 
    0 \\ 
    0
    \end{pmatrix},   \begin{pmatrix} 
    1 & 0 \\ 
    0 &  1
    \end{pmatrix} \right) . $$   
  \label{lm:conditionaljoint}
\end{lem}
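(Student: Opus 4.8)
\textbf{Proof proposal for Lemma \ref{lm:conditionaljoint}.}
The plan is to show convergence of the joint characteristic function, exploiting the fact that $Y_n$ is $\mathcal F_n$-measurable so that conditioning on $\mathcal F_n$ ``freezes'' $Y_n$ while leaving $X_n$ approximately standard normal. Fix $s, t \in \R$. The object to analyze is $\E[e^{i s X_n + i t Y_n}] = \E\!\left[ e^{i t Y_n} \, \E[e^{i s X_n} \mid \mathcal F_n] \right]$, where the second equality uses that $Y_n$ is $\mathcal F_n$-measurable. If we can show that $\E[e^{i s X_n}\mid \mathcal F_n] \to e^{-s^2/2}$ in probability, then since $|e^{i t Y_n}( \E[e^{isX_n}\mid\mathcal F_n] - e^{-s^2/2})| \le |\E[e^{isX_n}\mid\mathcal F_n] - e^{-s^2/2}| \le 2$ is bounded, the bounded convergence theorem gives $\E[e^{isX_n + itY_n}] = e^{-s^2/2}\,\E[e^{itY_n}] + o(1) \to e^{-s^2/2} e^{-t^2/2}$, which is exactly the characteristic function of the claimed bivariate normal, and L\'evy's continuity theorem finishes the argument.

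The remaining point is to pass from the conditional Wasserstein convergence hypothesis $d_{\mathrm{Wass}}(X_n \mid \mathcal F_n, Z) \Pto 0$ to the conditional characteristic function convergence $\E[e^{isX_n}\mid\mathcal F_n]\Pto e^{-s^2/2}$. This is routine: for fixed $s$, the maps $x \mapsto \cos(sx)$ and $x \mapsto \sin(sx)$ are Lipschitz with constant $|s|$, so $|s|^{-1}\cos(s\cdot)$ and $|s|^{-1}\sin(s\cdot)$ lie in the class $\mathcal L$ of $1$-Lipschitz functions from Definition \ref{defn:cond_wass} (when $s \neq 0$; the case $s = 0$ is trivial). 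Hence
\begin{align*}
\left| \E[e^{isX_n}\mid\mathcal F_n] - e^{-s^2/2} \right|
&\le \left| \E[\cos(sX_n)\mid\mathcal F_n] - \E[\cos(sZ)] \right| + \left| \E[\sin(sX_n)\mid\mathcal F_n] - \E[\sin(sZ)] \right| \\
&\le 2|s|\, d_{\mathrm{Wass}}(X_n\mid\mathcal F_n, Z) \Pto 0 ,
\end{align*}
using $\E[e^{isZ}] = e^{-s^2/2}$ for $Z \sim N(0,1)$. This gives the required in-probability convergence.

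I do not expect a genuine obstacle here; the only mild subtlety is bookkeeping the fact that $\E[e^{isX_n}\mid\mathcal F_n]$ is a (complex-valued) random variable and that we are combining an in-probability statement (for the conditional characteristic function of $X_n$) with a distributional statement (for $Y_n$) inside a single expectation. Bounded convergence handles this cleanly because the integrand is uniformly bounded by $2$; one can, if desired, first pass to a subsequence along which the convergence is almost sure, apply dominated convergence there, and conclude by the subsequence criterion, but this is not strictly necessary. No independence between $X_n$ and $Y_n$ is assumed or needed — the asymptotic independence in the limit is forced by the fact that the conditional law of $X_n$ given $\mathcal F_n$ degenerates to a deterministic ($N(0,1)$) law, so it ``forgets'' $\mathcal F_n$ and in particular forgets $Y_n$.
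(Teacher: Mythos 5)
Your proposal is correct and follows essentially the same route as the paper's proof: factor the joint characteristic function using the $\mathcal F_n$-measurability of $Y_n$, control $\E[e^{isX_n}\mid\mathcal F_n]-e^{-s^2/2}$ via the $1$-Lipschitz test functions $\cos(s\cdot)$ and $\sin(s\cdot)$ against the conditional Wasserstein hypothesis, and conclude by bounded convergence and L\'evy continuity. The only cosmetic difference is that you rescale by $|s|^{-1}$ to handle arbitrary $s$, whereas the paper restricts to $s,t\in(-1,1)$ so that the test functions are already $1$-Lipschitz; both are valid.
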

\begin{proof} Note that it is enough to show, 
$$\E[e^{isX_n+itY_n}]\to e^{-\frac{s^2+t^2}{2}} , $$
for all $s,\,t\in(-1,\,1).$ Notice that $Y_n$ is $\CF_n$-measurable. This implies,
\begin{align}
    \E[e^{isX_n+itY_n}]&=\E[e^{itY_n}\E[e^{isX_n}\mid\CF_n]]\nonumber\\
    &=\E[e^{itY_n}(\E[e^{isX_n}\mid\CF_n]-e^{-\frac{s^2}{2}})]+e^{-\frac{s^2}{2}}\E[e^{itY_n}] .
    \label{eq:chf}
\end{align}
Since $Y_n \dto N(0,\,1),$ we have $\E[e^{itY_n}]\to e^{-\frac{t^2}{2}}$, for all $t\in(-1,\,1).$ Thus, it is enough to show that the first term in \eqref{eq:chf} converges to 0. To this end, note that,
\begin{align}
    \left|\E[e^{itY_n}\left(\E[e^{isX_n}\mid\CF_n]-e^{-\frac{s^2}{2}}\right)\right|&\le\E\left[\left|e^{itY_n}\right|\left|\E[e^{isX_n}\mid\CF_n]-e^{-\frac{s^2}{2}}\right|\right]\nonumber\\
    &\le\E\left[\left|\E[e^{isX_n}\mid\CF_n]-\E[e^{isZ}]\right|\right].\label{eq:is_diff_bd}
\end{align}
To complete the proof note that,
\begin{align*}
    & \left|\E[e^{isX_n}\mid\CF_n]-\E[e^{isZ}]\right| \\ 
    &\le|\E[\cos(sX_n)\mid\CF_n]-\E[\cos(sZ)]|  + |\E[\sin(sX_n)\mid\CF_n]-\E[\sin(sZ)]| \\
    &\le2d_{\mathrm{Wass}}(X_n \mid \mathcal F_n,\,Z)\Pto0.
\end{align*}
The above convergence follows from Definition \ref{defn:cond_wass}, since $x\mapsto\cos(sx)$ and $x\mapsto\sin(sx)$ are both 1-Lipschitz functions for $|s|<1$. Also, as characteristic functions are uniformly bounded by 1, we have $L_1$ convergence, that is,
$$\E\left[\left|\E[e^{isX_n}\mid\CF_n]-\E[e^{isZ}]\right|\right]\to0.$$
This completes the proof by recalling \eqref{eq:chf} and \eqref{eq:is_diff_bd}.
\end{proof}

\end{document}